\theoremstyle{plain} 
\newtheorem{theorem}{Theorem}[section]  
\newtheorem{lemma}[theorem]{Lemma}
\newtheorem{corollary}[theorem]{Corollary}
\newtheorem{proposition}[theorem]{Proposition}
\newtheorem{sublemma}[theorem]{Sublemma}
\theoremstyle{definition} 
\newtheorem{definition}[theorem]{Definition}
\newtheorem{remark}[theorem]{Remark}
\newtheorem{example}[theorem]{Example}
\newtheorem{notation}[theorem]{Notation}
\newcommand{\const}{\mathrm{const}}
\newcommand{\norm}[1]{\left|\!\left|#1\right|\!\right|}
\newcommand{\ad}{\mathrm{ad}}
\newcommand{\supp}{\mathrm{supp}}
\newcommand{\vol}{\mathrm{vol}}
\newcommand{\Diam}{\mathrm{Diam}}
\newcommand{\moduli}{\mathcal{M}}
\newcommand{\widim}{\mathrm{Widim}}
\newcommand{\dist}{\mathrm{dist}}
\begin{document}

\title[Local mean dimension of ASD moduli spaces over the cylinder]
{Local mean dimension of ASD moduli spaces over the cylinder} 

\author[S. Matsuo and M. Tsukamoto]{Shinichiroh Matsuo and Masaki Tsukamoto}

\subjclass[2010]{58D27, 53C07}

\keywords{Yang-Mills gauge theory, mean dimension, non-degenerate ASD connection, gluing instantons}

\date{\today}

\maketitle

\begin{abstract}
We study an infinite dimensional ASD moduli space over the cylinder.
Our main result is the formula of its local mean dimension.
A key ingredient of the argument is the notion of non-degenerate ASD connections.
We develop its deformation theory and show that there exist sufficiently many non-degenerate ASD connections
by using the method of gluing infinitely many instantons.
\end{abstract}


\section{Introduction} \label{section: Introduction}
\subsection{Main result} \label{subsection: main result}

This paper is a continuation of \cite{Matsuo-Tsukamoto}.
(But readers don't need a knowledge of \cite{Matsuo-Tsukamoto}.)
We study a certain infinite dimensional ASD moduli space over the cylinder $\mathbb{R}\times S^3$.
The main motivation is to develop an infinite dimensional analogue of the pioneering work of 
Atiyah--Hitchin--Singer \cite{A-H-S}.
The paper \cite{A-H-S} is a starting point of the mathematical study of Yang--Mills gauge theory.
One of their main results \cite[Theorem 6.1]{A-H-S} is a calculation of the dimension of an ASD moduli space 
by using the Atiyah--Singer index theorem.
Their result can be stated as follows:
Let $A$ be an irreducible $SU(2)$ ASD connection over a compact anti-self-dual 4-manifold of positive scalar curvature.
Then the number of the 
parameters of its deformation is 
\[ 8(\text{instanton number of $A$}) - 3(1-b_1).\]
Here $b_1$ is the first Betti number of the underlying 4-manifold.
The ``instanton number'' means the second Chern number of the bundle which the connection $A$ belongs to, and 
it is equal to the Yang--Mills functional 
\[ \frac{1}{8\pi^2}\int |F_A|^2d\vol.\]
This dimension formula is the target of our work.
Our main result (Theorem \ref{thm: main theorem}) is an infinite dimensional analogue of the above formula.
Although there is still much work to be done, probably our theorem is the first satisfactory result in this direction.

Let $S^3:= \{x_1^2+x_2^2+x_3^2+x_4^2=1\}\subset \mathbb{R}^4$ be the unit 3-sphere with the 
Riemannian metric induced by the Euclidean metric on $\mathbb{R}^4$.
Let $X:=\mathbb{R}\times S^3$ be the cylinder with the product metric.
The reason why we consider $\mathbb{R}\times S^3$ is as follows:
In \cite[Theorem 6.1]{A-H-S} they needed the assumption that the underling 4-manifold is anti-self-dual and 
has positive scalar curvature.
These metrical conditions were used via a certain Weitzenb\"{o}ck formula.
In the present paper we also need to use the Weitzenb\"{o}ck formula several times.
The cylinder $\mathbb{R}\times S^3$ is one of the simplest non-compact 4-manifolds which are
anti-self-dual and has uniformly positive scalar curvature.
We need these metrical conditions.

Let $E:=X\times SU(2)$ be the product principal $SU(2)$ bundle.
(Every principal $SU(2)$ bundle on $X$ is isomorphic to the product bundle $E$.)
Let $A$ be a connection on $E$.
Its curvature
$F_A$ is a 2-form valued in the adjoint bundle $\ad E = X\times su(2)$.
So it gives a linear map:
\[ F_{A}(p):\Lambda^2(T_pX) \to su(2) \quad (\forall p\in X).\] 
Let $|F_{A}(p)|_{\mathrm{op}}$ be the \textit{operator norm} of this linear map, and 
let $\norm{F_A}_{\mathrm{op}}$ be the supremum of $|F_{A}(p)|_{\mathrm{op}}$ over $p\in X$.
The explicit formula is as follows:
Let $p\in X$, and let $(x_1,x_2,x_3,x_4)$ be the normal coordinate system of $X$ 
centered at $p$.
We suppose that the curvature $F_A$ is expressed by $F_A =\sum_{1\leq i<j\leq 4} F_{ij}dx_i\wedge dx_j$ around the point $p$.
Then the norm $|F_{A}(p)|_{\mathrm{op}}$ is equal to 
\[  \sup\left\{\left|\sum_{1\leq i<j\leq 4}a_{ij}F_{ij}(p)\right||\, a_{ij}\in \mathbb{R}, 
   \sum_{1\leq i<j\leq 4} a_{ij}^2=1 \right\}. \]
Here the Lie algebra $su(2) = \{X\in M_2(\mathbb{C})|\, X+X^*=0, \,\mathrm{tr} (X)=0\}$
is endowed with the inner product $\langle X,Y\rangle = -\mathrm{tr}(XY)$.
In this paper we also use the Euclidean norm $|F_{A}(p)|$ defined by 
\begin{equation} \label{eq: Euclidean norm on the curvature}
 |F_A(p)|^2 := \sum_{1\leq i < j\leq 4} |F_{ij}(p)|^2.
\end{equation}
For a subset $U\subset X$ we denote by $\norm{F_A}_{L^\infty(U)}$ the
essential supremum of $|F_A(p)|$ over $p\in U$.

For a non-negative number $d$ we define $\moduli_d$ as the space of the 
gauge equivalence classes of ASD connections $A$
on $E$ satisfying 
\[ \norm{F_A}_{\mathrm{op}} \leq d.\]
This space is endowed with the topology of $C^\infty$-convergence over compact subsets:
A sequence $[A_n]$ converges to $[A]$ in $\moduli_d$ if and only if 
there exists a sequence of gauge transformations $g_n:E\to E$ such that $g_n(A_n)$ converges to $A$ 
in $C^\infty$ over every compact subset.
From the Uhlenbeck compactness (\cite{Uhlenbeck, Wehrheim}), the space $\moduli_d$ is compact and metrizable.
The above condition $\norm{F_A}_{\mathrm{op}}\leq d$ is motivated by the notion of Brody curves 
(Brody \cite{Brody}) in Nevanlinna theory.
Note that the norm $\norm{F_A}_{\mathrm{op}}$ does not dominate the $L^2$-norm of $F_A$.
So the $L^2$-norm of the curvature of $[A]\in \moduli_d$ can be infinite.

The space
$\moduli_d$ becomes a dynamical system with respect to the following natural $\mathbb{R}$-action:
$\mathbb{R}$ acts on $X=\mathbb{R}\times S^3$ by $s(t,\theta) := (t+s,\theta)$.
This action is lifted to the action on $E=X\times SU(2)$ by 
$s(t,\theta,u) := (t+s,\theta,u)$.
The group $\mathbb{R}$ continuously acts on $\moduli_d$ by $s[A]:=[s^*(A)]$ where $s^*(A)$ is the pull-back of $A$ 
by $s:E\to E$.
The main subject of this paper is the study of the dynamical system $\moduli_d$.
Let's start with the following example:
\begin{example} \label{example: BPST instanton}
If $d<1$ then $\moduli_d$ is equal to the one-point space.
The only one element is the gauge equivalence class of the trivial flat connection.
This fact is proved in \cite{Tsukamoto sharp lower bound}.
(The threshold value $d=1$ is different from the value given in \cite{Tsukamoto sharp lower bound}.
This is because the norm on $su(2)$ in the present paper is different from the norm in \cite{Tsukamoto sharp lower bound}
by the multiple factor $\sqrt{2}$.)

If $d=1$ then the space $\moduli_1$ contains a non-trivial element:
We define an $SU(2)$ ASD connection $A$ over the Euclidean space $\mathbb{R}^4$ by 
(\textbf{BPST instanton} \cite{BPST}) 
\begin{equation*}
   \begin{split} 
   A(x) := \frac{1}{1+|x|^2} \Bigg\{ &\begin{pmatrix}\sqrt{-1} & 0 \\ 0 & -\sqrt{-1}\end{pmatrix}
    (x_1dx_2-x_2dx_1-x_3dx_4+x_4dx_3) \\
     &+ \begin{pmatrix} 0 & 1 \\ -1 & 0 \end{pmatrix} (x_1dx_3-x_3dx_1+x_2dx_4-x_4dx_2) \\
     &+ \begin{pmatrix} 0 & \sqrt{-1} \\ \sqrt{-1} & 0 \end{pmatrix} (x_1dx_4-x_4dx_1-x_2dx_3+x_3dx_2) \Bigg\}.
   \end{split}
\end{equation*}
Let $I$ be the pull-back of $A$ by the conformal map 
\[ \mathbb{R}\times S^3\to \mathbb{R}^4\setminus \{0\}, \quad (t,\theta)\mapsto e^t\theta.\]
Then $I$ is an ASD connection on $E =X\times SU(2)$ with 
\[ |F_{I}(t,\theta)|_{\mathrm{op}} = \frac{4}{(e^t+e^{-t})^2},\quad \norm{F_I}_{\mathrm{op}} = 1.\]
Hence $[I]$ is contained in $\moduli_1$.
Therefore $\moduli_1$ contains a flat connection and the $\mathbb{R}$-orbit of $[I]$.
The authors don't know whether it contains other elements or not.
\end{example}
Therefore $\moduli_d$ is trivial for $d<1$, and possibly a simple space for $d=1$.
On the other hand
we will see later that the space $\moduli_d$ is infinite dimensional for $d>1$
(Remark \ref{remark: moduli_d is infinite dimensional for d>1}).
Moreover its topological entropy (as a topological dynamical system) is also infinite.
So $\moduli_d$ $(d>1)$ is a very large dynamical system.
A good invariant for the study of this kind of huge dynamical systems is 
\textbf{mean dimension} introduced by Gromov \cite{Gromov}.
But our present technology is a little inadequate for the study of the mean dimension of $\moduli_d$.
So we study the \textbf{local mean dimension} of $\moduli_d$ instead of mean dimension.
Local mean dimension is a variant of mean dimension introduced by \cite{Matsuo-Tsukamoto}.
For each point $[A]\in \moduli_d$ we have the non-negative number $\dim_{[A]}(\moduli_d:\mathbb{R})$
called the local mean dimension of $\moduli_d$ at $[A]$.
We define $\dim_{loc}(\moduli_d:\mathbb{R})$ as the supremum of 
$\dim_{[A]}(\moduli_d:\mathbb{R})$ over $[A]\in \moduli_d$.
Mean dimension and local mean dimension are topological invariants of dynamical systems which 
count ``dimension averaged by a group action'' in certain ways.
We review their definitions 
in Section \ref{section: review of mean dimension and local mean dimension}.

Let $A$ be a connection on $E$. We define the \textbf{energy density} $\rho(A)$ by 
\[\rho(A) := \lim_{T\to +\infty}\left(\frac{1}{8\pi^2 T}\sup_{t\in \mathbb{R}}\int_{(t,t+T)\times S^3}|F_A|^2d\vol\right).\]
Here $|F_A|$ is the Euclidean norm defined in (\ref{eq: Euclidean norm on the curvature}).
This limit always exists because we have the natural sub-additivity:
\[ \sup_{t\in \mathbb{R}}\int_{(t,t+T_1+T_2)\times S^3}|F_A|^2d\vol \leq 
  \sup_{t\in \mathbb{R}}\int_{(t,t+T_1)\times S^3}|F_A|^2d\vol +
  \sup_{t\in \mathbb{R}}\int_{(t,t+T_2)\times S^3}|F_A|^2d\vol.\]
The energy density $\rho(A)$ was first introduced in \cite{Matsuo-Tsukamoto}.
$\rho(A)$ is zero for finite energy ASD connections.
So it becomes meaningful only for infinite energy ones.
$\rho(A)$ can be seen as an ``averaged'' instanton number of $A$.
We define $\rho(d)$ as the supremum of $\rho(A)$ over all $[A]\in \moduli_d$.
$\rho(d)$ is a non-decreasing function in $d$.
It is zero for $d<1$ (Example \ref{example: BPST instanton}).
We will see later that $\rho(d)$ is positive for $d>1$ (Remark \ref{remark: moduli_d is infinite dimensional for d>1})
and that it goes to infinity as $d\to \infty$
(Example \ref{example: periodic ASD connection}).

Let $\mathcal{D}\subset [0,+\infty)$ be the set of left-discontinuous points of $\rho(d)$:
\[ \mathcal{D} = \{d\in [0,+\infty)|\, \lim_{\varepsilon\to +0}\rho(d-\varepsilon) \neq \rho(d)\}.\]
Since $\rho$ is monotone, the set $\mathcal{D}$ is at most countable.
(Indeed we don't know whether it is empty or not.)
Our main result is the following theorem.
\begin{theorem} \label{thm: main theorem}
For any $d\in [0,+\infty)\setminus \mathcal{D}$
\[ \dim_{loc}(\moduli_d:\mathbb{R}) = 8\rho(d).\]
\end{theorem}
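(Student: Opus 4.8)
The plan is to prove the two inequalities $\dim_{loc}(\moduli_d:\mathbb{R})\le 8\rho(d)$ and $\dim_{loc}(\moduli_d:\mathbb{R})\ge 8\rho(d)$ separately; only the second will use the hypothesis $d\notin\mathcal{D}$. In both directions the local mean dimension at $[A]$ is controlled by the local structure of $\moduli_d$ near the $\mathbb{R}$-orbit of $[A]$, which is governed by the elliptic deformation complex of the ASD equation, i.e.\ by the operator $d_A^{*}\oplus d_A^{+}\colon \Omega^1(\ad E)\to \Omega^0(\ad E)\oplus\Omega^{+}(\ad E)$ and its kernel and cokernel. So the first step is to set up a deformation theory adapted to the non-compact cylinder and to \emph{infinite-energy} connections, replacing the usual Fredholm theory by estimates that are uniform per unit length in the $\mathbb{R}$-direction, with the Weitzenb\"{o}ck formula and the uniformly positive scalar curvature of $\mathbb{R}\times S^3$ providing the coercivity needed to kill the $\Omega^0$ and $\Omega^{+}$ parts.

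For the upper bound I would show that $\dim_{[A]}(\moduli_d:\mathbb{R})\le 8\rho(A)$ for every $[A]\in\moduli_d$; taking the supremum over $[A]$ and using the definition of $\rho(d)$ then finishes this direction (note this half needs no assumption on $d$). The key point is an ``averaged index'' computation: the space of infinitesimal deformations $\mathcal{H}^1_A=\ker(d_A^{*}\oplus d_A^{+})$, in the appropriate $L^\infty$-type function space, has a finite density with respect to the $\mathbb{R}$-action equal to $8\rho(A)$. This is the infinite-energy analogue of the Atiyah--Hitchin--Singer formula $8c_2(A)-3(1-b_1)$, obtained by combining the Atiyah--Singer index theorem with an excision argument localizing the instanton number; the term $-3(1-b_1)$ is negligible after averaging over $\mathbb{R}$ because $b_1(S^3)=0$ and its contribution stays bounded in time. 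One then feeds this into the standard mechanism that bounds mean dimension by the averaged dimension of the deformation space: a local slice/parametrix for $\moduli_d$ near $[A]$, together with the $\widim_\varepsilon$ estimates for the relevant approximating maps, yields $\dim_{[A]}(\moduli_d:\mathbb{R})\le 8\rho(A)$.

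The lower bound is the heart of the matter, and it is where \emph{non-degenerate} ASD connections enter: these are ASD connections $A$ whose deformation theory is unobstructed (the relevant cokernel vanishes, uniformly along the cylinder), so that near $[A]$ the moduli space $\moduli_{d'}$, for $d'$ slightly larger than $\norm{F_A}_{\mathrm{op}}$, is genuinely modeled in an $\mathbb{R}$-equivariant way on an open set of $\mathcal{H}^1_A$. For such $A$ the matching lower bound $\dim_{[A]}(\moduli_{d'}:\mathbb{R})\ge 8\rho(A)$ follows because an infinite-dimensional shift space realizes its density as mean dimension, and an $\mathbb{R}$-equivariant embedding of a large cube of $\mathcal{H}^1_A$ into the moduli space via a quantitative inverse function theorem transports this estimate to $\moduli_{d'}$. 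It then remains to produce, for every $\delta>0$, a non-degenerate ASD connection $A$ with $\norm{F_A}_{\mathrm{op}}\le d$ and $\rho(A)\ge\rho(d)-\delta$: one glues infinitely many suitably rescaled and translated copies of the BPST instanton along the cylinder so as to approach the supremal energy density while keeping the pointwise curvature bound, and a uniform implicit function theorem converts the glued approximate solution into an honest non-degenerate ASD connection. Since the gluing only controls the curvature sup-norm by $d-\varepsilon$ for the $\varepsilon$ one can afford, this directly gives $[A]\in\moduli_{d-\varepsilon}$ with $\rho(A)$ close to $\rho(d-\varepsilon)$; letting $\varepsilon\to+0$ and invoking the left-continuity of $\rho$ at $d$, i.e.\ the assumption $d\notin\mathcal{D}$, upgrades this to $\dim_{loc}(\moduli_d:\mathbb{R})\ge 8\rho(d)$.

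I expect the main obstacle to be precisely this lower bound: making the deformation theory of non-degenerate connections rigorous in the infinite-energy, non-compact setting, where there is no genuine Fredholm operator, so the parametrix, the uniform vanishing of the obstruction space, and the $\mathbb{R}$-equivariant chart on the moduli space all have to be constructed from scratch using weighted or localized norms, and the gluing of infinitely many instantons must be controlled so that the perturbation errors do not accumulate along the cylinder. A secondary difficulty is the bookkeeping that pins the averaged index to exactly $8\rho(A)$ and matches it with the mean-dimension lower bound constant; computing the density of $\mathcal{H}^1_A$ on the nose is what makes the non-degeneracy hypothesis indispensable.
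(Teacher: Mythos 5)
Your high-level outline (upper bound per point, lower bound via a deformation theory around ``good'' connections plus a gluing construction showing such connections exhaust the energy density, finished by left-continuity of $\rho$) does match the paper's strategy, which runs the lower bound through Theorems \ref{thm: local mean dimension around non-degenerate ASD connection} and \ref{thm: abundance of non-degenerate ASD connections} and an auxiliary quantity $\rho_0(d)=\sup\{\rho(A): \norm{F_A}_{\mathrm{op}}<d\}$, then uses $d\notin\mathcal{D}$ to identify $\rho_0(d)$ with $\rho(d)$.

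However, there is a genuine conceptual mistake in how you understand and deploy the non-degeneracy hypothesis, and it matters. You define non-degenerate connections as those ``whose deformation theory is unobstructed (the relevant cokernel vanishes, uniformly along the cylinder),'' and you say that ``computing the density of $\mathcal{H}^1_A$ on the nose is what makes the non-degeneracy hypothesis indispensable.'' Neither is what the paper does, and the first is false in this setting: on $X=\mathbb{R}\times S^3$ the Weitzenb\"{o}ck formula $d_A^+d_A^*=\tfrac12(\nabla_A^*\nabla_A+S/3)$ with $S>0$ constant makes $(\partial_2\Phi)_0$ an isomorphism on the $\ell^\infty L^2_k$ spaces for \emph{every} ASD connection of bounded curvature (Lemma \ref{lemma: deformation is unobstructed}), so unobstructedness is automatic here and has nothing to do with non-degeneracy. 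In the paper, ``non-degenerate'' means that the closure of the $\mathbb{R}$-orbit of $[A]$ in $\moduli_d$ contains no flat connection (Definition in \S\ref{subsection: non-degenerate ASD connections}); equivalently, $\|F_A\|_{L^\infty}$ is bounded below on every window of fixed length (Lemma \ref{lemma: equivalent condition of non-degeneracy}). The role this plays is entirely about \emph{gauge fixing}: it yields the coercive inequality $\norm{u}_{L^2}\lesssim_A\norm{d_A u}_{L^2}$ on unit slabs (Lemma \ref{lemma: basic estimates from non-degeneracy}), which is the quantitative, translation-uniform form of irreducibility. That coercivity is what makes the Coulomb-gauge estimate (Proposition \ref{prop: Coulomb gauge}) and its corollary (Lemma \ref{lemma: distortion of the deformation map}) work, i.e.\ what lets one compare the $L^\infty$-metric on $B_R(H^1_A)$ with the metric $\dist_{(\alpha,\beta)}$ on $\moduli_d$ after quotienting by gauge transformations of arbitrary behavior at the two ends. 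Without that, the deformation map could collapse distances arbitrarily after passing to the quotient, and the $\widim$ lower bound in Proposition \ref{prop: widim estimate for B_r([A])_R} would fail. Non-degeneracy also enters in Lemma \ref{lemma: killing the error term}(i) (producing $V\subset H^1_A$ with the localization property), again through that same coercivity, not through any index or cokernel consideration. So if you try to carry out your lower-bound argument with ``non-degenerate $=$ unobstructed,'' the argument does not close: you never obtain the gauge-fixed distance comparison that is the actual content of the non-degeneracy assumption, and the claimed $\mathbb{R}$-equivariant chart on $\moduli_d$ does not exist (the deformation map is not $\mathbb{R}$-equivariant, since $H^1_A$ is not translation-invariant; the paper instead works directly with $\widim_\varepsilon$ on windows $(\alpha,\beta)$ and the subspaces $V$ of Proposition \ref{prop: main result of the study of H^1_A}).
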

Since $\mathcal{D}$ is at most countable,
we get the formula of the local mean dimension of $\moduli_d$ for almost every $d\geq 0$.
\begin{remark}
Some readers might feel that the operator norm $\norm{F_A}_{\mathrm{op}}$ used in the definition of $\moduli_d$
seems strange.
Indeed this choice leads us to a very satisfactory result.
But we will briefly discuss another possibility in Appendix.
\end{remark}

\subsection{Non-degenerate ASD connections} \label{subsection: non-degenerate ASD connections}

The following notion is very important in the argument of the paper:
\begin{definition}
Let $[A]\in \moduli_d$ $(d\geq 0)$.
$A$ is said to be \textbf{non-degenerate} if the closure of the $\mathbb{R}$-orbit of $[A]$ in $\moduli_d$ does not contain 
a gauge equivalence class of a flat connection.
\end{definition}
This definition is motivated by the classical work of Yosida \cite{Yosida}
in complex analysis.
Yosida studied a similar non-degeneracy condition for meromorphic functions 
$f:\mathbb{C}\to \mathbb{C}P^1$. (He used the terminology ``meromorphic functions of 
first category''.)
Eremenko \cite[Section 4]{Eremenko} discussed it for holomorphic curves $f:\mathbb{C}\to \mathbb{C}P^N$,
and Gromov \cite[p. 399]{Gromov} studied a similar condition for more general holomorphic maps.

\begin{example}
Let $A$ be an \textbf{instanton}, i.e. an ASD connection on $E$ with finite energy
\[ \int_{X} |F_A|^2d\vol < +\infty.\]
Then $s[A]$ converges to a gauge equivalence class of a flat connection when $s\to \pm \infty$
(Donaldson \cite[Chapter 4, Proposition 4.3]{Donaldson}).
So $A$ is a degenerate (i.e. not non-degenerate) ASD connection.
\end{example}
\begin{example} \label{example: periodic ASD connection}
An ASD connection 
$A$ on $E$ is said to be \textbf{periodic} (\cite{Matsuo-Tsukamoto}) if there exist $T>0$, 
a principal $SU(2)$ bundle $F$ over $(\mathbb{R}/T\mathbb{Z})\times S^3$
and an ASD connection $B$ on $F$ such that $(E,A)$ is isomorphic to the pull-back $(\pi^*(F),\pi^*(B))$.
Here $\pi: \mathbb{R}\times S^3 \to (\mathbb{R}/T\mathbb{Z})\times S^3$ is the natural projection.
If $A$ is periodic, then the energy density $\rho(A)$ is given by 
\[ \rho(A) = c_2(F)/T.\]
If $A$ is periodic and non-flat, then $A$ is non-degenerate.
By Taubes \cite{Taubes}, every principal $SU(2)$ bundle $F$ on $(\mathbb{R}/T\mathbb{Z})\times S^3$ with 
$c_2(F)\geq 0$ admits an ASD connection.
Therefore we have a lot of periodic ASD connections.
From this fact we can easily see that the function $\rho(d)$ introduced in the previous subsection goes to 
infinity as $d\to \infty$.
\end{example}
\begin{lemma} \label{lemma: equivalent condition of non-degeneracy}
Let $[A]\in \moduli_d$.
$A$ is non-degenerate if and only if there exist $\delta>0$ and $T>0$ such that 
for any interval $(\alpha,\beta)\subset \mathbb{R}$
of length $T$ we have 
\begin{equation} \label{eq: non-degeneracy}
  \norm{F_A}_{L^\infty((\alpha,\beta)\times S^3)} \geq \delta.
\end{equation}
\end{lemma}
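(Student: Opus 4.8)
The plan is to prove the two implications separately; in both directions the argument is a short compactness/contradiction argument resting on two invariance facts. First, the pointwise curvature norm $|F_A(p)|$ is gauge invariant, because the inner product $\langle X,Y\rangle=-\mathrm{tr}(XY)$ on $su(2)$ is $\mathrm{Ad}$-invariant. Second, the $\mathbb{R}$-action on $X$ is by isometries, so that for every interval $U\subset\mathbb{R}$ and every $s\in\mathbb{R}$
\[ \norm{F_{s^*A}}_{L^\infty(U\times S^3)}=\norm{F_A}_{L^\infty((U+s)\times S^3)}, \]
and likewise $\norm{F_{s^*A}}_{\mathrm{op}}=\norm{F_A}_{\mathrm{op}}$, so $\moduli_d$ is an $\mathbb{R}$-invariant space; it is also compact by Uhlenbeck compactness. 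The underlying dynamical idea is that an orbit whose closure avoids the flat locus must stay uniformly bounded away from flatness on $\mathbb{R}$-windows of one fixed length.

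For the ``if'' direction I would argue by contradiction: assume $\delta,T$ as in the statement exist but $A$ is degenerate, so the closure of the $\mathbb{R}$-orbit of $[A]$ contains the class of a flat connection $A_0$. Choose $s_n\in\mathbb{R}$ with $s_n[A]\to[A_0]$ in $\moduli_d$; by definition of the topology there are gauge transformations $g_n$ with $g_n(s_n^*A)\to A_0$ in $C^\infty$ on compact subsets, hence $F_{g_n(s_n^*A)}\to F_{A_0}=0$ uniformly on $[0,T]\times S^3$. But gauge invariance and the isometry identity give $\norm{F_{g_n(s_n^*A)}}_{L^\infty((0,T)\times S^3)}=\norm{F_A}_{L^\infty((s_n,s_n+T)\times S^3)}\ge\delta$ by (\ref{eq: non-degeneracy}) applied to the length-$T$ interval $(s_n,s_n+T)$, contradicting the convergence to $0$.

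For the ``only if'' direction, suppose no such pair $(\delta,T)$ exists. Negating the condition with $\delta=1/n$ and $T=n$ yields, for each $n$, an interval $(\alpha_n,\beta_n)$ of length $n$ with $\norm{F_A}_{L^\infty((\alpha_n,\beta_n)\times S^3)}<1/n$; letting $s_n$ be its midpoint, the translate satisfies $\norm{F_{s_n^*A}}_{L^\infty((-n/2,\,n/2)\times S^3)}<1/n$. Since $\moduli_d$ is $\mathbb{R}$-invariant and compact, a subsequence of $s_n[A]=[s_n^*A]$ converges to some $[A_\infty]\in\moduli_d$, which by construction lies in the closure of the $\mathbb{R}$-orbit of $[A]$. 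Picking gauge transformations $g_n$ with $g_n(s_n^*A)\to A_\infty$ in $C^\infty$ on compacts and using gauge invariance again, for each fixed $R>0$ and all large $n$ we get $\norm{F_{A_\infty}}_{L^\infty([-R,R]\times S^3)}=\lim_n\norm{F_{s_n^*A}}_{L^\infty([-R,R]\times S^3)}\le\lim_n 1/n=0$; as $R$ is arbitrary, $A_\infty$ is flat, so $A$ is degenerate --- the desired contradiction.

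I do not expect a genuine obstacle: the whole argument is elementary once the right invariances are isolated. The only points that need care are the bookkeeping with the gauge transformations $g_n$ (which vanish at the level of $|F_\bullet|$, so they cause no trouble) and the use of compactness of $\moduli_d$ in the ``only if'' direction, which is precisely what ensures that the translates $s_n[A]$ accumulate inside the closure of the orbit rather than running off to infinity in $\moduli_d$.
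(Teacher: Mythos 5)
Your proof is correct. In the ``if'' direction you are doing essentially what the paper does, just with the gauge-invariance and translation-isometry bookkeeping made explicit, so there is nothing to add there.

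In the ``only if'' direction you take a genuinely different, and slightly more elementary, route. The paper fixes $T=1$, negates \eqref{eq: non-degeneracy} to get length-$1$ intervals $(\alpha_n,\alpha_n+1)$ with $\norm{F_A}_{L^\infty}<1/n$, passes to a subsequential limit $[B]$, concludes $F_B\equiv 0$ only on $(0,1)\times S^3$, and then invokes the \emph{unique continuation theorem} to upgrade this to $F_B\equiv 0$ on all of $X$. You instead exploit the full negation of the existential statement, taking $T=n$ and $\delta=1/n$ simultaneously, so the translated connections have curvature tending to zero on $(-n/2,n/2)\times S^3$; the limit $[A_\infty]$ is then flat on all of $X$ directly, with no appeal to unique continuation. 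Your version buys simplicity: it only needs Uhlenbeck compactness and the elementary invariances you isolate. What the paper's version buys is a slightly stronger conclusion, used in the remark at the end of its proof: failure of \eqref{eq: non-degeneracy} for \emph{one fixed} $T$ already forces degeneracy, which is what yields the refined equivalence ``$[A]$ non-degenerate $\iff$ for \emph{every} $T>0$ there exists $\delta>0$ such that every length-$T$ interval sees curvature $\geq\delta$'' (and this refined form is the one actually used, via Remark \ref{remark: energy density of non-degenerate ASD connection} and \eqref{eq: nondegeneracy of B}, in Section \ref{section: gluing infinitely many instantons}). If you want your argument to recover that sharper statement you would either need to reinstate unique continuation or rerun your argument with $T_n=T_0$ fixed and $\delta_n=1/n$, at which point you would again be in the paper's situation of knowing $F_{A_\infty}=0$ only on a bounded strip.
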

\begin{proof}
This is a Yang--Mills analogue of the result of Yosida \cite[Theorem 4]{Yosida}.
Suppose that $A$ does not satisfy (\ref{eq: non-degeneracy}) for $T=1$.
Then there exist $\{\alpha_n\}_{n\geq 1} \subset \mathbb{R}$ such that 
$\norm{F_A}_{L^\infty((\alpha_n,\alpha_n+1)\times S^3)} < 1/n$.
By choosing a subsequence we can assume that $\alpha_n[A]$ converges to $[B]$ in $\moduli_d$.
Then $F_B=0$ over $(0,1)\times S^3$.
By the unique continuation, $F_B=0$ all over $X$.
Hence $B$ is flat and $A$ is degenerate.

Suppose the above condition (\ref{eq: non-degeneracy}) holds for some $\delta>0$ and $T>0$.
Then any element $[B]$ in the closure of the $\mathbb{R}$-orbit of $[A]$ satisfies 
$\norm{F_B}_{L^\infty((\alpha,\beta)\times S^3)}\geq \delta$ for every interval $(\alpha,\beta)\subset \mathbb{R}$
of length $T$. Hence $B$ is not flat.

Note that the above argument also proves the following:
$[A]$ is non-degenerate if and only if for any $T>0$ there exists $\delta>0$ such that for 
any interval $(\alpha,\beta)\subset \mathbb{R}$ of length $T$ we have 
$\norm{F_A}_{L^\infty((\alpha,\beta)\times S^3)}\geq \delta$.
\end{proof}
\begin{remark} \label{remark: energy density of non-degenerate ASD connection}
By the same argument we can prove the following:
$[A]\in \moduli_d$ is non-degenerate if and only if there exist $\delta>0$ and $T>0$ such that 
for any interval $(\alpha,\beta)\subset \mathbb{R}$ of length $T$ we have 
\[ \norm{F_A}_{L^2((\alpha,\beta)\times S^3)}\geq \delta.\]
In particular if $[A]\in \moduli_d$ is non-degenerate then its energy density $\rho(A)$ is positive.
\end{remark}

The following Theorem \ref{thm: upper bound} is proved in \cite[Theorem 1.2]{Matsuo-Tsukamoto}.
(The paper \cite{Matsuo-Tsukamoto} adopts a little different setting.
So we explain how to deduce this result from \cite{Matsuo-Tsukamoto} in Appendix.)
\begin{theorem} \label{thm: upper bound}
For any $[A]\in \moduli_d$, 
\[ \dim_{[A]}(\moduli_d:\mathbb{R}) \leq 8\rho(A).\]
Hence 
\[ \dim_{loc}(\moduli_d:\mathbb{R}) = \sup_{[A]\in \moduli_d}\dim_{[A]}(\moduli_d:\mathbb{R}) \leq 8\rho(d).\]
\end{theorem}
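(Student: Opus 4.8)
The plan is to deduce the bound from \cite[Theorem 1.2]{Matsuo-Tsukamoto}; as announced, the only genuinely new point is the translation between the moduli space used there and the present $\moduli_d$ (which is what the Appendix does), while the analytic heart is a per-unit-time dimension estimate on long finite cylinders that I now outline. The second inequality in the statement is formal: $\dim_{loc}$ is by definition $\sup_{[A]\in\moduli_d}\dim_{[A]}(\moduli_d:\mathbb{R})$ and $\rho(d)=\sup_{[A]\in\moduli_d}\rho(A)$, so $\sup_{[A]}8\rho(A)=8\rho(d)$. For the first inequality, by the definition of local mean dimension (Section \ref{section: review of mean dimension and local mean dimension}) it suffices, for each $\eta>0$, to bound the mean dimension of the maximal $\mathbb{R}$-invariant subset $\moduli_d(\mathcal{U})$ of a sufficiently small neighborhood $\mathcal{U}$ of the orbit closure $\mathcal{O}:=\overline{\mathbb{R}\cdot[A]}$ by $8(\rho(A)+\eta)$, and then let $\mathcal{U}\downarrow\mathcal{O}$ and $\eta\to 0$. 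Since $\mathrm{mdim}(\moduli_d(\mathcal{U}):\mathbb{R})\leq\lim_{\varepsilon\to 0}\liminf_{T\to\infty}\frac{1}{2T}\widim_\varepsilon(\moduli_d(\mathcal{U}),\dist_{2T})$, where $\dist_{2T}$ measures the $C^0$-distance between gauge classes of connections restricted to $Z_T:=[-T,T]\times S^3$, the task is to bound this last quantity by $8(\rho(A)+\eta)$.

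Step 1 (energy localization). By the sub-additivity of $T\mapsto\sup_t\int_{(t,t+T)\times S^3}|F_A|^2d\vol$ displayed above and Fekete's lemma, $\rho(A)=\inf_{L>0}\frac{1}{8\pi^2 L}\sup_t\int_{(t,t+L)\times S^3}|F_A|^2d\vol$. Fix $L$ with $\sup_t\int_{(t,t+L)\times S^3}|F_A|^2d\vol\leq 8\pi^2(\rho(A)+\eta)L$. For $[C]\in\mathcal{O}$, writing $[C]=\lim_n s_n[A]$ and using that $|F_\bullet|$ converges in $C^0$ on $\overline{(0,L)}\times S^3$ under $C^\infty$-convergence on compacta, we get $\int_{(0,L)\times S^3}|F_C|^2d\vol\leq 8\pi^2(\rho(A)+\eta)L$; since $[B]\mapsto\int_{(0,L)\times S^3}|F_B|^2d\vol$ is continuous on $\moduli_d$ and $\mathcal{O}$ is compact, there is a neighborhood $\mathcal{U}$ of $\mathcal{O}$ on which $\int_{(0,L)\times S^3}|F_B|^2d\vol\leq 8\pi^2(\rho(A)+2\eta)L$. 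For $[B]\in\moduli_d(\mathcal{U})$, applying this to every translate $s[B]$ and summing over the $\lfloor 2T/L\rfloor+1$ length-$L$ blocks covering $[-T,T]$ gives $\int_{Z_T}|F_B|^2d\vol\leq 8\pi^2(\rho(A)+3\eta)\cdot 2T$ once $T$ is large; in particular the instanton number $k_T(B):=\frac{1}{8\pi^2}\int_{Z_T}|F_B|^2d\vol$ of $B|_{Z_T}$ satisfies $k_T(B)\leq(\rho(A)+3\eta)\cdot 2T$.

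Step 2 (finite-dimensional model over $Z_T$). The bound $\norm{F_B}_{\mathrm{op}}\leq d$ yields $|F_B|\leq\sqrt{6}\,d$ pointwise, so by Uhlenbeck's gauge-fixing and elliptic regularity (\cite{Uhlenbeck, Wehrheim}) the gauge classes $B|_{Z_T}$ form a precompact family with uniform interior $C^\infty$-bounds and no bubbling occurs. Near any such ASD connection the moduli problem on $Z_T$ is governed by the deformation operator $\mathcal{D}_B:=d_B^*\oplus d_B^+$ on $\ad E$-valued forms, which, with Atiyah--Patodi--Singer boundary conditions at the two copies of $S^3$, is Fredholm; by the index theorem its index equals $8k_T(B)$ plus boundary and $\eta$-invariant corrections depending only on $S^3$ and the limiting flat connections (finitely many, by non-concentration), hence bounded by a constant $C_0$ independent of $T$, so $\mathrm{ind}\,\mathcal{D}_B\leq 8(\rho(A)+3\eta)\cdot 2T+C_0$. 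The local slice theorem presents a neighborhood of $B|_{Z_T}$ in $\{B'|_{Z_T}\}$ as a subset of a ball in $\ker\mathcal{D}_B$, and here the metrical hypotheses are essential: the uniform positive scalar curvature of $X=\mathbb{R}\times S^3$ gives a Weitzenb\"ock formula forcing solutions of $\mathcal{D}_Bu=0$ to decay exponentially along the cylinder away from the regions where $F_B$ concentrates, and since the number and size of those regions is controlled by $k_T(B)$ one obtains $\dim\ker\mathcal{D}_B\leq 8k_T(B)+C_1$ with $C_1$ independent of $T$. Patching the local models yields a continuous $\varepsilon$-embedding of $\{B|_{Z_T}:[B]\in\moduli_d(\mathcal{U})\}$ (mod gauge) into $\mathbb{R}^{N_T}$ with $N_T\leq 8(\rho(A)+3\eta)\cdot 2T+C_2$.

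Step 3 (conclusion) and the main obstacle. A set admitting a continuous $\varepsilon$-embedding into $\mathbb{R}^{N}$ has $\widim_\varepsilon\leq N$, so $\widim_\varepsilon(\moduli_d(\mathcal{U}),\dist_{2T})\leq N_T$, whence $\mathrm{mdim}(\moduli_d(\mathcal{U}):\mathbb{R})\leq\liminf_{T\to\infty}N_T/(2T)\leq 8(\rho(A)+3\eta)$. Letting $\mathcal{U}\downarrow\mathcal{O}$ and $\eta\to 0$ gives $\dim_{[A]}(\moduli_d:\mathbb{R})\leq 8\rho(A)$, and taking the supremum over $[A]\in\moduli_d$ gives $\dim_{loc}(\moduli_d:\mathbb{R})\leq 8\rho(d)$. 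The hard part is Step 2: forcing the deformation-theoretic count to be sharp — constant exactly $8$, error genuinely $O(1)$ in $T$ — in a setting where the obstruction space $H^2$ need not vanish and where all estimates must be uniform over the non-compact family $\moduli_d(\mathcal{U})$. This is precisely what \cite[Theorem 1.2]{Matsuo-Tsukamoto} establishes, its proof combining the Weitzenb\"ock spectral estimate on $S^3$ with the non-concentration supplied by the operator-norm bound.
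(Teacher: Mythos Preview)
Your plan is correct and matches the paper: both reduce Theorem~\ref{thm: upper bound} to \cite[Theorem 1.2]{Matsuo-Tsukamoto}. But your emphasis is inverted relative to what this paper actually does. The paper's entire argument (carried out in the Appendix) is the translation step you dispatch in one clause: for ASD connections the pointwise inequality $\frac{1}{\sqrt{3}}|F_A|\leq |F_A|_{\mathrm{op}}\leq |F_A|$ yields the inclusion $\moduli_d\subset\mathcal{N}_{\sqrt{3}\,d}$, where $\mathcal{N}_c$ is the moduli space of \cite{Matsuo-Tsukamoto} defined by $\norm{F_A}_{L^\infty}\leq c$, and hence $\dim_{[A]}(\moduli_d:\mathbb{R})\leq\dim_{[A]}(\mathcal{N}_{\sqrt{3}\,d}:\mathbb{R})\leq 8\rho(A)$ by the cited theorem. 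That one line is the whole proof here.

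Your Steps 1--3, by contrast, are a sketch of what \cite{Matsuo-Tsukamoto} itself proves, which this paper simply quotes as a black box. As a sketch it is broadly reasonable, but it is not a proof: you yourself flag Step~2 as the hard point and then defer it back to the citation, so the outline adds narrative but no logical content beyond the reference. Two small slips worth noting: the pointwise bound for ASD curvature is $|F_B|\leq\sqrt{3}\,d$, not $\sqrt{6}\,d$ (the curvature lives in the three-dimensional $\Lambda^-$); and your reformulation of local mean dimension via the maximal $\mathbb{R}$-invariant subset of a neighborhood of the orbit closure $\overline{\mathbb{R}\cdot[A]}$ is not the definition given in Section~\ref{section: review of mean dimension and local mean dimension}, which uses the ball $B_r([A])_{\mathbb{R}}$ for the distance $\dist_{\mathbb{R}}$ --- a set that is neither $\mathbb{R}$-invariant nor a neighborhood of the orbit closure in general --- so that reformulation would itself need justification.
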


The lower bound on the local mean dimension is given by using the next two theorems.
\begin{theorem} \label{thm: local mean dimension around non-degenerate ASD connection}
Let $A$ be a non-degenerate ASD connection on $E$ with $\norm{F_A}_{\mathrm{op}} < d$.
Then 
\[ \dim_{[A]}(\moduli_d:\mathbb{R}) = 8\rho(A).\]
\end{theorem}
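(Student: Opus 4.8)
The plan is to combine Theorem \ref{thm: upper bound}, which already gives $\dim_{[A]}(\moduli_d:\mathbb{R})\le 8\rho(A)$, with the reverse inequality
\[ \dim_{[A]}(\moduli_d:\mathbb{R})\ge 8\rho(A). \]
For the lower bound I would follow the standard scheme for mean dimension: for every large $T$ construct a continuous family of ASD connections contained in $\moduli_d$ and lying in a prescribed small dynamical neighbourhood of the $\mathbb{R}$-orbit of $A$, parametrized by a cube of dimension at least $(8\rho(A)-o(1))T$, and show that this family cannot be compressed, i.e. that it forces the $\varepsilon$-width of that dynamical neighbourhood in the time-$T$ metric to be at least $(8\rho(A)-o(1))T$ with $\varepsilon$ independent of $T$. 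Feeding this into the definition of local mean dimension recalled in Section \ref{section: review of mean dimension and local mean dimension}, then letting $T\to\infty$ and finally $\varepsilon\to 0$, gives the claim.

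The family is produced by the deformation theory of $A$. Writing a nearby connection as $A+a$ with $a\in\Omega^1(\ad E)$, in Coulomb gauge relative to $A$ it is ASD exactly when $\mathcal{D}_A a:=(d_A^+a,d_A^*a)$ solves $\mathcal{D}_A a=(-(a\wedge a)^+,0)$, where $\mathcal{D}_A=d_A^+\oplus d_A^*\colon\Omega^1(\ad E)\to\Omega^+(\ad E)\oplus\Omega^0(\ad E)$ is the rolled-up ASD deformation operator. The essential analytic input is a \emph{length-uniform} surjectivity estimate for $\mathcal{D}_A$ on long cylinders: using the Weitzenb\"ock formula on $\mathbb{R}\times S^3$ (this is where the anti-self-duality and the uniformly positive scalar curvature enter, just as in \cite[Theorem 6.1]{A-H-S}) together with the non-degeneracy of $A$, one wants to produce, on a suitable scale of weighted Sobolev spaces, a right inverse of $\mathcal{D}_A$ whose norm is bounded independently of the length of the cylinder. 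Non-degeneracy is exactly what upgrades the pointwise vanishing of the cokernel of $d_A^+$ to a length-uniform spectral gap that does not deteriorate as one moves along the orbit; it also forces $\rho(A)>0$ (Remark \ref{remark: energy density of non-degenerate ASD connection}), so that the family below is non-vacuous.

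Next I would localize and count. Given large $T$, choose a window $W=(\alpha,\alpha+T)\times S^3$ on which $\int_W|F_A|^2\,d\vol$ is within $o(T)$ of $8\pi^2\rho(A)T$; this is possible by the definition of $\rho(A)$. On $W$ the relevant index of $\mathcal{D}_A$ equals $8\cdot\frac{1}{8\pi^2}\int_W|F_A|^2\,d\vol$ up to an $O(1)$ correction from the two ends of the window (the analogue of the $-3(1-b_1)$ term of Atiyah--Hitchin--Singer, together with $\eta$-type boundary contributions), so, the cokernel being zero by the previous step, the space of decaying solutions of $\mathcal{D}_A a=0$ has dimension at least $(8\rho(A)-o(1))T$. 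Multiplying a basis of such solutions by a fixed cutoff supported near $W$ gives a linear space $V$ of approximate solutions of the same dimension, each element exponentially small outside $W$. For $v\in V$ of small norm, solve the nonlinear ASD equation by a Newton iteration based on the uniform right inverse, obtaining a small correction $\xi(v)$, again concentrated near $W$, with $A+v+\xi(v)$ genuinely ASD in Coulomb gauge; since $\norm{F_A}_{\mathrm{op}}<d$ strictly and $v+\xi(v)$ is $C^0$-small, the bound $\norm{F_{A+v+\xi(v)}}_{\mathrm{op}}\le d$ survives, so $\Phi\colon v\mapsto[A+v+\xi(v)]$ is a well-defined continuous map from a ball of $V$ into $\moduli_d$, landing in a small dynamical neighbourhood of the orbit of $[A]$. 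This is the only place the strict inequality in the hypothesis is used.

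Finally one checks that $\Phi$, restricted to a small cube in $V$, is $\varepsilon$-essential in the time-$T$ metric for some $\varepsilon>0$ depending only on the radius of the cube, not on $T$: $\Phi$ is injective, and since $v-v'$ is supported in $W$ while the time-$T$ metric samples $\moduli_d$ over an interval that the flow sweeps across $W$, the time-$T$ distance between $\Phi(v)$ and $\Phi(v')$ is bounded below by $c\norm{v-v'}$ for a fixed $c>0$; hence the image, in the time-$T$ metric, contains a bi-Lipschitz copy of a Euclidean cube of dimension $\dim V$, whose $\varepsilon$-width equals $\dim V$ once $\varepsilon$ is small. By monotonicity of $\widim_\varepsilon$ under passing to subsets, the $\varepsilon$-width of the dynamical neighbourhood is at least $\dim V\ge(8\rho(A)-o(1))T$; dividing by $T$, letting $T\to\infty$ and then $\varepsilon\to0$, we get $\dim_{[A]}(\moduli_d:\mathbb{R})\ge 8\rho(A)$, which with Theorem \ref{thm: upper bound} proves the theorem. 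The two places I expect to require genuine work are the length-uniform surjectivity estimate for $\mathcal{D}_A$ (the choice of weighted spaces and the extraction of the spectral gap from non-degeneracy via Weitzenb\"ock) and making the index count and the Newton iteration quantitative enough that all error terms are truly $O(1)$ and disappear after dividing by $T$.
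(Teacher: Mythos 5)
Your overall plan is the same as the paper's: upper bound from Theorem \ref{thm: upper bound}, and for the lower bound a deformation map built from an approximate kernel concentrated on a long window $W=(\alpha,\alpha+T)\times S^3$, with the dimension counted by the index theorem and corrected by a contraction/Newton step into genuine ASD connections, then an injectivity/distortion estimate feeding into $\widim_\varepsilon$. The window index count, the cutoff, and the perturbation step all match what the paper does in Sections~\ref{section: parameter space of the deformation} and \ref{subsection: deformation theory}.

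However, there is a genuine gap, and it is precisely at the point you dismiss in one clause. You assert that ``the time-$T$ distance between $\Phi(v)$ and $\Phi(v')$ is bounded below by $c\norm{v-v'}$'' simply because $v-v'$ is supported in $W$ and the time-$T$ metric samples that region. This does not follow: the distance on $\moduli_d$ is a distance between gauge-equivalence classes, so you must rule out the existence of a gauge transformation $g$ with $g(A+v+\xi(v))$ uniformly close to $A+v'+\xi(v')$ on the sampled interval even when $\norm{v-v'}_{L^\infty(W)}$ is not small. Even with both perturbations in Coulomb gauge relative to $A$, nothing a priori prevents such a $g$; the group of all gauge transformations on the noncompact $X$ is large and is not a Banach Lie group. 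Controlling it is where the real work of the paper is: Lemmas~\ref{lemma: basic estimates from non-degeneracy}--\ref{lemma: preliminary technical estimate for coulomb gauge} and Proposition~\ref{prop: Coulomb gauge} establish, using the non-degeneracy of $A$, a quantitative gauge-fixing estimate, which then yields the distortion Lemma~\ref{lemma: distortion of the deformation map} and hence Proposition~\ref{prop: widim estimate for B_r([A])_R}.

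Related to this, you have misattributed the role of non-degeneracy. You say non-degeneracy is what ``upgrades the pointwise vanishing of the cokernel of $d_A^+$ to a length-uniform spectral gap,'' i.e.\ you use it for the surjectivity of $\mathcal{D}_A$. In fact, on $\mathbb{R}\times S^3$ the operator $d_A^+d_A^*$ is invertible on $L^\infty$-type spaces purely by the Weitzenb\"ock formula and the positive scalar curvature (Lemmas~\ref{lemma: L^infty estimate} and \ref{lemma: deformation is unobstructed}), with no non-degeneracy assumption. Non-degeneracy is used instead to get the Poincar\'e-type inequality $\int|u|^2\lesssim_A\int|d_Au|^2$ for $u\in\Omega^0(\ad E)$ (Lemma~\ref{lemma: basic estimates from non-degeneracy}(i)), which controls small gauge transformations; it is a statement about irreducibility uniform along the orbit, not about the cokernel of $d_A^+$. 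Consequently the two places you flag as ``genuine work'' (uniform surjectivity of $\mathcal{D}_A$, and making the index count quantitative) are not the hard part; the hard part, omitted in your proposal, is the gauge-fixing analysis of Section~\ref{section: Coulomb gauge}, and without it the claimed bi-Lipschitz lower bound on $\dist_{(\alpha,\beta)}(\Phi(v),\Phi(v'))$ is unjustified.
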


In this theorem the strict inequality condition $\norm{F_A}_{\mathrm{op}} < d$ is purely technical.
The point is the non-degeneracy assumption.
This makes the situation simpler.
It is more difficult to study the local structure of $\moduli_d$ around degenerate ASD connections.
We postpone it to a future paper.
In the present paper we bypass it by using the following theorem.
\begin{theorem} \label{thm: abundance of non-degenerate ASD connections}
Suppose $d>1$, and
let $A$ be an ASD connection on $E$ with $\norm{F_A}_{\mathrm{op}} < d$.
For any $\varepsilon>0$ there exists a non-degenerate ASD connection $\tilde{A}$ on $E$ satisfying 
\[ \norm{F(\tilde{A})}_{\mathrm{op}} < d, \quad \rho(\tilde{A}) > \rho(A) -\varepsilon.\]
\end{theorem}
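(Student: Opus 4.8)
The plan is to obtain $\tilde{A}$ from $A$ by grafting a copy of the operator-norm-one instanton $I$ of Example~\ref{example: BPST instanton} into each of a suitably dense family of long ``almost flat'' windows of $A$, the windows being chosen so that the grafting regions stay away from the part of $X$ where $|F_A|$ is not small. The assumption $d>1$ enters exactly here: since $\norm{F_I}_{\mathrm{op}}=1<d$, there is room to insert an instanton while keeping the operator norm below $d$. If $A$ is already non-degenerate, take $\tilde{A}=A$; so assume $A$ is degenerate. Fix a small constant $\delta_0>0$, to be constrained finitely many times below in terms of $d$ and $\varepsilon$, and for a large parameter $R>0$ set
\[ F_R:=\bigl\{\,t\in\mathbb{R}\ :\ \norm{F_A}_{L^\infty((t-2R,\,t+2R)\times S^3)}<\delta_0\,\bigr\},\]
an open subset of $\mathbb{R}$. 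By the compactness argument in the proof of Lemma~\ref{lemma: equivalent condition of non-degeneracy}, degeneracy of $A$ forces $F_R$ to contain intervals of arbitrarily large length, whereas for $t\notin F_R$ the slab $(t-2R,t+2R)\times S^3$ contains a point at which $|F_A|\ge\delta_0$ (essentially). Choose a set of grafting points $P=\{p_k\}\subset F_R$ with consecutive points a bounded multiple of $R$ apart, no two within $10R$ of each other, and with every subinterval of $F_R$ of length $100R$ containing a point of $P$; this is possible because only the long components of $F_R$ constrain the choice.

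Near each $p_k$, using Uhlenbeck's gauge fixing on the flat slab $(p_k-2R,p_k+2R)\times S^3$, write $A$ as the product connection plus a term of size $O(\delta_0)$, translate $I$ so that its curvature concentrates over $\{p_k\}\times S^3$, and interpolate between $I$ and $A$ with a cut-off supported in $\{|t-p_k|\sim R\}$. Let $A'$ be the resulting connection on $E$ (the bundle is trivial throughout): it equals $A$ outside $\bigcup_k\{|t-p_k|<R+1\}$, equals $I$ near each $p_k$, and its self-dual curvature $F_{A'}^{+}$ is supported in the cut-off annuli with $\norm{F_{A'}^{+}}\lesssim\delta_0+e^{-2R}$. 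Then I would solve $F_{\tilde{A}}^{+}=0$ by a contraction mapping, $\tilde{A}=A'+a$: each cut-off annulus lies in a region where $A'$ is $C^\infty$-close to the product connection and is separated from $\{|F_A|\ge\delta_0\}$ by a buffer of width $\sim R$, so $d^{+}_{A'}$ has there a right inverse bounded uniformly in $k$ (all grafting sites are congruent and well separated), yielding $a$ supported within the almost-flat slabs with $\norm{a}\lesssim\delta_0+e^{-2R}$. I expect this uniform-in-$k$ gluing over the infinite family of grafting sites to be the main obstacle; it is precisely the ``gluing infinitely many instantons'' ingredient referred to in the introduction, and everything around it is a covering argument exploiting $d>1$. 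Given it, $\tilde{A}=A$ off the grafting slabs, while on them $\norm{F_{\tilde{A}}}_{\mathrm{op}}\le\norm{F_I}_{\mathrm{op}}+C(\delta_0+e^{-2R})=1+C(\delta_0+e^{-2R})<d$ for $\delta_0$ small and $R$ large; hence $\norm{F(\tilde{A})}_{\mathrm{op}}<d$ everywhere.

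To see that $\tilde{A}$ is non-degenerate I would apply Lemma~\ref{lemma: equivalent condition of non-degeneracy} with $T:=104R$. Let $W$ have length $T$ and let $W'$ be $W$ shrunk by $2R$ at each end, so $|W'|=100R$. If $W'\subset F_R$, then $W'$ contains some $p_k$, and $|F_{\tilde{A}}|$ at the centre slab $\{p_k\}\times S^3$ is at least $1-C(\delta_0+e^{-2R})\ge\tfrac12$. If instead $W'\not\subset F_R$, pick $t\in W'\setminus F_R$; then $\norm{F_A}_{L^\infty((t-2R,t+2R)\times S^3)}\ge\delta_0$, and since $|F_A|<\delta_0$ on every grafting slab this supremum is essentially attained at a point of $W\times S^3$ lying outside all grafting slabs, where $\tilde{A}=A$, so $\norm{F_{\tilde{A}}}_{L^\infty(W\times S^3)}\ge\delta_0/2$. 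In both cases $\norm{F_{\tilde{A}}}_{L^\infty(W\times S^3)}$ admits a positive lower bound independent of $W$, so $\tilde{A}$ is non-degenerate.

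Finally I would bound $\rho(\tilde{A})$ from below. Since $\tilde{A}=A$ off the grafting slabs, which each have length $O(R)$, sit inside $F_R$, and meet any interval of length $S$ at most $O(S/R)+O(1)$ times, for every such interval
\[ \int_{(t,t+S)\times S^3}|F_{\tilde{A}}|^2\,d\vol\ \ge\ \int_{(t,t+S)\times S^3}|F_A|^2\,d\vol\ -\ C_0\,\delta_0^2\,S\ -\ C_0\,\delta_0^2\,R, \]
because $|F_A|<\delta_0$ on the grafting slabs. Dividing by $8\pi^2S$, taking the supremum over $t$, and letting $S\to\infty$ gives $\rho(\tilde{A})\ge\rho(A)-C_0'\delta_0^2$, which exceeds $\rho(A)-\varepsilon$ once $\delta_0$ is chosen small enough, depending only on $\varepsilon$, and compatibly with the earlier smallness requirements (Uhlenbeck gauge fixing, the contraction mapping, and $1+C\delta_0<d$). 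Taking $R$ large then produces the desired $\tilde{A}$.
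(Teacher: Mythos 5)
Your overall strategy coincides with the paper's: graft copies of the operator-norm-one instanton $I$ into the ``almost flat'' windows produced by degeneracy of $A$, obtain an approximately ASD connection, and then perturb it to an exactly ASD connection $\tilde{A}$. The quantitative reformulation of non-degeneracy (Lemma~\ref{lemma: equivalent condition of non-degeneracy}), the use of $\norm{F_I}_{\mathrm{op}}=1<d$ as the room created by $d>1$, and the structure of the final three estimates are all as in the paper (which indexes the flat windows by integers $n\in J$ with slabs $[nT,(n+1)T]$, rather than by a density condition in your set $F_R$; this is a cosmetic difference).

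There is, however, a genuine gap in the perturbation and non-degeneracy step. You assert that the perturbation $a$ produced by the contraction mapping is \emph{supported within the almost-flat slabs} (``yielding $a$ supported within the almost-flat slabs with $\norm{a}\lesssim\delta_0+e^{-2R}$''), and then you use this in the case $W'\not\subset F_R$ to say $\tilde{A}=A$ at the point of $W\times S^3$ where $|F_A|\ge\delta_0$. This is false: the right inverse of the operator $d^+_{A'}d^*_{A'}$ on the noncompact manifold $X$ is nonlocal, so $a$ is nonzero on all of $X$. Without that support claim, the bound $\norm{a}\lesssim\delta_0+e^{-2R}$ you invoke is \emph{not good enough} where you need it. At a point $q$ with $|F_A(q)|\ge\delta_0$ you get only $|F(\tilde{A})(q)|\ge\delta_0-C(\delta_0+e^{-2R})$, which does not give $\ge\delta_0/2$. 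What is needed is that the perturbation of the curvature over the non-flat slabs be \emph{much smaller} than $\delta_0$, e.g.\ $O(\delta_0^2)$, not merely $O(\delta_0)$. The paper achieves this through two ingredients you have not identified: the Weitzenb\"{o}ck identity $d_B^+d_B^*=\tfrac12(\nabla_B^*\nabla_B+S/3)$ combined with the \emph{exponential decay} of the Green kernel of $\nabla^*\nabla+S/3$ (Lemma~\ref{lemma: Green kernel estimate}, available because $X$ has uniformly positive scalar curvature $S$), and the \emph{quadratic} structure of the fixed-point equation (the source $\xi_\infty$ satisfies $|\xi_\infty|\lesssim|F_B^+|+\delta^2$, so it is $O(\delta^2)$ on the slabs where $F_B^+=0$). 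Splitting the Green-kernel integral into a near region contributing $O(\delta^2)$ and a far region at distance $\gtrsim T$ contributing $O(\delta e^{-cT})$ is exactly what produces $|\phi_\infty|\lesssim\delta^2$ on the non-flat slab, hence $|F(\tilde{A})-F_B|\lesssim\delta^2\ll\delta$ there. Your proposal acknowledges uniform-in-$k$ gluing as ``the main obstacle'' but does not supply the mechanism, and the compactly supported $a$ you posit cannot exist; the non-degeneracy argument in the non-flat case therefore does not go through as written.
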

Roughly speaking, this theorem means that we can replace a degenerate ASD connection by a non-degenerate one without 
losing energy.
In the above statement we supposed $d>1$ because there does not exist a non-flat ASD connection $A$ on $E$ 
satisfying $\norm{F_A}_{\mathrm{op}}<1$ (Example \ref{example: BPST instanton}).

The main task of the paper is to prove Theorems \ref{thm: local mean dimension around non-degenerate ASD connection}
and \ref{thm: abundance of non-degenerate ASD connections}.
Here we prove the main theorem by assuming them:

\begin{proof}[Proof of Theorem \ref{thm: main theorem} (assuming Theorems 
\ref{thm: local mean dimension around non-degenerate ASD connection} and 
\ref{thm: abundance of non-degenerate ASD connections})]
We always have the upper bound $\dim_{loc}(\moduli_d:\mathbb{R}) \leq 8\rho(d)$
by Theorem \ref{thm: upper bound}.
So the problem is the lower bound.

Let $\rho_0(d)$ be the supremum of $\rho(A)$ over $[A]\in \moduli_d$ satisfying
$\norm{F_A}_{\mathrm{op}} < d$.
Obviously $\rho_0(d)\leq \rho(d)$.
Then 
\begin{equation}  \label{eq: lower bound on local mean dimension by rho_0}
 \dim_{loc}(\moduli_d:\mathbb{R})\geq 8 \rho_0(d).
\end{equation}
This is proved as follows:

(Case 1)
Suppose $d\leq 1$. Then the condition $\norm{F_A}_{\mathrm{op}} < d$ implies $F_A\equiv 0$.
(See Example \ref{example: BPST instanton}.)
Hence $\rho_0(d)=0$ and the above (\ref{eq: lower bound on local mean dimension by rho_0})
trivially holds.
 
(Case 2) Suppose $d >1$.
Take $[A]\in \moduli_d$ with $\norm{F_A}_{\mathrm{op}} < d$.
For any $\varepsilon>0$ there exists a non-degenerate ASD connection $\tilde{A}$ on $E$ satisfying 
$\norm{F(\tilde{A})}_{\mathrm{op}}<d$ and $\rho(\tilde{A})> \rho(A)-\varepsilon$ 
(Theorem \ref{thm: abundance of non-degenerate ASD connections}).
By applying Theorem \ref{thm: local mean dimension around non-degenerate ASD connection} to $\tilde{A}$
\[ \dim_{loc}(\moduli_d:\mathbb{R}) \geq \dim_{[\tilde{A}]}(\moduli_d:\mathbb{R})
   = 8\rho(\tilde{A}) > 8(\rho(A)-\varepsilon).\]
Since $\varepsilon>0$ is arbitrary, $\dim_{loc}(\moduli_d:\mathbb{R})\geq 8\rho(A)$.
Taking the supremum over $A$, we get the above (\ref{eq: lower bound on local mean dimension by rho_0}).

For any $\varepsilon>0$, we have $\rho(d-\varepsilon)\leq \rho_0(d) \leq \rho(d)$.
Hence if $\rho$ is left-continuous at $d$ (i.e. $d\not\in \mathcal{D}$), then we have 
$\rho_0(d) =\rho(d)$.
Therefore
\[ \dim_{loc}(\moduli_d:\mathbb{R})\geq 8\rho(d) \quad (d\in [0,+\infty)\setminus \mathcal{D}).\]
\end{proof}

\begin{remark}  \label{remark: moduli_d is infinite dimensional for d>1}
Let $d>1$. By applying Theorem \ref{thm: abundance of non-degenerate ASD connections}
to a flat connection, we can conclude that $\moduli_d$ always contains a non-degenerate ASD connection.
(Indeed $\moduli_d$ always contains a non-flat periodic ASD connection.
See Remark \ref{remark: final remark}.)
Since the energy density of a non-degenerate ASD connection is positive 
(Remark \ref{remark: energy density of non-degenerate ASD connection}),
the function $\rho(d)$ is positive for $d>1$.
Moreover by Theorem \ref{thm: local mean dimension around non-degenerate ASD connection},
the local mean dimension of $\moduli_d$ is also positive for $d>1$.
In particular $\moduli_d$ is infinite dimensional for $d>1$.
\end{remark}

\subsection{Ideas of the proofs} \label{subsection: ideas of the proofs}

We explain the ideas of the proofs of Theorems 
\ref{thm: local mean dimension around non-degenerate ASD connection} and 
\ref{thm: abundance of non-degenerate ASD connections}.

The basic idea of the proof of Theorem \ref{thm: local mean dimension around non-degenerate ASD connection}
is a deformation theory.
Let $A$ be a non-degenerate ASD connection on $E$ satisfying $\norm{F_A}_{\mathrm{op}} < d$.
Let $H^1_A$ be the Banach space of $a\in \Omega^1(\ad E)$ satisfying 
\[ d_A^* a = d_A^+a = 0, \quad \norm{a}_{L^\infty(X)} < \infty.\]
Here $d_A^*$ is the formal adjoint of $d_A:\Omega^0(\ad E)\to \Omega^1(\ad E)$, 
and $d_A^+$ is the self-adjoint part of $d_A:\Omega^1(\ad E)\to \Omega^2(\ad E)$.
For each $a\in H^1_A$ the connection $A+a$ is almost ASD:
$F^+(A+a) = O(a^2)$.
Therefore there exists a small $R>0$ such that for each $a\in B_R(H^1_A)$ (the $R$-ball with respect to 
$\norm{\cdot}_{L^\infty(X)}$) we can construct a small perturbation $a'$ of $a$ satisfying $F^+(A+a')=0$.  
So we get a deformation map:
\begin{equation} \label{eq: deformation map in introduction}
 B_R(H^1_A)\to \moduli_d, \quad a\mapsto [A+a'].
\end{equation}
We study the local mean dimension of $\moduli_d$ through this map.

A construction of the map (\ref{eq: deformation map in introduction}) does not require the non-degeneracy condition
of $A$.
But a further study of (\ref{eq: deformation map in introduction}) requires it.
We need to compare the distances of the both sides of (\ref{eq: deformation map in introduction}).
$\moduli_d$ is a quotient space by gauge transformations.
Hence its metric structure is more complicated than that of $B_R(H^1_A)$.
For example, even if $a,b\in B_R(H^1_A)$ are not close to each other, the points 
$[A+a']$ and $[A+b']$ might be very close to each other in $\moduli_d$.
We need the non-degeneracy condition for addressing this problem.
This is a technical issue. So here we don't go into the detail but just point out that 
the above map (\ref{eq: deformation map in introduction}) becomes injective if $R\ll 1$ and $A$ 
is non-degenerate.
(This injectivity is not enough for our main purpose.
The result we need is stated in Lemma \ref{lemma: distortion of the deformation map}, and it is
based on the study of the Coulomb gauge condition in Section \ref{section: Coulomb gauge}.)

Assume that we have a good understanding of the deformation map (\ref{eq: deformation map in introduction}).
A next problem is the study of the Banach space $H^1_A$.
We investigate a structure of finite dimensional linear subspaces of $H^1_A$.
($H^1_A$ itself is infinite dimensional.)
We need the following result (Proposition \ref{prop: main result of the study of H^1_A}):
For any interval $(\alpha,\beta)\subset \mathbb{R}$ of length $>2$ there exists a finite dimensional linear subspace
$V\subset H^1_A$ such that 
\begin{equation*}
  \begin{split}
  \dim V \geq \frac{1}{\pi^2}\int_{(\alpha,\beta)\times S^3}|F_A|^2d\vol -\const_A,\\
  \forall a\in H^1_A: \, \norm{a}_{L^\infty(X)} \leq 2\norm{a}_{L^\infty((\alpha,\beta)\times S^3)}.
  \end{split} 
\end{equation*}
The energy density $\rho(A)$ comes into our argument through the first condition of $V$.
The second condition means that essentially all the information of $a\in V$ is contained in the region 
$(\alpha,\beta)\times S^3$.
A main ingredient of the proof of this result is the Atiyah--Singer index theorem.
Combining this knowledge on $H^1_A$ with the study of the deformation map 
(\ref{eq: deformation map in introduction}), 
we can prove Theorem \ref{thm: local mean dimension around non-degenerate ASD connection}.
The proof is finished in Section \ref{subsection: proof of Theorem local mean dimension around A}.

Next we explain the idea of the proof of Theorem \ref{thm: abundance of non-degenerate ASD connections}.
Suppose $d>1$ and that $A$ is a degenerate ASD connection on $E$ with $\norm{F_A}_{\mathrm{op}} < d$.
We want to replace $A$ with a non-degenerate one.
The idea is gluing instantons.
Lemma \ref{lemma: equivalent condition of non-degeneracy} implies that $A$ has a region
where the curvature $F_A$ is very small.
We glue an instanton $I$ (described in Example \ref{example: BPST instanton}) to $A$ over such a ``degenerate region''.
$A$ has infinitely many degenerate regions.
So we need to glue infinitely many instantons to $A$.

More precisely the argument goes as follows:
Let $0< \delta\ll 1$ and $T\gg 1$.
We define $J\subset \mathbb{Z}$ as the set of $n\in \mathbb{Z}$ such that 
$|F_A| < \delta$ over $[nT,(n+1)T]\times S^3$.
Since $A$ is degenerate, the set $J$ is infinite.
For each $n\in J$ we glue (an appropriate translation of) the instanton $I$ to $A$ over 
the region $[nT,(n+1)T]\times S^3$.
If we choose $\delta$ sufficiently small and $T$ sufficiently large, then 
the resulting new ASD connection $\tilde{A}$ becomes non-degenerate and satisfies
$\norm{F(\tilde{A})}_{\mathrm{op}} < d$.
Moreover, roughly speaking, gluing instantons increases the energy of connections.
So we have $\rho(\tilde{A}) > \rho(A)-\varepsilon$.

The paper \cite{Matsuo-Tsukamoto Brody curve} is the origin of our idea to use the deformation theory of non-degenerate 
objects and gluing infinitely many instantons.
In \cite{Matsuo-Tsukamoto Brody curve} we study the mean dimension of the system of Brody curves 
(holomorphic $1$-Lipschitz maps) $f:\mathbb{C}\to \mathbb{C}P^N$
by developing the deformation theory of non-degenerate Brody curves and gluing technique of infinitely many rational curves.
After the authors wrote the paper \cite{Matsuo-Tsukamoto Brody curve}, they felt that the ideas of 
\cite{Matsuo-Tsukamoto Brody curve} have a wide applicability beyond the holomorphic curve theory.
The second main purpose of the present paper is to show that a basic structure of the argument in
\cite{Matsuo-Tsukamoto Brody curve} is certainly flexible and can be also applied to Yang--Mills theory.
The authors are satisfied with the result.

The main difference between the case of Brody curves and Yang--Mills theory is the presence of gauge transformations. 
A substantial part of the present paper is devoted to the study of the method to deal with gauge transformations.
(The technique of perturbing Hermitian metrics described in \cite[Section 4.2]{Matsuo-Tsukamoto Brody curve}
might have a flavor of gauge fixing. But it is much simpler.)
At least for our present technology, the Yang--Mills case is more involved than Brody curves.
Another, relatively minor, difference is the techniques of gluing.
The gluing construction in \cite{Matsuo-Tsukamoto Brody curve} is more elementary than that of the present paper.
The reason is that for meromorphic functions $f$ and $g$ in $\mathbb{C}$ we have a natural definition of
their sum $f+g$.
But we don't have such a definition for the ``sum'' of ASD connections.

\subsection{Organization of the paper} 

Section \ref{section: review of mean dimension and local mean dimension}
is a review of mean dimension and local mean dimension.
Section \ref{section: Coulomb gauge} is devoted to the study of the Coulomb gauge condition.
In Section \ref{section: parameter space of the deformation}
we study the Banach space $H^1_A$.
In Section \ref{section: deformation theory} we develop the deformation theory of non-degenerate 
ASD connections and prove Theorem \ref{thm: local mean dimension around non-degenerate ASD connection}.
In Section \ref{section: gluing infinitely many instantons}
we study the gluing method and prove Theorem \ref{thm: abundance of non-degenerate ASD connections}.
In Appendix we investigate another definition of the ASD moduli space.

\subsection{Notations} \label{subsection: notations}

$\bullet$
In most of the argument the variable $t$ means the natural projection $t:\mathbb{R}\times S^3\to \mathbb{R}$.

$\bullet$ 
The value of $d$ (which is used to define $\moduli_d$) is fixed in the rest of this paper (except for Appendix).
So we usually omit to write the dependence on $d$.
We adopt the following notation:
\begin{notation}
For two quantities $x$ and $y$ we write
\[ x \lesssim y\]
if there exists a positive constant $C(d)$ which depends only on $d$ such that 
$x \leq C(d) y$.
Let $A$ be a connection on $E$. We also use the following notation:
\[ x \lesssim_A y.\]
This means that there exists a positive constant $C(d,A)$ which depends only on $d$ and $A$ such that 
$x\leq C(d,A) y$.
The notation $x\lesssim_A y$ is used in Sections \ref{section: Coulomb gauge}, \ref{section: parameter space of the deformation}
and \ref{section: deformation theory} where we fix a connection $A$ in most of the argument.
\end{notation}

$\bullet$
Let $A$ be a connection on $E$. Let $k\geq 0$ be an integer, and let $p\geq 1$.
For $\xi\in \Omega^i(\ad E)$ $(0\leq i\leq 4)$
and a subset $U\subset X$, we define a norm $\norm{\xi}_{L^p_{k,A}(U)}$ by 
\[ \norm{\xi}_{L^p_{k,A}(U)} := \left(\sum_{j=0}^k \norm{\nabla_A^j\xi}_{L^p(U)}^p\right)^{1/p}.\]
For $\alpha<\beta$ we often denote the norm $\norm{\xi}_{L^p_{k,A}((\alpha,\beta)\times S^3)}$
by $\norm{\xi}_{L^p_{k,A}(\alpha<t<\beta)}$.

\section{Review of mean dimension and local mean dimension} 
\label{section: review of mean dimension and local mean dimension}
In this section we review mean dimension and local mean dimension.
Mean dimension was introduced by Gromov \cite{Gromov}.
Lindenstrauss--Weiss \cite{Lindenstrauss-Weiss} and Lindenstrauss \cite{Lindenstrauss} 
also gave fundamental contributions to the basics of this invariant.
Local mean dimension was introduced in \cite{Matsuo-Tsukamoto}.

Let $(M,\dist)$ be a compact metric space
($\dist$ is a distance function on $M$).
Let $N$ be a topological space, and let $f:M\to N$ be a continuous map.
For $\varepsilon>0$, $f$ is called an $\varepsilon$-embedding if $\Diam f^{-1}(y)\leq \varepsilon$
for all $y\in N$.
Here $\Diam f^{-1}(y)$ is the supremum of $\dist(x_1, x_2)$ over all $x_1$ and $x_2$ in the fiber $f^{-1}(y)$.
Let $\widim_\varepsilon(M,\dist)$ be the minimum integer $n\geq 0$ such that 
there exist an $n$-dimensional polyhedron $P$ and an $\varepsilon$-embedding $f:M\to P$.
The topological dimension $\dim M$ is equal to the limit of $\widim_\varepsilon(M,\dist)$ as $\varepsilon\to 0$.

The following important example was given in \cite[p. 333]{Gromov}.
This will be used in Section \ref{section: deformation theory}.
The detailed proofs are given in Gournay \cite[Lemma 2.5]{Gournay widths} and Tsukamoto
\cite[Appendix]{Tsukamoto-deformation}.
\begin{example} \label{example: widim of the Banach ball}
Let $(V,\norm{\cdot})$ be a finite dimensional Banach space.
Let $B_r(V)$ be the closed ball of radius $r>0$ centered at the origin.
Then 
\[ \widim_\varepsilon(B_r(V), \norm{\cdot}) = \dim V, \quad (0<\varepsilon <r).\]
\end{example}
Suppose that the Lie group $\mathbb{R}$ continuously acts on a compact metric space $(M,\dist)$.
For a subset $\Omega\subset \mathbb{R}$, we define a new distance $\dist_\Omega$ on $M$ by 
$\dist_\Omega(x,y) := \sup_{a\in \Omega} \dist(a.x,a.y)$ $(x,y\in M)$.
We define the mean dimension $\dim(M:\mathbb{R})$ by 
\[ \dim(M:\mathbb{R}) := \lim_{\varepsilon\to 0}
\left(\lim_{T\to +\infty}\frac{\widim_\varepsilon(M,\dist_{(0,T)})}{T}\right).\]
This limit always exists because we have the following sub-additivity:
\[ \widim_\varepsilon(M, \dist_{(0,T_1+T_2)}) \leq 
   \widim_\varepsilon(M,\dist_{(0,T_1)})+\widim_\varepsilon(M,\dist_{(0,T_2)}).\]
The mean dimension $\dim(M:\mathbb{R})$ is a topological invariant.
(This means that its value is independent of the choice of a distance function compatible with the topology.)
If $M$ is finite dimensional, then the mean dimension $\dim(M:\mathbb{R})$ is equal to $0$.

Let $N\subset M$ be a closed subset.
The function 
\[ T\mapsto \sup_{a\in \mathbb{R}}\widim_\varepsilon(N,\dist_{(a,a+T)}) \]
is also sub-additive.
So we can define the following quantity:
\[ \dim(N:\mathbb{R}):=\lim_{\varepsilon\to 0}
  \left(\lim_{T\to +\infty}\frac{\sup_{a\in \mathbb{R}}\widim_\varepsilon(N, \dist_{(a,a+T)})}{T}\right).\]

For $r>0$ and $p\in M$ we define $B_r(p)_{\mathbb{R}}$ as the set of points $x\in M$ 
satisfying $\dist_{\mathbb{R}}(p,x)\leq r$.
(Note that $\dist_{\mathbb{R}}(p,x)\leq r$ means $\dist(a.p,a.x)\leq r$ for all $a\in \mathbb{R}$.)
We define the local mean dimension $\dim_{p}(M:\mathbb{R})$ at $p$ by 
\[ \dim_p(M:\mathbb{R}) := \lim_{r\to 0}\dim(B_r(p)_{\mathbb{R}}:\mathbb{R}).\]
We define the local mean dimension $\dim_{loc}(M:\mathbb{R})$ by 
\[ \dim_{loc}(M:\mathbb{R}) := \sup_{p\in M}\dim_p(M:\mathbb{R}).\]
$\dim_p(M:\mathbb{R})$ and $\dim_{loc}(M:\mathbb{R})$ are topological invariants of the dynamical system $M$.
We always have 
\[ \dim_p(M:\mathbb{R}) \leq \dim_{loc}(M:\mathbb{R}) \leq \dim(M:\mathbb{R}).\]

In this paper we define mean dimension only for $\mathbb{R}$-actions.
But we can define it for more general group actions.
Gromov \cite{Gromov} defined mean dimension for actions of amenable groups.
The most basic example is the natural $\mathbb{Z}$-action (shift action) on the infinite dimensional cube 
\[ [0,1]^{\mathbb{Z}} := \cdots\times [0,1]\times [0,1]\times [0,1]\times \cdots.\]
Its mean dimension and local mean dimension are given by 
\[ \dim_{0}([0,1]^{\mathbb{Z}}:\mathbb{Z}) = \dim_{loc}([0,1]^{\mathbb{Z}}:\mathbb{Z}) 
  = \dim([0,1]^{\mathbb{Z}}:\mathbb{Z}) = 1.\]
Here $0 = (\dots,0,0,0,\dots)\in [0,1]^{\mathbb{Z}}$.
We don't need this result in this paper. So we omit the detail.
The detailed explanations can be found in 
Lindenstrauss--Weiss \cite[Proposition 3.3]{Lindenstrauss-Weiss} and \cite[Example 2.9]{Matsuo-Tsukamoto}.

\section{Coulomb gauge} \label{section: Coulomb gauge}

In this section we study a gauge fixing condition.
This is a technical step toward the proof
of Theorem \ref{thm: local mean dimension around non-degenerate ASD connection}.
The ASD equation is not elliptic and admits a large symmetry of gauge transformations.
So in the standard Yang--Mills theory 
we introduce the Coulomb gauge condition in order to break the gauge symmetry and 
get the ellipticity of the equation.
In our situation the gauge fixing seems more involved than in the standard argument.
A difficulty lies in the point that we need to consider \textit{all} gauge transformations 
$g:E\to E$ (without any asymptotic condition at the end) and that they don't form a Banach Lie group.
The main result of this section is Proposition \ref{prop: Coulomb gauge}.
But its statement is not simple.
Probably Corollary \ref{cor: gauge fixing} is easier to understand.
So it might be helpful for some readers to look at Corollary \ref{cor: gauge fixing}
before reading the proof of Proposition \ref{prop: Coulomb gauge}.

The next lemma is proved in \cite[Corollary 6.3]{Matsuo-Tsukamoto}.
This is crucial for our argument.
\begin{lemma} \label{lemma: non-flat implies irreducible}
If $A$ is a non-flat ASD connection on $E$ satisfying $\norm{F_A}_{\mathrm{op}} < \infty$, 
then $A$ is irreducible.
(Recall that $A$ is said to be reducible if there is a gauge transformation $g \neq \pm 1$ satisfying 
$g(A)=A$.
$A$ is said to be irreducible if $A$ is not reducible.)
\end{lemma}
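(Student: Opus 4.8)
The plan is to prove the contrapositive: any reducible connection $A$ on $E$ with $\norm{F_A}_{\mathrm{op}}<\infty$ is flat. So suppose $g\neq\pm 1$ is a gauge transformation with $g(A)=A$. Regarding $g$ as a section of $\mathrm{Aut}(E)=X\times SU(2)$, the condition $g(A)=A$ is equivalent to $d_Ag=0$. I would then pass from $g$ to a parallel section of $\ad E$ by setting $\sigma:=g-\tfrac12(\mathrm{tr}\,g)\cdot 1$: using $\mathrm{tr}\,g\in\mathbb{R}$ and $g^*=g^{-1}=(\mathrm{tr}\,g)\cdot 1-g$ one checks that $\sigma$ is trace-free and skew-Hermitian, i.e.\ $\sigma\in\Omega^0(\ad E)$, and since $\mathrm{tr}(d_Ag)=d(\mathrm{tr}\,g)=0$ the section $\sigma$ is again $A$-parallel. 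As $\sigma(p)=0$ precisely when $g(p)=\pm 1$, while $|\sigma|$ is constant and $g\neq\pm 1$ somewhere, $\sigma$ is nowhere zero. Thus reducibility hands us a nowhere-vanishing, $A$-parallel section $\sigma\in\Omega^0(\ad E)$ of constant length.

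Next I would use $\sigma$ to abelianize the curvature. Differentiating once more, $0=d_A^2\sigma=[F_A,\sigma]$ pointwise. Since the centralizer of a nonzero element of $su(2)$ is the line it spans, $F_A(p)\in\Lambda^2(T^*_pX)\otimes\mathbb{R}\,\sigma(p)$ for every $p$, so $F_A=\beta\otimes\widehat\sigma$ for a smooth real-valued $2$-form $\beta$, where $\widehat\sigma:=\sigma/|\sigma|$ is a parallel unit section. Anti-self-duality of $A$ gives $*\beta=-\beta$, and $|\beta|=|F_A|$, so $\sup_X|\beta|<\infty$ (the operator and Euclidean norms of $F_A$ being equivalent, $\norm{F_A}_{\mathrm{op}}<\infty$ bounds $|F_A|$). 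From Bianchi $d_AF_A=0$ and $d_A\widehat\sigma=0$ we get $d\beta=0$, whence $d^*\beta=-*d*\beta=*d\beta=0$. So $\beta$ is a bounded harmonic anti-self-dual $2$-form on $X=\mathbb{R}\times S^3$, and it remains to prove that any such $\beta$ vanishes.

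For that last step I would invoke the geometry of the cylinder: $X$ is conformally flat, so its Weyl tensor vanishes, and it has constant positive scalar curvature $s$. The Weitzenb\"{o}ck formula on anti-self-dual $2$-forms therefore carries no Weyl term, so harmonicity of $\beta$ reads $\nabla^*\nabla\beta+\tfrac{s}{3}\beta=0$; taking the pointwise inner product with $\beta$ and using the Bochner identity $\tfrac12\Delta|\beta|^2=|\nabla\beta|^2-\langle\nabla^*\nabla\beta,\beta\rangle$ (with $\Delta$ the Laplace--Beltrami operator), the bounded nonnegative function $u:=|\beta|^2$ satisfies $\tfrac12\Delta u=|\nabla\beta|^2+\tfrac{s}{3}u$, hence $\Delta u\geq\tfrac{2s}{3}u\geq 0$. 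Since $X$ is complete with nonnegative Ricci curvature, the Omori--Yau maximum principle at infinity produces points $p_k$ with $u(p_k)\to\sup_X u$ and $\limsup_k\Delta u(p_k)\leq 0$; combined with $\Delta u\geq\tfrac{2s}{3}u$ this forces $\sup_X u\leq 0$, so $\beta\equiv 0$ and $F_A\equiv 0$, contradicting the non-flatness of $A$. (Alternatively, avoiding Weitzenb\"{o}ck, one can argue by separation of variables: an anti-self-dual $2$-form on the product has the shape $\beta=dt\wedge\gamma(t)-*_{S^3}\gamma(t)$, and $d\beta=d^*\beta=0$ becomes $d^*_{S^3}\gamma=0$ together with $\dot\gamma=-(*_{S^3}d_{S^3})\gamma$; the operator $*_{S^3}d_{S^3}$ on coclosed $1$-forms on $S^3$ is elliptic, self-adjoint, and invertible because $b_1(S^3)=0$, so every eigenmode of $\gamma$ grows exponentially in forward or backward time and boundedness of $\beta$ forces $\gamma\equiv 0$.)

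The main obstacle is the non-compactness of $X$: one cannot integrate the Weitzenb\"{o}ck identity over $X$, so the curvature estimate has to be propagated pointwise through a maximum principle at infinity, and this is precisely where both the hypothesis $\norm{F_A}_{\mathrm{op}}<\infty$ (equivalently $|F_A|\in L^\infty$) and the uniform positivity of the scalar curvature of the $S^3$ factor are indispensable; in the Fourier-mode variant the same two inputs are exactly what exclude nonzero bounded solutions of the evolution ODE.
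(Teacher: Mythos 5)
The paper does not prove this lemma in-line; it cites \cite[Corollary 6.3]{Matsuo-Tsukamoto}, so there is no internal proof to compare against. Your argument is correct and self-contained, and it follows the natural route. From $g\neq\pm1$ with $g(A)=A$ you manufacture a parallel, nowhere-zero section $\sigma\in\Omega^0(\ad E)$ of constant length (the passage $g\mapsto g-\tfrac12(\mathrm{tr}\,g)\cdot 1$ is exactly what Cayley--Hamilton gives for $SU(2)$, and your verification that it lands in $su(2)$ and vanishes only where $g=\pm1$ is right); $[F_A,\sigma]=0$ abelianizes $F_A=\beta\otimes\widehat{\sigma}$; and one is reduced to showing that a bounded harmonic anti-self-dual $2$-form on $\mathbb{R}\times S^3$ vanishes, using conformal flatness and uniformly positive scalar curvature.

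Two remarks on the last step. First, you do not need the Omori--Yau machinery: the paper's Lemma \ref{lemma: L^infty estimate} (quoted from \cite[Proposition A.5]{Matsuo-Tsukamoto}) is precisely the $L^\infty$ estimate $\norm{\xi}_{L^\infty}\leq(24/S)\norm{(\nabla^*\nabla+S/3)\xi}_{L^\infty}$ on the cylinder, and the same estimate for $\Lambda^-$ (the geometry is orientation-symmetric, and scalar coefficients are a trivial case of $\ad E$-coefficients) with right-hand side zero gives $\beta\equiv0$ at once; the Weitzenb\"{o}ck formula plus this cylinder-specific Green-kernel bound is exactly the input the authors use elsewhere. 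Second, your separation-of-variables alternative is also correct and is arguably the most transparent: $b_1(S^3)=0$ makes $*_{S^3}d_{S^3}$ invertible on coclosed $1$-forms, so every Fourier mode of $\gamma$ grows exponentially in at least one time direction, and the global $L^\infty$ bound on $F_A$ (equivalent to finiteness of $\norm{F_A}_{\mathrm{op}}$, as noted in the paper's Appendix) kills all of them. Both of your closing arguments and the reduction to the abelian case are sound.
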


In the rest of this section we always suppose that $A$ is a non-degenerate ASD connection on $E$
satisfying $\norm{F_A}_{\mathrm{op}}\leq d$.
The next lemma shows crucial properties of non-degenerate ASD connections.
\begin{lemma} \label{lemma: basic estimates from non-degeneracy}
\noindent 
(i) For any $s\in \mathbb{R}$ and any $u\in \Omega^0(\ad E)$,
\[ \int_{s < t< s+1}|u|^2d\vol \leq C_1(A)\int_{s<t<s+1}|d_Au|^2d\vol.\]

\noindent 
(ii) For any $s\in \mathbb{R}$ and any gauge transformation $g:E\to E$,
\[ \min\left(\norm{g-1}_{L^\infty(s<t<s+1)},\norm{g+1}_{L^\infty(s<t<s+1)}\right) 
   \leq C_2(A) \norm{d_A g}_{L^\infty(s<t<s+1)}.\]
We will abbreviate the left-hand-side to $\min_{\pm} \norm{g\pm 1}_{L^\infty(s<t<s+1)}$. 
\end{lemma}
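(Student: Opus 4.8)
The plan is to prove both estimates by one compactness-and-contradiction argument, exploiting that $t$-translations keep $[A]$ inside the compact space $\moduli_d$. If an estimate failed, a blow-up sequence of counterexamples sitting on cylinders $(s_n,s_n+1)\times S^3$ could be translated back to the fixed cylinder $Q:=(0,1)\times S^3$; by compactness of $\moduli_d$ the pulled-back connections $s_n^*A$ would converge, after suitable gauge transformations $h_n$, in $C^\infty$ on compact sets to an ASD connection $A_\infty$ with $[A_\infty]$ in the closure of the $\mathbb{R}$-orbit of $[A]$. By non-degeneracy $A_\infty$ is non-flat, hence irreducible by Lemma \ref{lemma: non-flat implies irreducible}. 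The contradiction I aim for is that the blow-up limit of the test objects is a nonzero covariantly constant section of $\ad E$, resp.\ a covariantly constant gauge transformation $\neq\pm1$, over $Q$, and that such an object forces $A_\infty$ to be reducible.

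First I would do (i). Suppose there are $s_n\in\mathbb{R}$ and $u_n\in\Omega^0(\ad E)$ with $\norm{u_n}_{L^2(s_n<t<s_n+1)}=1$ and $\norm{d_Au_n}_{L^2(s_n<t<s_n+1)}\to 0$. Pulling back by the isometry $t\mapsto t+s_n$ gives $\tilde u_n$ on $Q$ with $\norm{\tilde u_n}_{L^2(Q)}=1$ and $\norm{d_{s_n^*A}\tilde u_n}_{L^2(Q)}\to 0$; choose $h_n$ as above and set $v_n:=h_n\tilde u_nh_n^{-1}$. The pointwise adjoint action is an isometry and intertwines $d_{s_n^*A}$ with $d_{h_n(s_n^*A)}$, so $\norm{v_n}_{L^2(Q)}=1$ and $\norm{d_{h_n(s_n^*A)}v_n}_{L^2(Q)}\to 0$. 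Since the connection forms of $h_n(s_n^*A)$ are uniformly bounded on $\overline Q$, $\{v_n\}$ is bounded in $L^2_{1}(Q)$, so after passing to a subsequence $v_n\rightharpoonup v$ in $L^2_1(Q)$ and $v_n\to v$ in $L^2(Q)$. Then $\norm{v}_{L^2(Q)}=1$, hence $v\not\equiv0$, and (the curvature term of the covariant derivative converges strongly in $L^2$) $\norm{d_{A_\infty}v}_{L^2(Q)}\le\liminf\norm{d_{h_n(s_n^*A)}v_n}_{L^2(Q)}=0$. So $d_{A_\infty}v=0$ on $Q$ with $v\neq0$; by unique continuation $v$ extends to a covariantly constant section of $\ad E$ over all of $X$, so $\exp(\tau v)$ ($0<|\tau|\ll1$) is a gauge transformation $\neq\pm1$ fixing $A_\infty$ — contradicting irreducibility.

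For (ii) I would argue similarly, with two preliminary remarks. First, $\min_\pm\norm{g\pm1}_{L^\infty(s<t<s+1)}\le 2$ automatically since $g$ is $SU(2)$-valued. Second, $\pm1$ is central and the matrix norm is conjugation-invariant, so after translating, applying the compactness of $\moduli_d$ and conjugating by $h_n$, the translated counterexamples $\hat g_n:=h_n\tilde g_nh_n^{-1}$ still satisfy $\norm{d_{h_n(s_n^*A)}\hat g_n}_{L^\infty(Q)}\to 0$ and have $\min_\pm\norm{\hat g_n\pm1}_{L^\infty(Q)}$ equal to the corresponding quantity for $g_n$. If this quantity stays bounded below, the uniform bound on the connection forms over $\overline Q$ gives a uniform $W^{1,\infty}(\overline Q)$ bound, Arzelà--Ascoli yields $\hat g_n\to g_\infty$ in $C^0(\overline Q)$ with values in $SU(2)$, $d_{A_\infty}g_\infty=0$ on $Q$ and $g_\infty\not\equiv\pm1$, and unique continuation again produces a reduction of $A_\infty$. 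If instead the quantity tends to $0$, then on the cylinder $g_n=\pm\exp(\xi_n)$ with $\xi_n\in\Omega^0(\ad E)$ small and $\norm{\xi_n}_{L^\infty}\asymp\min_\pm\norm{g_n\pm1}_{L^\infty}$, $\norm{d_A\xi_n}_{L^\infty}\asymp\norm{d_Ag_n}_{L^\infty}$ there; dividing $\xi_n$ by $\norm{\xi_n}_{L^\infty(s_n<t<s_n+1)}$ reduces matters to the situation of (i), but with $L^\infty$ in place of $L^2$, and the same compactness argument (extracting a $C^0$ limit from a $W^{1,\infty}$ bound) gives the contradiction.

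The hard part will be the step that promotes a covariantly constant section of $\ad E$, or a non-central covariantly constant gauge transformation, defined only over the unit cylinder $Q$, to a global object, yielding reducibility of $A_\infty$: this requires a unique continuation statement for the overdetermined first-order operator $d_{A_\infty}$ on $\ad E$, in the spirit of the unique continuation for the curvature used in the proof of Lemma \ref{lemma: equivalent condition of non-degeneracy}. The remaining work is bookkeeping — keeping all elliptic estimates uniform in $s_n$ (so the constants depend only on $A$) while simultaneously controlling the varying gauged connections $h_n(s_n^*A)$ and the rescaled test objects.
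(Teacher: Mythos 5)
Your proof is correct and follows essentially the same compactness-and-contradiction strategy as the paper: translate, gauge to a convergent sequence of connections using compactness of $\moduli_d$, extract a covariantly constant limit, and invoke irreducibility of the limit (via Lemma \ref{lemma: non-flat implies irreducible}) plus unique continuation. The only minor divergence is in (ii): where you prove an $L^\infty$-to-$L^\infty$ version of (i) via a $W^{1,\infty}$ bound and Arzel\`a--Ascoli and split into two cases, the paper instead proves an $L^\infty$-to-$L^p$ estimate ($4<p<\infty$, using the compact embedding $L^p_1\hookrightarrow C^0$) and notes that the weak/strong limit $g'$ must equal $\pm 1$ by irreducibility, which makes your ``bounded-below'' case vacuous; both routes then write $g'_k=\exp(u_k)$ and apply the auxiliary estimate.
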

\begin{proof}
(i)
Suppose that the statement is false.
Then there exist $s_n \in \mathbb{R}$ and $u_n\in \Omega^0(\ad E)$ satisfying 
\[ 1 = \int_{s_n<t<s_n+1}|u_n|^2d\vol > n\int_{s_n<t<s_n+1}|d_A u_n|^2 d\vol.\]
Set $v_n := s_n^*(u_n)$ and $A_n:=s_n^*(A)$ (the pull-backs by $s_n:E\to E$).
Then 
\[ 1 = \int_{0<t<1}|v_n|^2 d\vol > n\int_{0<t<1}|d_{A_n} v_n|^2d\vol.\]
Since $\moduli_d$ is compact, there exist a sequence of natural numbers 
$n_1<n_2<n_3<\cdots$ and gauge transformations $g_k :E\to E$ $(k\geq 1)$ such that 
$B_k:=g_k(A_{n_k})$ converges to some $B$ in $\mathcal{C}^\infty$ over every compact subset of $X$.
Since $A$ is non-degenerate, $B$ is not flat and hence irreducible by Lemma \ref{lemma: non-flat implies irreducible}.
Set $w_k:=g_k(v_{n_k})$. Then 
\[ 1 = \int_{0<t<1}|w_k|^2d\vol > n_k \int_{0<t<1} |d_{B_k}w_k|^2d\vol.\]
Since $d_B w_k = d_{B_k}w_k + [B-B_k, w_k]$, the sequence $\{w_k\}$ is bounded in $L^2_{1,B}((0,1)\times S^3)$.
Hence, by choosing a subsequence, we can assume that $w_k$ weakly converges to 
some $w$ in $L^2_{1,B}((0,1)\times S^3)$.
We have $\norm{w}_{L^2(0<t<1)}=1$ and $d_B w=0$ over $(0,1)\times S^3$.
This means that the connection $B$ is reducible over $(0,1)\times S^3$.
By the unique continuation theorem \cite[p. 150]{Donaldson-Kronheimer}, $B$ is reducible over $X$.
This is a contradiction.

(ii) Fix $4<p<\infty$. (Note that the Sobolev embedding $L^p_1\hookrightarrow C^0$ is compact.)
By an argument similar to the above (i),
we can prove the following statement:
For any $s \in \mathbb{R}$ and any $u\in \Omega^0(\ad E)$ 
\begin{equation} \label{eq: auxiliary estimate for basic estimates from non-degeneracy}
  \norm{u}_{L^\infty(s<t<s+1)} \lesssim_A C(p) \norm{d_A u}_{L^p(s<t<s+1)}.
\end{equation}

We prove (ii) by using this statement.
Suppose (ii) is false.
Then, as in the proof of (i), there exist connections $A_n$ (which are translations of $A$) and 
gauge transformations $g_n:E\to E$ satisfying 
\[ \min_{\pm}\norm{g_n\pm 1}_{L^\infty(0<t<1)} > n\norm{d_{A_n}g_n}_{L^\infty(0<t<1)}.\]
We can choose a sequence of natural numbers $n_1<n_2<n_3<\cdots$ and 
gauge transformations $h_k:E\to E$ $(k\geq 1)$ such that 
$B_k:=h_k(A_{n_k})$ converges to some $B$ in $\mathcal{C}^\infty$ over every compact subset.
$B$ is irreducible.
Set $g'_k := h_k g_{n_k} h_k^{-1}$. Then 
\begin{equation} \label{eq: |g'_k +- 1| > n_k |d_{B_k}g'_k|}
 \min_{\pm}\norm{g'_k\pm 1}_{L^\infty(0<t<1)} > n_k \norm{d_{B_k}g'_k}_{L^\infty(0<t<1)}.
\end{equation}
$\{g'_k\}$ is bounded in $L^p_{1,B}((0,1)\times S^3)$.
By choosing a subsequence, $g'_k$ converges to some $g'$ weakly in $L^p_{1,B}((0,1)\times S^3)$
and strongly in $L^\infty((0,1)\times S^3)$.
We have $d_B g'=0$.
Since $B$ is irreducible, $g'=\pm 1$.
We can assume $g'=1$ without loss of generality.
Then there are $u_k\in L^p_{1,B}((0,1)\times S^3, \Lambda^0(\ad E))$ $(k\gg 1)$ satisfying
$g'_k = e^{u_k}$ and $|u_k|\lesssim |g'_k-1|$ over $0<t<1$.
Then by (\ref{eq: auxiliary estimate for basic estimates from non-degeneracy})
\[ \norm{g'_k-1}_{L^\infty(0<t<1)}\lesssim \norm{u_k}_{L^\infty(0<t<1)}
    \lesssim_A C(p) \norm{d_{B_k} u_k}_{L^p(0<t<1)}.\]
We have $\norm{d_{B_k}u_k}_{L^p(0<t<1)}\leq 2\norm{d_{B_k}g'_k}_{L^p(0<t<1)}$
for $k\gg 1$.
Hence, for $k\gg 1$,
\[ \norm{g'_k-1}_{L^\infty(0<t<1)} \lesssim_A \norm{d_{B_k}g'_k}_{L^\infty(0<t<1)}.\]
This contradicts (\ref{eq: |g'_k +- 1| > n_k |d_{B_k}g'_k|}). 
\end{proof}

\begin{lemma} \label{lemma: estimate on gauge transformation from non-degeneracy}
There exists a positive number $\varepsilon_1 = \varepsilon_1(A)$ such that, 
for any integers $m<n$ and any gauge transformation $g:E\to E$, 
if $\norm{d_A g}_{L^\infty(m<t<n)}\leq \varepsilon_1$ then 
\[ \min_{\pm} \norm{g\pm 1}_{L^\infty(m<t<n)}
    \leq C_2(A) \norm{d_A g}_{L^\infty(m<t<n)}.\]
This is also true for the case $(m,n)=(-\infty,\infty)$.
\end{lemma}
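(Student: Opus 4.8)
The plan is to promote the unit-window estimate of Lemma~\ref{lemma: basic estimates from non-degeneracy}(ii) to the long interval $(m,n)$ by a ``gluing of signs'' argument. First I would choose $\varepsilon_1 = \varepsilon_1(A) > 0$ so small that $C_2(A)\varepsilon_1$ is strictly less than half the distance between the matrices $1$ and $-1$ (in the pointwise norm on $2\times 2$ matrices used throughout); the precise value is immaterial. Assume $\norm{d_A g}_{L^\infty(m<t<n)} \le \varepsilon_1$. Then for each real $s$ with $[s,s+1]\subset[m,n]$, Lemma~\ref{lemma: basic estimates from non-degeneracy}(ii) gives $\min_{\pm}\norm{g\pm 1}_{L^\infty(s<t<s+1)} \le C_2(A)\norm{d_A g}_{L^\infty(s<t<s+1)} \le C_2(A)\varepsilon_1$. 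Since at any point of $(s,s+1)\times S^3$ the numbers $|g-1|$ and $|g+1|$ cannot both be smaller than half of $|1-(-1)|$ (triangle inequality), exactly one of $\norm{g-1}_{L^\infty(s<t<s+1)}$, $\norm{g+1}_{L^\infty(s<t<s+1)}$ is $\le C_2(A)\varepsilon_1$; let $\sigma(s)\in\{+1,-1\}$ denote the corresponding sign, so that $\norm{g-\sigma(s)}_{L^\infty(s<t<s+1)} \le C_2(A)\norm{d_A g}_{L^\infty(s<t<s+1)}$.

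The heart of the matter is to check that $\sigma(s)$ is independent of $s$. If $s$ and $s'$ are close, the windows $(s,s+1)$ and $(s',s'+1)$ overlap in a nonempty open interval, on which $g$ stays within $C_2(A)\varepsilon_1$ of both $\sigma(s)$ and $\sigma(s')$; were these signs different we would get $|1-(-1)| \le 2C_2(A)\varepsilon_1$, contradicting the choice of $\varepsilon_1$. Hence $s\mapsto\sigma(s)$ is locally constant on the connected set $\{s : [s,s+1]\subset[m,n]\} = [m,n-1]$, so it equals a single $\sigma\in\{+1,-1\}$. Taking the supremum over $s\in[m,n-1]$ of $\norm{g-\sigma}_{L^\infty(s<t<s+1)} \le C_2(A)\norm{d_A g}_{L^\infty(s<t<s+1)}$ gives $\norm{g-\sigma}_{L^\infty(m<t<n)} \le C_2(A)\norm{d_A g}_{L^\infty(m<t<n)}$, and since $\min_{\pm}\norm{g\pm 1}_{L^\infty(m<t<n)} \le \norm{g-\sigma}_{L^\infty(m<t<n)}$ the lemma follows. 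The case $(m,n)=(-\infty,\infty)$ is handled identically, the index set being $\mathbb{R}$, which is still connected.

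I expect essentially no new analytic content to be needed: the only substantial input is Lemma~\ref{lemma: basic estimates from non-degeneracy}(ii), which already packages the non-degeneracy of $A$ (through a compactness argument), and the single delicate point is the global consistency of the sign $\sigma$, which is resolved purely by connectedness together with the elementary fact that $1$ and $-1$ lie a definite distance apart in $SU(2)$.
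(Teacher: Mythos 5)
Your proof is correct, and it resolves the single nontrivial point (global consistency of the sign $\sigma$) by a mechanism that is slightly different from the paper's. The paper works with the \emph{disjoint} integer windows $(k,k+1)$ and propagates the sign from one window to the next by a Lipschitz-type estimate: since $d_A(g\mp 1)=d_A g$, the bound $|d_A g|\le\varepsilon_1$ over $0\le t\le 2$ combined with $\norm{g-1}_{L^\infty(0<t<1)}\le C_2\varepsilon_1$ gives $\norm{g-1}_{L^\infty(1<t<2)}\le(C_2+1)\varepsilon_1<1$, which forces the minimizing sign on $(1,2)$ to again be $-$, and so on by induction; this is why the paper's smallness condition is $(C_2+1)\varepsilon_1<1$. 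You instead run $s$ over a continuum and use that consecutive windows $(s,s+1)$, $(s',s'+1)$ \emph{overlap}, so the sign is locally constant and hence constant by connectedness; this bypasses the Lipschitz/parallel-transport step entirely, at the price of invoking Lemma~\ref{lemma: basic estimates from non-degeneracy}(ii) at every real $s$ (which the lemma as stated does allow) rather than only at integers. Both routes need the elementary fact that $\pm 1$ are a definite distance apart in $SU(2)$, and both yield the same constant $C_2(A)$ in the conclusion. Your variant is arguably a touch cleaner, but the two proofs are close in spirit: same key input, same ``fix a global sign'' strategy, differing only in how the sign is propagated.
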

\begin{proof}
For simplicity we suppose $m=0$.
By Lemma \ref{lemma: basic estimates from non-degeneracy} (ii), for every $k\in \mathbb{Z}$,
\begin{equation} \label{eq: |g+-1| leq C |d_A g|}
 \min_{\pm} \norm{g\pm 1}_{L^\infty(k<t<k+1)}\leq C_2
    \norm{d_Ag}_{L^\infty(k<t<k+1)}.
\end{equation}
Take a positive number $\varepsilon_1 = \varepsilon_1(A)$ satisfying 
$(C_2+1)\varepsilon_1<1$.
Suppose $\norm{d_A g}_{L^\infty(0<t<n)}\leq \varepsilon_1$.
We can also suppose 
\[ \norm{g-1}_{L^\infty(0<t<1)} \leq \norm{g+1}_{L^\infty(0<t<1)}\]
without loss of generality.
Then $\norm{g-1}_{L^\infty(0<t<1)}\leq C_2\varepsilon_1$. 
Since $|d_A g|\leq \varepsilon_1$ over $0\leq t\leq 2$, we have 
\[ \norm{g-1}_{L^\infty(1<t<2)} \leq (C_2+1)\varepsilon_1 < 1.\]
Then $\norm{g+1}_{L^\infty(1<t<2)} \geq 2-(C_2+1)\varepsilon_1 > 1$.
Hence 
\[ \norm{g-1}_{L^\infty(1<t<2)} < \norm{g+1}_{L^\infty(1<t<2)}.\]
In the same way, we can prove that for every $0 \leq k < n$
\[ \norm{g-1}_{L^\infty(k<t<k+1)} < \norm{g+1}_{L^\infty(k<t<k+1)}.\]
By (\ref{eq: |g+-1| leq C |d_A g|}),
\[ \norm{g-1}_{L^\infty(k<t<k+1)} \leq C_2
    \norm{d_Ag}_{L^\infty(k<t<k+1)}. \]
Thus $\norm{g-1}_{L^\infty(0<t<n)}\leq C_2\norm{d_A g}_{L^\infty(0<t<n)}$.
\end{proof}

Fix a positive integer $T=T(A)$ satisfying 
\begin{equation}  \label{eq: definition of T}
 \frac{10C_1 + 20\sqrt{C_1}}{T} < \frac{1}{4}.
\end{equation}
Here $C_1=C_1(A)$ is the positive constant introduced in Lemma 
\ref{lemma: basic estimates from non-degeneracy} (i).
For the later convenience (Lemma \ref{lemma: distortion of the deformation map}) we assume $T>3$.
For $\xi\in \Omega^i(\ad E)$ and integers $m \leq n$, we set 
\[ \norm{\xi}_m^n := \max_{m\leq k \leq n}\norm{\xi}_{L^2(kT<t<(k+1)T)}.\]

Let $d_A^*:\Omega^1(\ad E)\to \Omega^0(\ad E)$ be the formal adjoint of 
$d_A:\Omega^0(\ad E)\to \Omega^1(\ad E)$.
We set $\Delta_A u := d_A^*d_A u$ for $u\in \Omega^0(\ad E)$.

\begin{lemma}  \label{lemma: preliminary technical estimate for coulomb gauge}
Let $n\in \mathbb{Z}$ and $K\in\mathbb{Z}_{>0}$, and let $u\in \Omega^0(\ad E)$.
Then 
\[ \int_{nT<t<(n+1)T} |d_Au|^2d\vol
    \lesssim 2^{-K}\left(\norm{d_Au}_{n-K}^{n+K}\right)^2 + 
    \norm{\Delta_A u}_{n-K}^{n+K}\norm{u}_{n-K}^{n+K}.\]
\end{lemma}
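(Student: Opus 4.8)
The plan is to exploit a cutoff-function argument together with the fact that $\Delta_A$ controls $d_A$ after integration by parts, and to localize this estimate to slices of length $T$. First I would introduce a smooth cutoff $\varphi:\mathbb{R}\to[0,1]$ which equals $1$ on $(nT,(n+1)T)$ and is supported in $((n-K)T,(n+K+1)T)$, arranged so that $|\varphi'|\lesssim 1/T$ on each unit-length subinterval; think of $\varphi$ as built by interpolating linearly (then smoothing) between the value $1$ at the central block and the value $0$ at the two extreme blocks, so that $\varphi$ decays by roughly $1/K$ per block of length $T$. Writing $\psi = \varphi^2$, integration by parts gives
\[
 \int_X \psi\,|d_A u|^2 d\vol = \int_X \psi\,\langle \Delta_A u, u\rangle\, d\vol - \int_X \langle (d\psi)\,u, d_A u\rangle\, d\vol,
\]
where the second term comes from moving $d_A^*$ past the scalar function $\psi$ (here $d\psi$ is paired with $u\otimes d_A u$ via the metric). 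The left-hand side dominates $\int_{nT<t<(n+1)T}|d_A u|^2$, so it suffices to bound the two terms on the right.

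Next I would estimate each term by Cauchy--Schwarz, block by block. The first term is bounded by $\sum_{|k|\le K}\norm{\Delta_A u}_{L^2(\ldots)}\norm{u}_{L^2(\ldots)} \lesssim (2K+1)\,\norm{\Delta_A u}_{n-K}^{n+K}\,\norm{u}_{n-K}^{n+K}$; since the factor $(2K+1)$ is dominated by the growth we will eventually allow, and more importantly since $2^{-K}(\ldots)$ is what we are aiming for in the first summand, I expect the honest route is slightly different — one should not be wasteful here. The cleaner approach: choose $\varphi$ so that on the $j$-th block away from the center it has size comparable to $2^{-j}$ (geometric decay rather than linear), so that $|\varphi'|$ on that block is also $\lesssim 2^{-j}$, hence $|d\psi|\lesssim \varphi\cdot 2^{-j}$ there. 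Then the cross term is controlled by
\[
 \sum_{|k|\le K} 2^{-|k-n|}\,\norm{u}_{L^2(kT<t<(k+1)T)}\,\norm{d_A u}_{L^2(kT<t<(k+1)T)} \;\lesssim\; \norm{d_A u}_{n-K}^{n+K}\,\norm{u}_{n-K}^{n+K},
\]
and the ``boundary'' contribution — the part of $\int\psi\langle\Delta_A u,u\rangle$ and of the cross term that survives only because $\varphi$ is not genuinely compactly supported but merely small at the ends — carries the factor $2^{-K}$, giving the $2^{-K}(\norm{d_A u}_{n-K}^{n+K})^2$ summand after applying Lemma~\ref{lemma: basic estimates from non-degeneracy}~(i) (i.e. $\norm{u}\lesssim\norm{d_A u}$ on each unit block, hence on each $T$-block up to the constant $C_1$) to convert the stray $\norm{u}$ factor in that tail into a $\norm{d_A u}$ factor. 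Finally I would absorb any term of the form $\varepsilon\,(\norm{d_A u}_{n-K}^{n+K})^2$ arising from splitting the cross term by Young's inequality; this is legitimate provided the constant in front, which depends only on $C_1$ and on the geometry of $\varphi$ (hence only on $d$), is controlled — this is precisely why $T$ was chosen large in \eqref{eq: definition of T}, so that the $\sqrt{C_1}/T$-type constants are small.

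The main obstacle I anticipate is bookkeeping the two qualitatively different error sources and making sure they land in the two different summands of the claimed bound: the genuinely ``interior'' part of the commutator/cross term must be absorbed into $\norm{d_A u}\cdot\norm{u}$ (using $\sum 2^{-|k|}<\infty$ so no factor of $K$ appears), while only the exponentially small tail — where $\varphi$ has dropped to size $\sim 2^{-K}$ — is allowed to produce the $2^{-K}(\norm{d_A u})^2$ term, and there one is forced to use Lemma~\ref{lemma: basic estimates from non-degeneracy}~(i) to trade $\norm{u}$ for $\norm{d_A u}$ because at the extreme blocks we have no a priori control on $\norm{u}$ relative to the quantities on the right. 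Getting the cutoff profile right (geometric decay, smoothness, derivative bounds block by block) is the one delicate design choice; everything else is Cauchy--Schwarz and the non-degeneracy Poincaré inequality already established.
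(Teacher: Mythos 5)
Your basic setup --- multiply by a cutoff $\psi=\varphi^2$, integrate by parts to produce $\langle\Delta_A u,u\rangle$ and a cross term $\langle d\psi\cdot u, d_A u\rangle$, and absorb using the Poincar\'e inequality of Lemma~\ref{lemma: basic estimates from non-degeneracy}~(i) and the largeness of $T$ from \eqref{eq: definition of T} --- is in the right spirit, but as written the argument has a genuine gap, and the paper in fact takes a rather different route.

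The paper does \emph{not} use one long cutoff spanning $2K+1$ blocks. Instead it uses, for each $m$, a short cutoff supported in just three $T$-blocks $[(m-1)T,(m+2)T]$ with $\varphi\equiv 1$ on the middle block and $|\varphi'|,|\varphi''|\lesssim 1/T$. This yields a \emph{local} recurrence for $a_m:=\int_{mT<t<(m+1)T}|d_Au|^2$ of the form
\[
a_m \leq \tfrac14\bigl(a_{m-1}+a_{m+1}\bigr) + 3\,\norm{\Delta_A u}_{n-K}^{n+K}\norm{u}_{n-K}^{n+K},
\]
with the $\tfrac14$ coming exactly from \eqref{eq: definition of T}; then Sublemma~\ref{sublemma: elementary estimate on sequence}, a purely elementary discrete-maximum-principle argument on the sequence $\{a_m\}$, produces the geometric factor $2^{-(K-1)}$. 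The $2^{-K}$ in the conclusion is generated by iterating the recurrence, not by the shape of the cutoff. This modular structure keeps all constants universal and cleanly separates the analytic input (the integration by parts on three blocks) from the combinatorics (the sublemma).

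The concrete problem with your version is the bookkeeping at the absorption step. You bound the interior part of the cross term by passing immediately to sups over blocks,
\[
\sum_{|k|\le K} 2^{-|k|}\,\norm{u}_{L^2(\text{block }k)}\,\norm{d_A u}_{L^2(\text{block }k)}\ \lesssim\ \norm{u}_{n-K}^{n+K}\,\norm{d_A u}_{n-K}^{n+K},
\]
and then propose to control the resulting $\varepsilon\,(\norm{d_Au}_{n-K}^{n+K})^2$ by absorption into the left-hand side. But the quantity you can absorb into is $\int_X\psi\,|d_Au|^2$, and this does \emph{not} dominate $(\norm{d_Au}_{n-K}^{n+K})^2$: the latter is the maximum over all $2K+1$ blocks, while $\psi$ is exponentially small (of order $2^{-2|k|}$) on the outer blocks, so $\int\psi\,|d_Au|^2$ can be a factor $\sim 2^{2K}$ smaller than $(\norm{d_Au}_{n-K}^{n+K})^2$. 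The absorption as stated therefore does not close, and the conclusion you would actually reach is $a_0\lesssim \norm{\Delta_Au}\norm{u}+\norm{d_Au}\norm{u}$, which has an extra term not present in the lemma.

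The fix --- if you want to keep a single geometrically-decaying cutoff rather than follow the paper's recurrence --- is to \emph{not} pass to the sup in the cross term. Use $|\varphi'|\lesssim 2^{-|k|}/T$ (the factor $1/T$ is essential; $|\varphi'|\lesssim 2^{-|k|}$ is wasteful and loses the point of \eqref{eq: definition of T}), bound the cross term on block $k$ by $(1/T)\,2^{-2|k|}\,\norm{u}_k\norm{d_Au}_k$, apply Lemma~\ref{lemma: basic estimates from non-degeneracy}~(i) \emph{per block} to get $(\sqrt{C_1}/T)\,2^{-2|k|}\,\norm{d_Au}_k^2$, and observe that $\sum_k 2^{-2|k|}\norm{d_Au}_k^2$ is comparable (up to a universal factor) to $\int\psi\,|d_Au|^2$ itself. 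Only then is the absorption legitimate, with the small constant $\sqrt{C_1}/T<1/80$ from \eqref{eq: definition of T}. The outermost blocks $|k|=K$, where $\psi\sim 2^{-2K}$, then give the $2^{-K}(\norm{d_Au}_{n-K}^{n+K})^2$ contribution. With these repairs your approach would go through, but as written the central absorption step is not justified.
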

\begin{proof}
For simplicity, we suppose $n=0$.
Take any $m\in \mathbb{Z}$.
Let $\varphi:\mathbb{R}\to [0,1]$ be a cut-off such that 
$\supp(\varphi)\subset [(m-1)T,(m+2)T]$, $\varphi=1$ on $[mT,(m+1)T]$ and 
$|\varphi'|, |\varphi''| < 10/T$.
Then 
\[ \int_{mT<t< (m+1)T}|d_Au|^2 \leq \int_X |d_A(\varphi u)|^2 
    = \int_X \langle \Delta_A(\varphi u),\varphi u\rangle .\]
We have $\Delta_A(\varphi u) = \varphi\Delta_A u + \Delta \varphi \cdot u +*(*d\varphi\wedge d_Au
-d\varphi\wedge *d_Au)$.
\[ |\Delta_A(\varphi u)|\leq (10/T)|u| + (20/T)|d_A u| + |\Delta_A u|.\]
Since $\Delta_A(\varphi u) = \Delta_A u$ over $mT\leq t\leq (m+1)T$,
\begin{equation*}
 \begin{split}
  \int_{mT < t < (m+1)T}|d_A u|^2 \leq &\int_{\{(m-1)T<t<mT \text{ or } (m+1)T <t<(m+2)T\}} (10/T)|u|^2
  + (20/T)|d_A u||u|  \\
  &+ \int_{(m-1)T<t<(m+2)T} |\Delta_A u| |u|.
 \end{split}
\end{equation*}
Using Lemma \ref{lemma: basic estimates from non-degeneracy} (i), 
the right-hand-side is bounded by 
\[ \frac{10C_1+20\sqrt{C_1}}{T}\int_{\{(m-1)T<t<mT \text{ or } (m+1)T <t<(m+2)T\}} |d_A u|^2
    +  \int_{(m-1)T<t<(m+2)T} |\Delta_A u| |u| .\]
From (\ref{eq: definition of T}), this is bounded by 
\[ \frac{1}{4}\int_{\{(m-1)T<t<mT \text{ or } (m+1)T <t<(m+2)T\}} |d_A u|^2d\vol
    + 3\norm{\Delta_A u}_{m-1}^{m+1} \norm{u}_{m-1}^{m+1}.\]
We define a sequence $a_m$ $(-K\leq m \leq K)$ by 
\[ a_m := \int_{mT<t<(m+1)T}|d_A u|^2d\vol.\]
Then the above implies 
\[ a_m \leq \frac{a_{m-1}+a_{m+1}}{4} + 3\norm{\Delta_A u}_{-K}^{K} \norm{u}_{-K}^{K}\quad 
   (-K+1\leq m \leq K-1).\]
By applying Sublemma \ref{sublemma: elementary estimate on sequence} below to this relation, we get 
\[ a_0 \leq \frac{\max(a_K, a_{-K})}{2^{K-1}} + 18\norm{\Delta_A u}_{-K}^{K} \norm{u}_{-K}^{K}
   \leq \frac{1}{2^{K-1}}\left(\norm{d_Au}_{-K}^{K}\right)^2
    + 18\norm{\Delta_A u}_{-K}^{K} \norm{u}_{-K}^{K}.\]

\begin{sublemma} \label{sublemma: elementary estimate on sequence}
Let $K$ be a positive integer, and let $b\geq 0$ be a real number.
Let $\{a_m\}_{-K\leq m\leq K}$ be a sequence of non-negative real numbers satisfying 
\[ a_m \leq \frac{a_{m-1}+a_{m+1}}{4} + b   \quad (-K+1\leq m\leq K-1). \]
Then we have 
\[ a_0\leq \frac{\max(a_K,a_{-K})}{2^{K-1}} + 6b.\]
\end{sublemma}
\begin{proof}
Set $b_m := \max(a_{-m}, a_m)$ $(0\leq m\leq K)$.
We have $b_0\leq b_1/2+b$.
For $m\geq 1$, we have $b_m\leq (b_{m-1}+b_{m+1})/4+b$, i.e. 
$4b_m\leq b_{m-1}+b_{m+1}+4b$.
Hence, for $m\geq 1$, 
\[ 2(b_m-b_{m-1})\leq 2b_m-b_{m-1} \leq -2b_m+b_{m+1}+4b 
    \leq b_{m+1}-b_m +4b .\]
Thus $b_{m}-b_{m-1}\leq (b_{m+1}-b_m)/2 + 2b$ $(m\geq 1)$.
Using this inequality recursively, we get 
\[ b_1-b_0\leq \frac{b_K-b_{K-1}}{2^{K-1}} + 2b\left(1+\frac{1}{2}+\dots+\frac{1}{2^{K-2}}\right)
    \leq  \frac{b_K-b_{K-1}}{2^{K-1}} + 4b  .\]
On the other hand, $2b_0-b_1\leq 2b$.
Hence
\[ a_0= b_0\leq \frac{b_K-b_{K-1}}{2^{K-1}}+6b \leq \frac{b_K}{2^{K-1}} + 6b 
    =\frac{\max(a_K,a_{-K})}{2^{K-1}} + 6b .\]
\end{proof}
We have finished the proof of Lemma \ref{lemma: preliminary technical estimate for coulomb gauge}.
\end{proof}

The next proposition is the main result of this section.
Recall that we have supposed that $A$ is a non-degenerate ASD connection on $E$
with $\norm{F_A}_{\mathrm{op}} \leq d$.
\begin{proposition} \label{prop: Coulomb gauge}
For any $\tau>0$, there exist $\varepsilon_2=\varepsilon_2(A,\tau)>0$ and
$K=K(A,\tau)\in \mathbb{Z}_{>0}$ satisfying the following statement.

Let $n\in \mathbb{Z}$.
Let $a,b\in \Omega^1(\ad E)$ with $d_A^* a=d_A^* b=0$, and let $g:E\to E$ be a gauge transformation.
Set $\alpha := g(A+a)-(A+b)$.
If the $L^\infty$-norms of $a$, $b$ and $\alpha$ 
over $(n-K)T<t<(n+K+1)T$ are all less than  
$\varepsilon_2$, then 
\begin{equation} \label{eq: statement 1 of prop: Coulomb gauge}
 \norm{a-b}_{L^2(nT<t<(n+1)T)}\leq \tau \norm{a-b}_{n-K}^{n+K}
    + \sqrt{\norm{\alpha}_{n-K}^{n+K} + \norm{d_A^* \alpha}_{n-K}^{n+K}},
\end{equation}
\begin{equation} \label{eq: statement 2 of prop: Coulomb gauge}
  \min_{\pm}\norm{g\pm 1}_{L^2(nT<t<(n+1)T)} \lesssim_A
   \norm{\alpha}_{L^2(nT<t<(n+1)T)} + \norm{a-b}_{L^2(nT<t<(n+1)T)}.
\end{equation}
\end{proposition}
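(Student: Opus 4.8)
The plan is to analyze the gauge transformation $g$ via its "logarithm" over each unit subinterval and to exploit the identity $\alpha = g(A+a) - (A+b)$, which written out reads $\alpha = -(d_A g)g^{-1} + g a g^{-1} - b$ (using that $g(A+a) = A - (d_Ag)g^{-1} + gag^{-1}$). First I would observe that the hypotheses force $g$ to be $L^\infty$-close to $\pm 1$ on the whole range $(n-K)T<t<(n+K+1)T$: indeed $(d_Ag)g^{-1} = ga g^{-1} - b - \alpha$ has small $L^\infty$-norm (bounded by roughly $2\varepsilon_2$), so by Lemma \ref{lemma: estimate on gauge transformation from non-degeneracy} (taking $\varepsilon_2$ small enough that $3\varepsilon_2 \le \varepsilon_1$) we get $\min_\pm \norm{g\pm 1}_{L^\infty((n-K)T<t<(n+K+1)T)} \lesssim_A \varepsilon_2$. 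Normalizing so that it is $g-1$ that is small, write $g = e^u$ with $u\in\Omega^0(\ad E)$ and $|u|\lesssim |g-1|$, $|d_Au|\lesssim|d_Ag|$ pointwise on this range.

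Next I would extract the key elliptic estimate for $u$. From $\alpha = -(d_Ag)g^{-1} + gag^{-1} - b$ one computes $d_A^*\alpha$ and isolates the linear-in-$u$ part: modulo quadratic terms, $\alpha \approx -d_Au + (a-b)$, hence $d_A^*\alpha \approx -\Delta_A u + d_A^*(a-b) = -\Delta_A u$ using $d_A^*a = d_A^*b = 0$. So $\Delta_A u$ is controlled, up to quadratic errors of the form $O(\varepsilon_2)\cdot(|u|+|d_Au|+|a-b|)$, by $\alpha$ and $d_A^*\alpha$. Feeding this into Lemma \ref{lemma: preliminary technical estimate for coulomb gauge} applied to $u$ on the block $nT<t<(n+1)T$ with width $K$ gives
\[
\norm{d_Au}_{L^2(nT<t<(n+1)T)}^2 \lesssim 2^{-K}\left(\norm{d_Au}_{n-K}^{n+K}\right)^2 + \norm{\Delta_A u}_{n-K}^{n+K}\norm{u}_{n-K}^{n+K},
\]
and then bounding $\norm{\Delta_A u}$ by $\norm{\alpha}_{n-K}^{n+K} + \norm{d_A^*\alpha}_{n-K}^{n+K}$ plus $\varepsilon_2$ times the $L^2$-norms already present, and absorbing. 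Since $\norm{u}_{n-K}^{n+K}\lesssim_A 1$ (it is uniformly small) this yields a bound on $\norm{d_Au}_{L^2(nT<t<(n+1)T)}$, hence via Lemma \ref{lemma: basic estimates from non-degeneracy}(i) on $\norm{u}_{L^2}$ itself on each unit interval in the block, giving $\min_\pm\norm{g\pm1}_{L^2(nT<t<(n+1)T)}\lesssim_A \norm{d_Au}_{L^2}$, which after the estimates above is dominated by $\norm{\alpha}_{n-K}^{n+K} + \sqrt{\norm{\alpha}_{n-K}^{n+K}+\norm{d_A^*\alpha}_{n-K}^{n+K}}$-type quantities. To get the cleaner form (\ref{eq: statement 2 of prop: Coulomb gauge}) I would instead feed back $d_Au \approx (a-b) - \alpha$ directly: $\norm{d_Ag}_{L^2(nT<t<(n+1)T)} \lesssim \norm{\alpha}_{L^2(nT<t<(n+1)T)} + \norm{a-b}_{L^2(nT<t<(n+1)T)} + (\text{quadratic})$, and then Lemma \ref{lemma: basic estimates from non-degeneracy}(i) converts this to the bound on $\min_\pm\norm{g\pm1}_{L^2}$ on that single block, which is exactly (\ref{eq: statement 2 of prop: Coulomb gauge}).

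For (\ref{eq: statement 1 of prop: Coulomb gauge}), I would use $a - b = (d_Ag)g^{-1} + \alpha + (\text{quadratic in }u, a)$ and the fact that $a-b$ is divergence-free plus the just-derived control on $d_Ag \sim d_Au$: choosing $K = K(A,\tau)$ so that $2^{-K/2}\lesssim_A \tau$ and $\varepsilon_2$ small enough that all quadratic terms are absorbed, one gets $\norm{a-b}_{L^2(nT<t<(n+1)T)} \le \tau\norm{a-b}_{n-K}^{n+K} + \sqrt{\norm{\alpha}_{n-K}^{n+K}+\norm{d_A^*\alpha}_{n-K}^{n+K}}$, the square root appearing because the controlling quantity enters through the product term in Lemma \ref{lemma: preliminary technical estimate for coulomb gauge}. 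The main obstacle I anticipate is bookkeeping the nonlinear error terms: $g$ does not globally exponentiate, so I must work interval-by-interval with local logarithms $u$ and carefully check that the pointwise bounds $|gag^{-1}-a|\lesssim|u||a|$, $|(d_Ag)g^{-1}+d_Au|\lesssim |u||d_Au|$ etc. hold with constants depending only on $A$ (via the uniform smallness of $u$), and that these quadratic contributions are genuinely absorbable on the left after summing Lemma \ref{lemma: preliminary technical estimate for coulomb gauge}'s output over the block — this is where the choice of $\varepsilon_2$ depending on both $A$ and $\tau$ gets pinned down.
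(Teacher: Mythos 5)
Your proposal follows essentially the same path as the paper's proof: use the identity $d_A g = -\alpha g + ga - bg$ to get $\|d_Ag\|_{L^\infty}\lesssim\varepsilon_2$ on the block, invoke Lemma \ref{lemma: estimate on gauge transformation from non-degeneracy} to force $g$ near $\pm 1$ there, pass to $g=e^u$, use the Coulomb gauge condition to control $\Delta_A u$ by $d_A^*\alpha$ plus quadratic errors, and feed the result into Lemma \ref{lemma: preliminary technical estimate for coulomb gauge} so that $2^{-K}$ supplies the factor $\tau$ and the cross term $\|\Delta_A u\|\,\|u\|$ produces the square root. One small remark: your worry that ``$g$ does not globally exponentiate, so I must work interval-by-interval with local logarithms'' is unnecessary here — once Lemma \ref{lemma: estimate on gauge transformation from non-degeneracy} gives $\|g-1\|_{L^\infty}\lesssim_A\varepsilon_2\ll 1$ over the entire block $U=S^3\times((n-K)T,(n+K+1)T)$, a single section $u$ with $g=e^u$ exists on all of $U$, which is exactly what the paper uses.
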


\begin{proof}
For simplicity of the notations, we assume $n=0$.
Set $U:=S^3\times (-KT, KT+T)$.
We have $d_A g= -\alpha g+ga-bg$.
Then $|d_Ag|< 3\varepsilon_2$ over $U$.
We choose $\varepsilon_2$ so that $3\varepsilon_2\leq \varepsilon_1$ (the constant introduced in 
Lemma \ref{lemma: estimate on gauge transformation from non-degeneracy}).
Then by Lemma \ref{lemma: estimate on gauge transformation from non-degeneracy}, 
we can suppose
$\norm{g-1}_{L^\infty(U)}\lesssim_A \varepsilon_2\ll 1$.
So there is a section $u$ of $\Lambda^0(\ad E)$ over $U$
satisfying $g=e^u$ and $\norm{u}_{L^\infty(U)}\lesssim_A \varepsilon_2\ll 1$.
Then $2^{-1}|d_Au|\leq |d_Ag|\leq 2|d_Au|$ and $|g-1|\leq 2|u|$ over $U$.
By Lemma \ref{lemma: basic estimates from non-degeneracy} (i),
\begin{equation} \label{eq: |g-1| leq |d_A g|}
 \norm{g-1}_{-K}^{K}\leq 2\norm{u}_{-K}^K \leq 2\sqrt{C_1} \norm{d_A u}_{-K}^K 
   \leq 4\sqrt{C_1}\norm{d_A g}_{-K}^K.          
\end{equation}
We have 
\begin{equation}  \label{eq: d_A g = -alpha g + (g-1)a-b(g-1)+(a-b)}
   d_A g = -\alpha g + (g-1)a-b(g-1)+(a-b).
\end{equation}
Then 
\begin{equation*}
 \begin{split}
 \norm{d_Ag}_{-K}^K &\leq \norm{\alpha}_{-K}^K + \norm{g-1}_{-K}^K
 \left(\norm{a}_{L^\infty(U)} +\norm{b}_{L^\infty(U)}\right)
    +\norm{a-b}_{-K}^K \\
    &\leq  \norm{\alpha}_{-K}^K + \norm{a-b}_{-K}^K  + 8\varepsilon_2\sqrt{C_1} \norm{d_Ag}_{-K}^K.
 \end{split}
\end{equation*}
We choose $\varepsilon_2>0$ so small that $8\varepsilon_2\sqrt{C_1}\leq 1/2$.
Then 
\begin{equation} \label{eq: |d_Ag| leq |alpha|+|a-b|}
 \norm{d_A g}_{-K}^K\leq 2\left(\norm{\alpha}_{-K}^K + \norm{a-b}_{-K}^K\right).
\end{equation}
This and (\ref{eq: |g-1| leq |d_A g|}) shows
\begin{equation} \label{eq: revisit of statement 2 of prop: coulomb gauge}
 \norm{g-1}_{-K}^K \leq 8\sqrt{C_1}\left(\norm{\alpha}_{-K}^K + \norm{a-b}_{-K}^K\right).
\end{equation}
In the same way we get (\ref{eq: statement 2 of prop: Coulomb gauge}):
\[ \norm{g-1}_{L^2(0<t<T)} \leq 8\sqrt{C_1}(\norm{\alpha}_{L^2(0<t<T)}+\norm{a-b}_{L^2(0<t<T)}).\]

From (\ref{eq: d_A g = -alpha g + (g-1)a-b(g-1)+(a-b)}),
\begin{equation*}
 \begin{split}
 \norm{a-b}_{L^2(0<t<T)} &\leq \norm{d_Ag}_{L^2(0<t<T)} +\norm{\alpha}_{L^2(0<t<T)}
 + \norm{g-1}_{L^2(0<t<T)} \left(\norm{a}_{L^\infty(U)}+\norm{b}_{L^\infty(U)}\right)\\
 &\leq \norm{d_Ag}_{L^2(0<t<T)} +\norm{\alpha}_{L^2(0<t<T)} +2\varepsilon_2 \norm{g-1}_{L^2(0<t<T)}\\
 &\leq \norm{d_Ag}_{L^2(0<t<T)} + (1+16\varepsilon_2\sqrt{C_1})\norm{\alpha}_{L^2(0<t<T)}
  + 16\varepsilon_2\sqrt{C_1}\norm{a-b}_{L^2(0<t<T)}.
 \end{split}
\end{equation*}
Since $|d_Ag|\leq 2|d_Au|$ and $\varepsilon_2\ll 1$,
\begin{equation} \label{eq: |a-b| leq |d_Au|+|alpha|+varepsilon_2|a-b|}
  \norm{a-b}_{L^2(0<t<T)} \lesssim_A  \norm{d_Au}_{L^2(0<t<T)} 
  +  \norm{\alpha}_{L^2(0<t<T)}.
\end{equation}

We have the Coulomb gauge condition $d_A^*a=d_A^*b=0$. Therefore 
$\Delta_A g= -*d_A*d_Ag=-*d_A(-*\alpha g+g*a-*b g) =
  -(d_A^*\alpha)g -*(*\alpha\wedge d_Ag)  -*(d_Ag\wedge *a)-*(*b\wedge d_Ag)$.
By (\ref{eq: |d_Ag| leq |alpha|+|a-b|}),
\begin{equation} \label{eq: |Delta_A g| leq |d^*_A alpha| + varepsilon_2(|alpha|+|a-b|)}
 \begin{split}
  \norm{\Delta_A g}_{-K}^K &\leq \norm{d_A^*\alpha}_{-K}^K + 
  \left(\norm{\alpha}_{L^\infty(U)}+\norm{a}_{L^\infty(U)}+\norm{b}_{L^\infty(U)}\right)\norm{d_Ag}_{-K}^K\\
  &\leq  \norm{d_A^*\alpha}_{-K}^K + 6\varepsilon_2\left(\norm{\alpha}_{-K}^K + \norm{a-b}_{-K}^K\right).
 \end{split}
\end{equation}
$\Delta_A g= \sum_{n=0}^\infty\Delta_A(u^n/n!)$ and 
$|\Delta_A(u^n)|\leq n(n-1)|u|^{n-2}|d_Au|^2 + n|u|^{n-1}|\Delta_A u|$.
Hence 
\[ |\Delta_Ag-\Delta_Au|\leq e^{|u|}|d_Au|^2+(e^{|u|}-1)|\Delta_Au| 
    \lesssim \varepsilon_2 \left(|d_Ag|+|\Delta_Au|\right) \]
over $U$.
Here we have used $|u|\lesssim_A \varepsilon_2\ll 1$ and $|d_Au|\leq 2|d_Ag| < 6\varepsilon_2$ over $U$.
We choose $\varepsilon_2$ so small that $|\Delta_A u|\lesssim |\Delta_A g| + \varepsilon_2 |d_A g|$ over $U$.
By (\ref{eq: |d_Ag| leq |alpha|+|a-b|}) and (\ref{eq: |Delta_A g| leq |d^*_A alpha| + varepsilon_2(|alpha|+|a-b|)}),
\begin{equation} \label{eq: |Delta_A u| leq |alpha|+|d_A alpha|+varepsilon_2 |a-b|}
 \norm{\Delta_A u}_{-K}^K \lesssim \varepsilon_2 \norm{\alpha}_{-K}^K+\norm{d_A^*\alpha}_{-K}^K
    + \varepsilon_2\norm{a-b}_{-K}^K.
\end{equation}

From (\ref{eq: |a-b| leq |d_Au|+|alpha|+varepsilon_2|a-b|}),
Lemma \ref{lemma: preliminary technical estimate for coulomb gauge} and 
$\norm{\alpha}_{L^\infty(U)} < \varepsilon_2$,
\begin{equation*}
 \begin{split}
   \norm{a-b}_{L^2(0<t<T)}^2 & \lesssim_A \left(\norm{d_Au}_{L^2(0<t<T)}\right)^2
   + \left(\norm{\alpha}_{L^2(0<t<T)}\right)^2  \\
   & \lesssim_A 2^{-K}\left(\norm{d_Au}_{-K}^K\right)^2 + 
   \norm{\Delta_Au}_{-K}^K\norm{u}_{-K}^K 
   + \varepsilon_2 \norm{\alpha}_{-K}^K.
 \end{split}
\end{equation*}
From (\ref{eq: |d_Ag| leq |alpha|+|a-b|}), $|d_Au|\leq 2|d_Ag|$ on $U$ and 
$\norm{\alpha}_{L^\infty(U)}\leq \varepsilon_2$,
\[ \left(\norm{d_Au}_{-K}^K\right)^2 \lesssim \left(\norm{\alpha}_{-K}^K\right)^2
   + \left(\norm{a-b}_{-K}^K\right)^2
   \lesssim_A  \varepsilon_2\norm{\alpha}_{-K}^K
    + \left(\norm{a-b}_{-K}^K\right)^2 .\]
From (\ref{eq: revisit of statement 2 of prop: coulomb gauge}),
$\norm{u}_{-K}^K \lesssim_A \norm{\alpha}_{-K}^K+\norm{a-b}_{-K}^K$.
From (\ref{eq: |Delta_A u| leq |alpha|+|d_A alpha|+varepsilon_2 |a-b|}) and
$\norm{a}_{L^\infty(U)}, \norm{b}_{L^\infty(U)}, \norm{\alpha}_{L^\infty(U)}< \varepsilon_2$,
\begin{equation*}
 \begin{split}
  \norm{\Delta_A u}_{-K}^K\norm{u}_{-K}^K \lesssim_A &
  \left(\varepsilon_2\norm{\alpha}_{-K}^K 
  +\norm{d_A^*\alpha}_{-K}^K + \varepsilon_2\norm{a-b}_{-K}^K \right)\left(\norm{\alpha}_{-K}^K+\norm{a-b}_{-K}^K\right) \\
   \lesssim_A &\, \varepsilon_2\left(\norm{\alpha}_{-K}^K+\norm{d_A^*\alpha}_{-K}^K\right) 
     + \varepsilon_2\left(\norm{a-b}_{-K}^K\right)^2.
 \end{split}
\end{equation*}
(The strange square root in (\ref{eq: statement 1 of prop: Coulomb gauge}) comes from the term 
$\norm{d_A^*\alpha}_{-K}^K\norm{a-b}_{-K}^K$ in this estimate.)
Thus 
\[ \norm{a-b}_{L^2(0<t<T)}^2\lesssim_A 
    (\varepsilon_2+2^{-K})\left(\norm{a-b}_{-K}^K\right)^2
   + \varepsilon_2\left(\norm{\alpha}_{-K}^K+\norm{d_A^*\alpha}_{-K}^K\right).\]
We choose $K>0$ sufficiently large and $\varepsilon_2>0$ sufficiently small.
Then we get
\[  \norm{a-b}_{L^2(0<t<T)}^2\leq \tau^2\left(\norm{a-b}_{-K}^K\right)^2
    + \norm{\alpha}_{-K}^K+\norm{d_A^*\alpha}_{-K}^K.\]
\end{proof}

\begin{corollary} \label{cor: gauge fixing}
Suppose that $a,b\in \Omega^1(\ad E)$ satisfy $d_A^*a=d_A^*b=0$ and 
$\norm{a}_{L^\infty(X)}, \norm{b}_{L^\infty(X)}\leq \varepsilon_2(A,1/2)$
(the constant introduced in Proposition \ref{prop: Coulomb gauge} for $\tau =1/2$).
If a gauge transformation $g:E\to E$ satisfies $g(A+a)=A+b$, then 
$a=b$ and $g=\pm 1$.
\end{corollary}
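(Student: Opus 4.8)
The plan is to apply Proposition~\ref{prop: Coulomb gauge} with $\alpha = g(A+a)-(A+b) = 0$ and to exploit the fact that, since $\alpha$ vanishes identically, both $\norm{\alpha}$-type terms and $\norm{d_A^*\alpha}$-type terms on the right-hand sides of (\ref{eq: statement 1 of prop: Coulomb gauge}) and (\ref{eq: statement 2 of prop: Coulomb gauge}) disappear. First I would check the hypotheses of the proposition with $\tau = 1/2$: by assumption $\norm{a}_{L^\infty(X)}, \norm{b}_{L^\infty(X)} \leq \varepsilon_2(A,1/2)$, and $\norm{\alpha}_{L^\infty} = 0 < \varepsilon_2$, so for every $n \in \mathbb{Z}$ the $L^\infty$-norms of $a$, $b$, $\alpha$ over $(n-K)T < t < (n+K+1)T$ are all less than $\varepsilon_2$. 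Hence (\ref{eq: statement 1 of prop: Coulomb gauge}) gives, for every $n \in \mathbb{Z}$,
\[
 \norm{a-b}_{L^2(nT<t<(n+1)T)} \leq \tfrac{1}{2}\,\norm{a-b}_{n-K}^{n+K}.
\]

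The key step is then a maximum-type argument to conclude $a = b$ from this family of inequalities. Set $M := \norm{a-b}_{L^\infty(X, \text{via the } \norm{\cdot}\text{-blocks})}$, more precisely $M := \sup_{k \in \mathbb{Z}} \norm{a-b}_{L^2(kT<t<(k+1)T)}$; this is finite because $a$ and $b$ are globally bounded in $L^\infty$ and each block has finite volume. For any $n$, the quantity $\norm{a-b}_{n-K}^{n+K} = \max_{n-K\leq k\leq n+K}\norm{a-b}_{L^2(kT<t<(k+1)T)} \leq M$, so the displayed inequality yields $\norm{a-b}_{L^2(nT<t<(n+1)T)} \leq M/2$ for every $n$; taking the supremum over $n$ gives $M \leq M/2$, hence $M = 0$, i.e. $a = b$ on every block $(kT,(k+1)T)\times S^3$ and therefore $a = b$ on all of $X$.

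It remains to deduce $g = \pm 1$. With $a = b$ and $\alpha = 0$ we have $d_A g = g(A+a) - (A+a)\cdot(\text{matrix action})$; more directly, $g(A+a) = A+a$ means $g$ fixes the connection $A+a$, so $A+a$ is either reducible or $g = \pm 1$. Since $\norm{a}_{L^\infty(X)} \leq \varepsilon_2 \ll 1$ and $A$ is non-degenerate (hence non-flat), the perturbed connection $A+a$ is non-flat for $\varepsilon_2$ small, and $\norm{F_{A+a}}_{\mathrm{op}} < \infty$, so Lemma~\ref{lemma: non-flat implies irreducible} applies and forces $g = \pm 1$. Alternatively, and perhaps cleaner, one applies (\ref{eq: statement 2 of prop: Coulomb gauge}) with $\alpha = 0$ and $a = b$ to get $\min_\pm \norm{g\pm 1}_{L^2(nT<t<(n+1)T)} = 0$ for every $n$, so on each block $g \equiv 1$ or $g \equiv -1$; continuity of $g$ and connectedness of $X$ then pin down a single global sign.

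The main obstacle is the maximum argument: one must be careful that $M := \sup_k \norm{a-b}_{L^2(kT<t<(k+1)T)}$ is genuinely finite (this is where the global $L^\infty$ bounds on $a,b$ are essential — without them the supremum could be infinite and $M \leq M/2$ would be vacuous) and that the supremum is actually controlled, not merely a limsup, so that $M \leq M/2 \Rightarrow M = 0$ is legitimate. The finiteness follows since $\norm{a-b}_{L^2(kT<t<(k+1)T)}^2 \leq (\norm{a}_{L^\infty(X)}+\norm{b}_{L^\infty(X)})^2 \cdot \vol((0,T)\times S^3) \leq 4\varepsilon_2^2\, T\,\vol(S^3)$, uniformly in $k$. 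Everything else is a direct substitution into the already-proved Proposition~\ref{prop: Coulomb gauge} together with the irreducibility input from Lemma~\ref{lemma: non-flat implies irreducible}.
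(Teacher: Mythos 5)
Your proof is correct and follows the paper's argument exactly: apply Proposition~\ref{prop: Coulomb gauge} (\ref{eq: statement 1 of prop: Coulomb gauge}) with $\alpha=0$, take the supremum over blocks to obtain $M\leq M/2$, hence $M=0$ and $a=b$, then use (\ref{eq: statement 2 of prop: Coulomb gauge}) with $\alpha=0$, $a=b$ to conclude $g=\pm1$. One caution on your first suggested route for $g=\pm1$: Lemma~\ref{lemma: non-flat implies irreducible} applies to non-flat \emph{ASD} connections, but the corollary's hypotheses impose no ASD condition on $A+a$, nor is the non-flatness of $A+a$ immediate from smallness of $\norm{a}_{L^\infty}$ alone (one would need control on $d_A a$); your second route via (\ref{eq: statement 2 of prop: Coulomb gauge}) avoids this and is the one the paper uses.
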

\begin{proof}
For any $n\in \mathbb{Z}$, from Proposition \ref{prop: Coulomb gauge} 
(\ref{eq: statement 1 of prop: Coulomb gauge}),
\[ \norm{a-b}_{L^2(nT<t<(n+1)T)}\leq \frac{1}{2}\norm{a-b}_{n-K}^{n+K}
    \leq \frac{1}{2}\sup_{m\in \mathbb{Z}}\norm{a-b}_{L^2(mT<t<(m+1)T)}.\]
Hence 
\[ \sup_{m\in \mathbb{Z}}\norm{a-b}_{L^2(mT<t<(m+1)T)} \leq
   \frac{1}{2}\sup_{m\in \mathbb{Z}}\norm{a-b}_{L^2(mT<t<(m+1)T)} .\]
This implies $a=b$.
Then Proposition \ref{prop: Coulomb gauge} (\ref{eq: statement 2 of prop: Coulomb gauge})
shows $g=\pm 1$.
\end{proof}

\section{Parameter space of the deformation} \label{section: parameter space of the deformation}

For a connection $A$ on $E$, we set 
$D_A:=d_A^*+d_A^+:\Omega^1(\ad E)\to \Omega^0(\ad E)\oplus \Omega^+(\ad E)$.
Here $d_A^*$ is the formal adjoint of $d_A:\Omega^0(\ad E)\to \Omega^1(\ad E)$, and 
$d_A^+$ is the self-dual part of $d_A:\Omega^1(\ad E)\to \Omega^2(\ad E)$.
We define a linear space $H^1_A$ by 
\begin{equation} \label{eq: definition of H^1_A}
 H^1_A := \{a\in \Omega^1(\ad E)|\, D_A a =0, \, \norm{a}_{L^\infty(X)} < \infty\}.
\end{equation}
$(H^1_A, \norm{\cdot}_{L^\infty(X)})$ is a (possibly infinite dimensional) Banach space.
This space will be the parameter space of the deformation theory developed in the next section.
The main purpose of this section is to prove the following proposition:
\begin{proposition} \label{prop: main result of the study of H^1_A}
Let $A$ be a non-degenerate ASD connection on $E$ satisfying $\norm{F_A}_{\mathrm{op}}\leq d$.
Then for any interval $(\alpha,\beta)\subset \mathbb{R}$ of length $> 2$ 
there exists a finite dimensional linear subspace 
$V\subset H^1_A$ satisfying the following two conditions. 

\noindent 
(i) 
\[ \dim V\geq \frac{1}{\pi^2}\int_{\alpha<t<\beta}|F_A|^2d\vol - C_3(A). \]
Here $C_3(A)$ is a positive constant depending only on $A$.
The important point is that it is independent of the interval $(\alpha,\beta)$.

\noindent 
(ii) All $a\in V$ satisfy $\norm{a}_{L^\infty(X)}\leq 2\norm{a}_{L^\infty(\alpha<t<\beta)}$.
\end{proposition}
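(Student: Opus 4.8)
The plan is to produce the subspace $V$ by solving a finite-dimensional family of deformation equations localized near the slab $(\alpha,\beta)\times S^3$, and to count its dimension by the Atiyah--Singer index theorem applied to a manifold obtained by capping off a long cylinder. First I would fix a large integer $L$ (depending only on $A$, via the constant $T$ and the non-degeneracy constants from Lemma~\ref{lemma: basic estimates from non-degeneracy}) and consider the cylinder $Y_L=(\alpha-L,\beta+L)\times S^3$, or rather a closed $4$-manifold $\hat Y_L$ obtained by gluing two standard caps. On $\hat Y_L$ the operator $\hat D=d_A^*+d_A^+$ (with $A$ the restriction of our connection, extended over the caps in a fixed way) is Fredholm, and its index is computed by Atiyah--Singer to be
\[
\mathrm{ind}\,\hat D \;=\; 8c_2 - 3(1-b_1)(\hat Y_L)
\]
up to the bounded cap contributions; since $8c_2$ on $\hat Y_L$ equals $\frac{1}{8\pi^2}\int_{\hat Y_L}|F_A|^2\,d\vol$ plus cap terms, and the piece of this integral outside $(\alpha,\beta)\times S^3$ is also bounded by an $A$-dependent constant, we get $\dim\ker\hat D\geq \frac{1}{\pi^2}\int_{\alpha<t<\beta}|F_A|^2\,d\vol - C_3(A)$ (the factor $\tfrac{1}{\pi^2}$ versus $\tfrac{1}{8\pi^2}$ accounts for the $8$). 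The cokernel of $\hat D$ is controlled — indeed killed — by the Weitzenböck formula on $\hat Y_L$ together with the uniformly positive scalar curvature of $S^3$ and the smallness of $\norm{F_A}_{\mathrm{op}}$ relative to it; this is precisely where the metrical hypotheses enter, exactly as in Atiyah--Hitchin--Singer.

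Next I would transplant $\ker\hat D$ into $H^1_A$. For $a\in\ker\hat D$ I multiply by a cutoff $\varphi$ supported in $(\alpha-L,\beta+L)\times S^3$ and equal to $1$ on $(\alpha-L/2,\beta+L/2)\times S^3$, so that $\varphi a$ is a global $1$-form on $X$ with $\norm{D_A(\varphi a)}$ supported in the two collars where $|d\varphi|\lesssim 1/L$. Solving $D_A D_A^* \xi = -D_A(\varphi a)$ over $X$ — using the uniform invertibility of $D_A D_A^*$ on $L^\infty$ that follows from Lemma~\ref{lemma: basic estimates from non-degeneracy} and the Weitzenböck estimate (this is the non-degeneracy input, giving a uniform Green's operator with exponential decay away from the support of the source) — I obtain a correction $D_A^*\xi$ whose $L^\infty$-norm is $\lesssim_A (1/L)\norm{a}_{L^\infty(\text{collars})}$, and $\Phi(a):=\varphi a + D_A^*\xi\in H^1_A$. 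The exponential decay of the Green's operator means $D_A^*\xi$ is exponentially small in $L$ outside the collars; choosing $L$ large enough makes $\Phi$ injective on $\ker\hat D$, so $V:=\Phi(\ker\hat D)$ has the dimension required in (i).

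It remains to verify the localization estimate (ii), that every $a\in V$ obeys $\norm{a}_{L^\infty(X)}\leq 2\norm{a}_{L^\infty(\alpha<t<\beta)}$. The idea is that an $L^\infty$-bounded solution of $D_A a=0$ on a half-cylinder satisfies a maximum-principle-type decay: away from a fixed-length region its sup-norm is controlled by its sup-norm on that region, again via the Weitzenböck formula (the Laplacian-type operator $D_A^* D_A$ applied to $|a|^2$ picks up the positive curvature term, so $|a|$ is a subsolution outside the zero set of $F_A$) combined with Lemma~\ref{lemma: basic estimates from non-degeneracy}(i). I would first prove that $\norm{a}_{L^\infty(X)}\le C(A)\,\norm{a}_{L^\infty(\alpha-\ell<t<\beta+\ell)}$ for a fixed $\ell=\ell(A)$, then absorb the constant by shrinking the cutoff region: take the interval in the index computation to be $(\alpha,\beta)$ itself but do the cutoff construction on a still longer cylinder so that the collars sit far out, where $|a|$ has already decayed by a definite factor; iterating the decay a bounded number of times turns $C(A)$ into $2$. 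The main obstacle I expect is making the Green's operator for $D_A D_A^*$ on the \emph{infinite} cylinder genuinely uniform with exponential off-diagonal decay — i.e. upgrading Lemma~\ref{lemma: basic estimates from non-degeneracy} from a slab estimate to a global resolvent estimate — and carefully tracking that all constants depend only on $A$ and not on the interval; once that analytic package is in place, the index count and the cutoff surgery are routine.
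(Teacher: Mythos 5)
Your index count for (i) and the transplant-by-cutoff-and-correction are both sound and are close in spirit to the paper's argument (the paper glues the ends of the slab $[\alpha,\beta]\times S^3$ into $(\mathbb{R}/(\beta-\alpha)\mathbb{Z})\times S^3$ rather than capping with balls, which is a minor variant, and the error is killed exactly via the operators $d_A^*d_A$ and $d_A^+d_A^*$ as in Lemma~\ref{lemma: killing the error term} --- these two are the block-diagonal pieces of your $D_AD_A^*$, since $d_A^+d_A=F_A^+\cdot=0$).

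The genuine gap is in your argument for (ii). You assert that an $L^\infty$-bounded solution of $D_A a = 0$ on a (half-)cylinder ``satisfies a maximum-principle-type decay'' via the Weitzenb\"{o}ck formula, so that $|a|$ is a subsolution of a coercive operator and decays away from where it is controlled. This is false. The sign in the Weitzenb\"{o}ck formula works for you on $\Omega^+$ (giving $d_A^+d_A^* = \tfrac12(\nabla_A^*\nabla_A + S/3)$ with $S>0$, hence a spectral gap and decay for the $\Omega^+$ correction), but \emph{not} on $\Omega^1$: for the relevant operator $D_A^*D_A$ on $\Omega^1$ the curvature endomorphism does not have a favorable sign, and in fact $H^1_A$ can contain bounded solutions that decay not at all --- e.g.\ for a non-degenerate periodic ASD connection the translation-generated element $\partial_t\lrcorner F_A$ is bounded and periodic. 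So ``$|a|$ has already decayed by a definite factor'' in the collars is simply not available, and the ``iterate the decay to turn $C(A)$ into $2$'' step has nothing to iterate. As stated, your $\Phi(a_0)=\varphi a_0 + D_A^*\xi$ is concentrated in $(\alpha-L,\beta+L)$, not in $(\alpha,\beta)$, and you have no mechanism to shrink the window.

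The paper achieves (ii) by a completely different and essentially combinatorial device: inside $H^1_{A'}$ on the compact quotient it restricts to the subspace $V$ of sections vanishing at a fine $\tau$-net of $N\lesssim\tau^{-4}$ points in the gluing collar $\Omega$. The uniform gradient bound (elliptic regularity with the uniformly controlled connection matrix) then forces $|a|\lesssim\tau\norm{a}_{L^\infty}$ on all of $\Omega$ for every $a$ in this subspace, so the $L^\infty$-max of $a$ is attained outside the collar, and the cutoff therefore costs nothing. Requiring vanishing at $N$ points only drops the dimension by $12N$, which is absorbed into $C_3(A)$. This vanishing trick is the crucial idea you are missing; with it the ``decay'' is manufactured by fiat on the chosen subspace rather than derived from a (nonexistent) maximum principle for $\Omega^1$.
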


The following is a preliminary version of Proposition \ref{prop: main result of the study of H^1_A}:
\begin{proposition} \label{prop: preliminary result on the study of H^1_A}
Let $A$ be an ASD connection on $E$ satisfying $\norm{F_A}_{\mathrm{op}}\leq d$.
For any $\varepsilon>0$ and any interval $(\alpha, \beta) \subset \mathbb{R}$ of length $>2$, there exists
a finite dimensional linear subspace $W\subset \Omega^1(\ad E)$ such that 

\noindent
(i) 
\[ \dim W\geq \frac{1}{\pi^2}\int_{\alpha<t<\beta}|F_A|^2d\vol -C(\varepsilon).\]

\noindent 
(ii) All $a\in W$ satisfy $\supp (a) \subset (\alpha,\beta)\times S^3$.

\noindent 
(iii) All $a\in W$ satisfy $\supp(D_A a)\subset (\alpha,\alpha+1)\times S^3\cup  (\beta-1,\beta)\times S^3$
and $\norm{D_Aa}_{L^\infty(X)}\leq \varepsilon \norm{a}_{L^\infty(X)}$.
\end{proposition}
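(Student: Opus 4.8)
The plan is to obtain $W$ from genuine solutions of the linearized anti-self-duality equation $D_Aa=0$ over a compact piece of the cylinder, and then to cut them off so that they become supported in $(\alpha,\beta)\times S^3$ at the price of a small, boundary-localized error. This is the Yang--Mills counterpart of the classical ``Riemann--Roch plus cut-off'' construction: the Atiyah--Singer index theorem supplies the dimension, just as \cite[Theorem 6.1]{A-H-S} supplies $8k-3(1-b_1)$ in the closed case, and the coefficient $1/\pi^2=8/(8\pi^2)$ in (i) is exactly the coefficient of the instanton number there.

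First I would carry out the dimension count. Extend $A|_{[\alpha,\beta]\times S^3}$ to a connection $\hat A$ on a principal $SU(2)$-bundle $\hat E$ over the closed $4$-manifold $\hat X$ obtained by capping $[\alpha,\beta]\times S^3$ with two $4$-balls. Such an extension exists because $S^3$ is simply connected, and, since $\norm{F_A}_{\mathrm{op}}\leq d$ forces $A$ to be bounded in $C^\infty$ near the seams in terms of $d$ (via Uhlenbeck gauges), it can be arranged so that $\int_{\hat X\setminus([\alpha,\beta]\times S^3)}|F_{\hat A}|^2\,d\vol\lesssim 1$. Now $\hat X\cong S^4$, so $b_1(\hat X)=b^+(\hat X)=0$, and the index of $d_{\hat A}^*\oplus d_{\hat A}^+$ equals $8c_2(\hat E)-3$; because $A$ is ASD on $[\alpha,\beta]\times S^3$, Chern--Weil gives
\[ \dim\ker\left(d_{\hat A}^*\oplus d_{\hat A}^+\right)\geq 8c_2(\hat E)-3\geq\frac{1}{\pi^2}\int_{\alpha<t<\beta}|F_A|^2\,d\vol-C(d). \]
(Alternatively one can stay on the slab $[\alpha,\beta]\times S^3$ and use the Atiyah--Patodi--Singer index theorem, the $\eta$- and $h$-contributions of the boundary $3$-spheres being bounded in terms of $d$.) Write $K$ for this kernel; by unique continuation, restriction to $[\alpha,\beta]\times S^3$ embeds $K$ into the space of solutions of $D_Aa=0$ there, and every $a\in K$ is smooth.

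Next comes the cut-off and the reduction. Pick $\chi\colon\mathbb{R}\to[0,1]$ supported in $(\alpha,\beta)$, equal to $1$ on $[\alpha+1,\beta-1]$, with $|\chi'|\leq 2$. For $a\in K$ the form $\chi a$ is supported in $(\alpha,\beta)\times S^3$, and $D_A(\chi a)=\sigma_{D_A}(d\chi)\,a$ is supported in $(\alpha,\alpha+1)\times S^3\cup(\beta-1,\beta)\times S^3$ with $|D_A(\chi a)|\lesssim|a|$ there, which already yields (ii) and the support part of (iii). The delicate point is to force $\norm{D_A(\chi a)}_{L^\infty(X)}\leq\varepsilon\norm{\chi a}_{L^\infty(X)}$, i.e. to make $a$ small on the two unit-length ``edge slabs'' relative to its size on $[\alpha+1,\beta-1]\times S^3$. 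For this I would enlarge the compact piece by collars of length $L=L(\varepsilon,d)$ before applying the index theorem, and then discard from $K$ the components carried, near each end, by the finitely many near-zero eigenmodes of the tangential operator of the ASD complex on the $3$-sphere: away from those modes a solution of $D_Aa=0$ decays exponentially as one moves inward along the collar, at a rate bounded below by the spectral gap, so once $L$ is large enough (depending only on $\varepsilon,d$) the edge-slab norm drops below $\varepsilon$ times the deep-interior norm. Crucially, the number of eigenvalues of that tangential operator below any fixed threshold is bounded in terms of $d$ alone, so the discarded subspace has dimension $\leq C_1(\varepsilon,d)$, \emph{independently of the interval}. Setting $W:=\chi\mathcal V$ for the surviving subspace $\mathcal V\subset K$ --- the map $a\mapsto\chi a$ being injective on $\mathcal V$, again by unique continuation --- gives (ii), (iii), and $\dim W=\dim\mathcal V\geq\dim K-C_1(\varepsilon,d)\geq\frac{1}{\pi^2}\int_{\alpha<t<\beta}|F_A|^2\,d\vol-C(\varepsilon)$. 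The step I expect to be the main obstacle is exactly this last one: controlling the cut-off error by passing to a subspace of bounded codimension, with the codimension bound uniform in $(\alpha,\beta)$. This is where $\norm{F_A}_{\mathrm{op}}\leq d$ is genuinely needed --- it makes the auxiliary elliptic estimates, the spectral-gap lower bounds, and the eigenvalue counts on the unit slabs all uniform --- and it is the Yang--Mills analogue of the bounded-codimension phenomenon exploited for Brody curves in \cite{Matsuo-Tsukamoto Brody curve}; by comparison, the index computation and the bookkeeping of the cut-off are routine.
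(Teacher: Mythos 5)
Your overall architecture matches the paper's: produce a compactification of the slab, invoke the Atiyah--Singer index theorem to get a finite-dimensional solution space of size $\frac{1}{\pi^2}\int|F_A|^2 - O(1)$, then cut off by a function supported in $(\alpha,\beta)$ and pass to a bounded-codimension subspace to control the error $D_A(\chi a)$. The choice of compactification differs cosmetically: you cap $[\alpha,\beta]\times S^3$ by two $4$-balls to obtain $S^4$ and use $8c_2 - 3$, whereas the paper instead sets $A' := \psi A$ (equal to $A$ on $[\alpha+1,\beta-1]$ and zero near the ends), glues $E|_{\alpha<t<\beta}$ along a bounded trivialization to form a bundle over $(\mathbb{R}/(\beta-\alpha)\mathbb{Z})\times S^3$, and uses $8c_2$ there; either route supplies the dimension count under the uniform $C^k$-bound coming from $\norm{F_A}_{\mathrm{op}}\leq d$.

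Where your proposal genuinely diverges from the paper --- and where it breaks down --- is the step you yourself flag as the main obstacle: forcing $\norm{a}_{L^\infty(\text{edge slabs})}\leq\varepsilon\norm{a}_{L^\infty}$ on a subspace of codimension $C(\varepsilon,d)$ that is uniform in $(\alpha,\beta)$. You propose adding long collars and using exponential decay away from the low eigenmodes of ``the tangential operator of the ASD complex on $S^3$.'' This does not work as stated. The operator $D_A$ over the collar has the form $\partial_t + L_t$ only schematically: $A$ is an arbitrary ASD connection subject to the curvature bound, not cylindrical, so the tangential operator $L_t$ varies with $t$, and there is no canonical $t$-independent eigenmode decomposition along which to ``discard the near-zero components.'' The exponential-decay mechanism (as in Floer theory or APS boundary value problems) is available when the connection is asymptotically cylindrical near the end and one considers solutions on a \emph{half-infinite} cylinder where only the decaying modes survive; over a capped finite manifold with a general interior connection, a typical element of your kernel $K$ is a superposition of modes growing in both directions, and there is no reason for it to be small in the middle of the collar after projecting out any bounded-dimensional subspace determined by spectral data. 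In short, the codimension-bound you need is real, but the decay argument you offer for it requires a product structure that the hypotheses don't provide.

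The paper obtains exactly the same codimension bound by an entirely elementary mechanism which avoids spectral theory: first it proves the gradient bound $\norm{\nabla_{A'}a}_{L^\infty}\lesssim\norm{a}_{L^\infty}$ for $a$ in the finite-dimensional kernel over the compactified manifold (a direct consequence of interior elliptic estimates and the uniform $C^k$-bound on the connection matrix), then it takes a $\tau$-net $x_1,\dots,x_N$ ($N\lesssim\tau^{-4}$) in the \emph{fixed-width} boundary region $\Omega$ and passes to the kernel $V$ of the evaluation map $a\mapsto(a(x_i))_i$. For $a\in V$ and $x\in\Omega$, picking a nearby net point gives $|a(x)|\leq\tau\norm{\nabla a}_{L^\infty}\lesssim\tau\norm{a}_{L^\infty}$, which is all one needs; the codimension $12N$ depends only on $\tau$ (hence on $\varepsilon$ and $d$), not on $(\alpha,\beta)$. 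This ``evaluate at a net of points'' trick is the key idea you are missing, and it is what actually cashes in the bounded-codimension phenomenon you correctly anticipated.
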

\begin{proof}
From the compactness of $\moduli_d$, there is a bundle trivialization $g$ of $E$ over 
$U:=\{\alpha-1<t<\alpha+1\}\cup \{\beta-1<t<\beta+1\} \subset X$ such that the connection matrix 
$g(A)$ satisfies 
\[ \norm{g(A)}_{C^k(U)} \lesssim C(k) \quad (\forall k\geq 0).\]
Let $\psi: \mathbb{R}\to [0,1]$ be a cut-off function such that 
$\psi =1$ over a small neighborhood of $[\alpha+1, \beta-1]$, 
$\supp(\psi)\subset (\alpha+1/2,\beta-1/2)$ and $|d\psi|\leq 4$.
Define a connection $A'$ over $(\alpha-1,\beta+1)\times S^3$ by $A' := \psi A$.
(The precise definition is as follows: $A'$ is equal to $A$ on a small neighborhood of 
$[\alpha+1,\beta-1]\times S^3$, and 
it is equal to $g^{-1}(\psi g(A))$ over $U$.)
We have $F(A')=\psi F(A)+d\psi\wedge A +(\psi^2-\psi)A^2$.
\[ |F(A')|\leq d + 4|A| + |A^2| \lesssim 1.\]

Set $X':=(\mathbb{R}/(\beta-\alpha)\mathbb{Z})\times S^3$, and let $\pi:X\to X'$ be the natural projection.
We define a principal $SU(2)$ bundle $E'$ on $X'$ as follows:
We identify the region $\{\alpha<t<\beta\}\subset X$ with its projection $\pi\{\alpha<t<\beta\}$ and set 
\[ E' := E|_{\alpha<t<\beta} \cup (\pi(U)\times SU(2)),\]
where we glue the two terms of the right-hand-side by using the trivialization $g$.
We can naturally identify the connection $A'$ with a connection on $E'$ (also denoted by $A'$).
\[ c_2(E') = \frac{1}{8\pi^2} \int_{X'}\mathrm{tr}(F_{A'}^2)
    \geq \frac{1}{8\pi^2}\int_{\alpha<t<\beta}|F_A|^2d\vol  -\const.\]
Let $H^1_{A'}$ be the linear space of $a\in \Omega^1_{X'}(\ad E')$ satisfying $D_{A'}a = d_{A'}^*a+d_{A'}^+a=0$.
From the Atiyah--Singer index theorem,
\[ \dim H^1_{A'} \geq  8c_2(E') 
    \geq \frac{1}{\pi^2}\int_{\alpha<t<\beta}|F_A|^2d\vol  -\const.\]
\begin{lemma}\label{lemma: in prop: preliminary result on the study of H^1_A}
All $a\in H^1_{A'}$ satisfy
\[ \norm{\nabla_{A'}a}_{L^\infty(X')} \lesssim \norm{a}_{L^\infty(X')}.\]
\end{lemma}    
\begin{proof}
Take any $\gamma\in \mathbb{R}$.
From the construction, we can choose a connection matrix of $A'$ over $\pi\{\gamma<t<\gamma+1\}$ so that 
\[ \norm{A'}_{C^k(\pi\{\gamma<t<\gamma+1\})} \lesssim C(k) \quad (\forall k\geq 0).\]
Then the standard elliptic regularity theory 
(Gilbarg--Trudinger \cite[Theorem 9.11]{Gilbarg-Trudinger}) shows 
\[ \norm{\nabla_{A'} a}_{L^\infty(\pi\{\gamma+1/4<t<\gamma+3/4\})} \lesssim 
   \norm{a}_{L^\infty (\pi\{\gamma<t<\gamma+1\})}.\]
A similar argument will be also used in the proof of Lemma \ref{lemma: L^infty estimate of nabla_B phi}.
\end{proof}
Set $\Omega := \pi(U) \subset X'$.
Let $\tau = \tau(\varepsilon)>0$ be a small number which will be fixed later. 
Take points $x_1, x_2,\dots,x_N$ $(N \lesssim 1/\tau^4)$ in $\Omega$ 
such that for any $x\in \Omega$ there is some $x_i$ satisfying 
$d(x, x_i)\leq \tau$.
Let $V$ be the kernel of the following linear map:
\[ H^1_{A'}\to \bigoplus_{i=1}^N (\Lambda^1(\ad E'))_{x_i}, \quad a\mapsto (a(x_i))_{i=1}^N.\]
We have 
\begin{equation*}
   \dim V\geq \dim H^1_{A'}-12 N\geq \frac{1}{\pi^2}\int_{\alpha<t<\beta}|F_A|^2d\vol -\const -12N.
\end{equation*}

Take any $a\in V$ and $x\in \Omega$.
Choose $x_i$ satisfying $d(x,x_i)\leq \tau$.
From Lemma \ref{lemma: in prop: preliminary result on the study of H^1_A} and $a(x_i)=0$,
\[ |a(x)|\leq \tau \norm{\nabla_{A'} a}_{L^\infty(X')}\lesssim \tau \norm{a}_{L^\infty(X')}.\]
We can choose $\tau >0$ so that
the maximum of $|a|$ is attained at a point in $X'\setminus \Omega$.
For $a\in V$, we define $\tilde{a}\in \Omega^1(\ad E)$ over $X$ by $\tilde{a} :=\psi a$.
(The precise definition is as follows:
We identify the region $\{\alpha<t<\beta\}$ with its projection in $X'$.
$\tilde{a}$ is equal to $\psi a$ over $\alpha<t<\beta$, and it is equal to $0$ outside of $\supp (\psi)$.)
Set $W:=\{\tilde{a}|\, a\in V\} \subset \Omega^1(\ad E)$.
$W$ satisfies the condition (ii) in the statement.
We have $\norm{\tilde{a}}_{L^\infty(X)}=\norm{a}_{L^\infty(X')}$ 
because the maximum of $|a|$ is attained at a point in $X'\setminus \Omega$. Hence
\[ \dim W = \dim V \geq \frac{1}{\pi^2}\int_{\alpha<t<\beta}|F_A|^2d\vol -\const -12N.\]

We have 
\[ D_A \tilde{a} = (A-A')*\tilde{a} + D_{A'}(\psi a) = (A-A')*\tilde{a} + (d\psi)*a.\]
Here $*$ are algebraic multiplications.
$D_{A}\tilde{a}$ is supported in $\{\alpha<t<\alpha+1\}\cup\{\beta-1<t<\beta\}$ and 
\[ \norm{D_A\tilde{a}}_{L^\infty(X)} \lesssim 
  \norm{a}_{L^\infty((\alpha,\alpha+1)\times S^3 \cup (\beta-1,\beta)\times S^3)} \lesssim
   \tau \norm{\tilde{a}}_{L^\infty(X)}.\] 
We can choose $\tau = \tau(\varepsilon)>0$ so that 
$\norm{D_A\tilde{a}}_{L^\infty(X)} \leq \varepsilon \norm{\tilde{a}}_{L^\infty(X)}$. 
Then $W$ satisfies the conditions (i), (ii), (iii) in the statement.
\end{proof}

\begin{lemma} \label{lemma: killing the error term}
Let $\alpha<\beta$.
Let $A$ be an ASD connection on $E$ satisfying $\norm{F_A}_{\mathrm{op}}\leq d$.

\noindent 
(i) If $A$ is non-degenerate, then there is a linear map 
\[ \{u\in \Omega^0(\ad E)|\, \supp(u) \subset (\alpha,\alpha+1)\times S^3 \cup (\beta-1,\beta)\times S^3\}
    \to \Omega^0(\ad E), \quad u\mapsto v, \]
satisfying
\[ d^*_A d_A v=u, \quad 
   \norm{v}_{L^\infty(X)} +\norm{d_Av}_{L^\infty(X)} \lesssim_A \norm{u}_{L^\infty(X)}.\]
   
\noindent 
(ii) There is a linear map 
\[ \{\xi\in \Omega^+(\ad E)|\, \supp(\xi)\subset (\alpha,\alpha+1)\times S^3 \cup (\beta-1,\beta)\times S^3\}
   \to \Omega^+(\ad E), \quad \xi \mapsto \eta,\]
satisfying
\[ d_A^+d_A^*\eta =\xi, \quad \norm{\eta}_{L^\infty(X)}+\norm{\nabla_A\eta}_{L^\infty(X)}
   \lesssim \norm{\xi}_{L^\infty(X)}.\]  
The statement (ii) does not require the non-degeneracy of $A$. 
\end{lemma}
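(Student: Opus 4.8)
The plan is to solve both equations by the same two-step scheme. First I would produce an $L^2_1$-solution over the non-compact $X$ by a Lax--Milgram argument, the coercivity coming from a Poincar\'{e}-type inequality on unit slabs; then I would bootstrap to the stated $L^\infty$ bounds by interior elliptic regularity applied slab by slab, the elliptic constants being made uniform --- as in the proof of Lemma \ref{lemma: in prop: preliminary result on the study of H^1_A} --- by gauge-fixing $A$ on each unit slab via the compactness of $\moduli_d$. The only genuine difference between (i) and (ii) is the source of the coercivity, and this is exactly what produces the different constants ($\lesssim_A$ versus $\lesssim$).

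For (i): let $\mathcal{H}$ be the completion of the compactly supported sections of $\ad E$ in the norm $\norm{d_A\,\cdot\,}_{L^2(X)}$; this is a norm because a compactly supported section annihilated by $d_A$ vanishes. The bilinear form $(\phi,\psi)\mapsto\int_X\langle d_A\phi,d_A\psi\rangle\,d\vol$ is coercive on $\mathcal{H}$ by construction, and because $u$ is supported in the two unit slabs $S:=(\alpha,\alpha+1)\times S^3$, $S':=(\beta-1,\beta)\times S^3$, Lemma \ref{lemma: basic estimates from non-degeneracy} (i) gives
\[ \left|\int_X\langle u,\psi\rangle\,d\vol\right|\leq\norm{u}_{L^2(S\cup S')}\norm{\psi}_{L^2(S\cup S')}\lesssim_A\norm{u}_{L^2(X)}\norm{d_A\psi}_{L^2(X)}\qquad(\psi\in\mathcal{H}), \]
so $\psi\mapsto\int_X\langle u,\psi\rangle$ is a bounded functional on $\mathcal{H}$. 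By the Riesz representation theorem there is a unique $v\in\mathcal{H}$ with $\int_X\langle d_A v,d_A\psi\rangle=\int_X\langle u,\psi\rangle$ for all $\psi$, i.e. $d_A^*d_A v=u$ weakly; $v$ is smooth by elliptic regularity, the assignment $u\mapsto v$ is linear by uniqueness, and $\norm{d_A v}_{L^2(X)}\lesssim_A\norm{u}_{L^2(X)}\lesssim\norm{u}_{L^\infty(X)}$ (the last step since $S\cup S'$ has bounded volume). For the $L^\infty$ bounds, fix a unit slab $\sigma:=(s,s+1)\times S^3$: Lemma \ref{lemma: basic estimates from non-degeneracy} (i) gives $\norm{v}_{L^2(\sigma)}\lesssim_A\norm{d_A v}_{L^2(\sigma)}\leq\norm{d_A v}_{L^2(X)}\lesssim_A\norm{u}_{L^\infty(X)}$, and a standard elliptic bootstrap for $d_A^*d_A v=u$ (using $u\in L^\infty\subset L^p$ for all $p$, in a gauge on $(s-1,s+2)\times S^3$ in which the connection matrix of $A$ is bounded in every $C^k$) gives $\norm{v}_{L^\infty(\sigma)}+\norm{d_A v}_{L^\infty(\sigma)}\lesssim_A\norm{v}_{L^2((s-1,s+2)\times S^3)}+\norm{u}_{L^\infty(X)}\lesssim_A\norm{u}_{L^\infty(X)}$. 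Taking the supremum over $s$ proves (i).

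For (ii): the operator $d_A^+d_A^*$ on $\Omega^+(\ad E)$ is the degree-$2$ Laplacian of the elliptic complex $\Omega^0(\ad E)\xrightarrow{d_A}\Omega^1(\ad E)\xrightarrow{d_A^+}\Omega^+(\ad E)$ (a complex since $A$ is ASD, so $d_A^+d_A=0$), hence elliptic. The Weitzenb\"{o}ck formula behind AHS's Theorem 6.1 gives, for $\eta\in\Omega^+(\ad E)$,
\[ 2\,d_A^+d_A^*\eta=\nabla_A^*\nabla_A\eta+\tfrac{s}{3}\,\eta, \]
the self-dual Weyl curvature of $X=\mathbb{R}\times S^3$ and the $F_A^+$-term both vanishing. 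Hence the bilinear form $(\eta,\zeta)\mapsto\int_X\bigl(\langle\nabla_A\eta,\nabla_A\zeta\rangle+\tfrac{s}{3}\langle\eta,\zeta\rangle\bigr)d\vol=2\int_X\langle d_A^+d_A^*\eta,\zeta\rangle\,d\vol$ is coercive on $L^2_{1,A}(\Omega^+(\ad E))$ over all of $X$ with a constant depending only on the uniformly positive scalar curvature $s$, not on $A$. Since $\xi$ is supported in two unit slabs, $\zeta\mapsto2\int_X\langle\xi,\zeta\rangle$ is a bounded functional, so Lax--Milgram yields a unique $\eta\in L^2_{1,A}(\Omega^+(\ad E))$ with $d_A^+d_A^*\eta=\xi$ and $\norm{\eta}_{L^2_{1,A}(X)}\lesssim\norm{\xi}_{L^2(X)}\lesssim\norm{\xi}_{L^\infty(X)}$, and $\xi\mapsto\eta$ is linear. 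The $L^\infty$ bootstrap is the same as in (i) --- gauge-fix $A$ on each unit slab uniformly over $\moduli_d$ and apply interior elliptic estimates for the elliptic operator $d_A^+d_A^*$ --- but now every constant involved (the Weitzenb\"{o}ck constant and the slabwise elliptic constants) is uniform over $\moduli_d$, giving $\norm{\eta}_{L^\infty(X)}+\norm{\nabla_A\eta}_{L^\infty(X)}\lesssim\norm{\xi}_{L^\infty(X)}$; this is why (ii) needs no non-degeneracy.

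The routine ingredients are the Lax--Milgram step and the standard elliptic bootstrap. The one place that needs real care is organizing the slab-by-slab gauge fixing so that the local $L^\infty$ estimates assemble into a global one with a single constant of the right type ($\lesssim_A$ in (i), $\lesssim$ in (ii)) --- the same device already used in Lemma \ref{lemma: in prop: preliminary result on the study of H^1_A}, which I expect to be the only delicate point.
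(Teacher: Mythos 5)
Your proposal is correct and follows essentially the same route as the paper: existence by the $L^2$-method (Riesz/Lax--Milgram), with coercivity supplied by Lemma \ref{lemma: basic estimates from non-degeneracy} (i) in part (i) and by the Weitzenb\"ock formula with uniformly positive scalar curvature in part (ii), followed by slab-by-slab elliptic regularity to upgrade to $L^\infty$. The paper is terser --- it defines the Hilbert space directly as $\{w\in L^2\colon d_Aw\in L^2\}$ with inner product $(d_A\cdot,d_A\cdot)_{L^2}$ rather than via completion, and merely cites the $L^2$-method for (ii) --- but the content matches your argument.
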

\begin{proof}
(i) Set $L^2_{1,A}(\ad E):= \{w\in L^2(\ad E)|\, d_Aw\in L^2(X)\}$ with the inner product
$(w_1,w_2)' := (d_Aw_1,d_Aw_2)_{L^2(X)}$.
From Lemma \ref{lemma: basic estimates from non-degeneracy} (i), 
every compactly supported $w\in \Omega^0(\ad E)$ satisfies
$\norm{w}_{L^2(X)}\leq \sqrt{C_1}\norm{d_Aw}_{L^2(X)} =\sqrt{C_1}\norm{w}'$.
Hence the norm $\norm{\cdot}'$ is equivalent to $\norm{\cdot}_{L^2_{1,A}(X)}$.
In particular $(L^2_{1,A}(\ad E),(\cdot,\cdot)')$ becomes a Hilbert space.

The rest of the argument is the standard $L^2$-method:
Take $u\in \Omega^0(\ad E)$ with $\supp(u) \subset (\alpha,\alpha+1)\times S^3 \cup (\beta-1,\beta)\times S^3$.
We apply the Riesz representation theorem to the following bounded linear functional:
\[ (\cdot,u)_{L^2(X)}: L^2_{1,A}(\ad E)\to \mathbb{R}, \quad w\mapsto (w,u)_{L^2(X)}.\]
(From Lemma \ref{lemma: basic estimates from non-degeneracy} (i), 
$|(w,u)_{L^2(X)}| \leq \sqrt{C_1}\norm{w}' \norm{u}_{L^2(X)}$.)
Then there uniquely exists $v\in L^2_{1,A}(\ad E)$ satisfying 
$(d_Aw,d_Av)=(w,v)' =(w,u)_{L^2(X)}$.
This means that $d^*_A d_A v=u$ as a distribution.
Moreover $\norm{d_A v}_{L^2(X)} = \norm{v}' \leq \sqrt{C_1}\norm{u}_{L^2(X)} 
\lesssim_A \norm{u}_{L^\infty(X)}$.
From Lemma \ref{lemma: basic estimates from non-degeneracy} (i),
$\norm{v}_{L^2(X)}\lesssim_A \norm{u}_{L^\infty(X)}$.
As in the proof of Lemma \ref{lemma: in prop: preliminary result on the study of H^1_A}, the elliptic regularity 
theory gives
\[ \norm{v}_{L^\infty(X)} +\norm{d_Av}_{L^\infty(X)}\lesssim 
   \norm{v}_{L^2(X)} + \norm{d^*_A d_A v}_{L^\infty(X)} 
   \lesssim_A \norm{u}_{L^\infty(X)}.\]

(ii) We have the Weitzenb\"{o}ck formula \cite[Chapter 6]{Freed-Uhlenbeck}:
$d_A^+d_A^*\eta = \frac{1}{2}\left(\nabla_A^*\nabla_A+S/3\right)\eta$ for $\eta \in \Omega^+(\ad E)$.
Here $S$ is the scalar curvature of $X$, and it is a positive constant.
Then the $L^2$-method shows the above statement.
(Indeed a stronger result will be given in Lemma \ref{lemma: Green kernel estimate}
 in Section \ref{subsection: perturbation}.)
\end{proof}

\begin{proof}[Proof of Proposition \ref{prop: main result of the study of H^1_A}]
Let $\varepsilon = \varepsilon(A)>0$ be a small number which will be fixed later.
For this $\varepsilon$ and the interval $(\alpha,\beta)\subset \mathbb{R}$
there is a finite dimensional subspace $W\subset \Omega^1(\ad E)$ satisfying the conditions (i), (ii), (iii) in 
Proposition \ref{prop: preliminary result on the study of H^1_A}.

From Lemma \ref{lemma: killing the error term},
there is a linear map $W\to \Omega^0(\ad E)\oplus \Omega^+(\ad E)$, $a\mapsto (v, \eta)$, satisfying
$d^*_A d_A v=d_A^*a$, $d_A^+d_A^*\eta = d_A^+a$ and 
\[ \norm{d_Av}_{L^\infty(X)}+\norm{d_A^*\eta}_{L^\infty(X)}\leq 
   C\norm{D_A a}_{L^\infty(X)} \leq \varepsilon C\norm{a}_{L^\infty(X)}
  (= \varepsilon C\norm{a}_{L^\infty(\alpha<t<\beta)}) \]
where $C=C(A)$ is a positive constant depending only on $A$.
Here we have used the conditions (ii) and (iii) in Proposition \ref{prop: preliminary result on the study of H^1_A}.
Set $a':= a-d_Av-d_A^*\eta$. 
This satisfies $D_A a'=0$.
Set $V:= \{a'|\, a\in W\}\subset H^1_A$.
We have $\norm{a'}_{L^\infty(X)}\geq (1-\varepsilon C)\norm{a}_{L^\infty(X)}$ for $a\in W$.
We choose $\varepsilon>0$ sufficiently small so that $(1-\varepsilon C)>0$.
Then $\dim V=\dim W$. From the condition (i) of Proposition \ref{prop: preliminary result on the study of H^1_A},
\[ \dim V\geq \frac{1}{\pi^2}\int_{\alpha<t<\beta}|F_A|^2 d\vol -\const_\varepsilon.\]
We have $\norm{a'}_{L^\infty(X)}\leq (1+\varepsilon C)\norm{a}_{L^\infty(X)}$ for $a\in W$.
On the other hand, from the conditions (ii) and (iii) of Proposition \ref{prop: preliminary result on the study of H^1_A},
\[ \norm{a'}_{L^\infty(\alpha<t<\beta)}\geq 
   \norm{a}_{L^\infty(\alpha<t<\beta)}-\varepsilon C\norm{a}_{L^\infty(X)}
   = (1-\varepsilon C)\norm{a}_{L^\infty(X)}.\]
Hence 
\[ \norm{a'}_{L^\infty(X)}\leq \frac{1+\varepsilon C}{1-\varepsilon C}\norm{a'}_{L^\infty(\alpha<t<\beta)}.\]
We choose $\varepsilon>0$ so that $(1+\varepsilon C)/(1-\varepsilon C)\leq 2$.
Then $\norm{a'}_{L^\infty(X)}\leq 2\norm{a'}_{L^\infty(\alpha<t<\beta)}$ for all $a'\in V$.
\end{proof}

\section{Deformation theory and the proof of Theorem \ref{thm: local mean dimension around non-degenerate ASD connection}} 
\label{section: deformation theory}

In this section we develop a deformation theory of non-degenerate ASD connections
and prove Theorem \ref{thm: local mean dimension around non-degenerate ASD connection}.
(The paper \cite{Matsuo-Tsukamoto} studied a deformation theory of periodic ASD connections.)
Let $A$ be a non-degenerate ASD connection on $E$ satisfying 
$\norm{F_A}_{\mathrm{op}} < d$. 
Note that this is a strict inequality.
We fix this $A$ throughout this section.

\subsection{Deformation theory}  \label{subsection: deformation theory}
Let $H^1_A\subset \Omega^1(\ad E)$ be the Banach space defined by (\ref{eq: definition of H^1_A}).
Let $k\geq 0$ and $0\leq i\leq 4$ be integers.
For $\xi \in L^2_{k,loc}(\Lambda^i(\ad E))$ (a locally $L^2_k$-section of $\Lambda^i(\ad E)$),
we set 
\[ \norm{\xi}_{\ell^\infty L^2_k} := \sum_{j=0}^k \sup_{n\in \mathbb{Z}}
    \norm{\nabla_A^j \xi}_{L^2(n<t< n+1)}.\]
From the elliptic regularity,
we have $\norm{a}_{L^\infty(X)} \lesssim \norm{a}_{\ell^\infty L^2_k}\lesssim \const_k \norm{a}_{L^\infty(X)}$
for $a\in H^1_A$ (cf. the proof of Lemma \ref{lemma: in prop: preliminary result on the study of H^1_A}).

Let $\ell^\infty L^2_k(\Lambda^+(\ad E))$ be the Banach space of $\xi\in L^2_{k,loc}(\Lambda^+(\ad E))$
satisfying $\norm{\xi}_{\ell^\infty L^2_k} < \infty$.
From the Sobolev embedding theorem,
$\norm{\xi}_{L^\infty(X)}\lesssim \norm{\xi}_{\ell^\infty L^2_3}$ for $\xi \in \ell^\infty L^2_3(\Lambda^+(\ad E))$.
Consider 
\begin{equation*}
 \begin{split}
 \Phi: H^1_A &\times \ell^\infty L^2_5(\Lambda^+(\ad E))\to \ell^\infty L^2_3(\Lambda^+(\ad E)), \\
   & (a,\phi)\mapsto F^+(A+a+d_A^*\phi) = (a\wedge a)^+ +d_A^+d_A^*\phi + [a\wedge d_A^*\phi]^+ + (d_A^*\phi \wedge d_A^*\phi)^+.
 \end{split}
\end{equation*}
This is a smooth map between the Banach spaces with $\Phi(0,0)=0$.
We want to describe the fiber $\Phi^{-1}(0)$ around the origin by using the implicit function theorem.
Let $(\partial_2\Phi)_{0}:\ell^\infty L^2_5(\Lambda^+(\ad E))\to \ell^\infty L^2_3(\Lambda^+(\ad E))$
be the derivative of $\Phi$ at the origin with respect to the second variable $\phi$.
We have $(\partial_2\Phi)_{0}(\phi) = d_A^+d_A^*\phi = \frac{1}{2}(\nabla_A^*\nabla_A +S/3)\phi$
for $\phi\in \ell^\infty L^2_5(\Lambda^+(\ad E))$ by the Weitzenb\"{o}ck formula.
($S$ is the scalar curvature of $X$, and it is a positive constant.)
The following $L^\infty$-estimate is proved in \cite[Proposition A.5]{Matsuo-Tsukamoto}:
\begin{lemma} \label{lemma: L^infty estimate}
Let $\xi$ be a $C^2$-section of $\Lambda^+(\ad E)$ over $X$.
We set $\eta:= (\nabla_A^*\nabla_A +S/3)\xi$, and suppose 
$\norm{\xi}_{L^\infty(X)} <\infty$ and $\norm{\eta}_{L^\infty(X)} < \infty$.
Then 
\[ \norm{\xi}_{L^\infty(X)}\leq (24/S)\norm{\eta}_{L^\infty(X)}.\]
\end{lemma}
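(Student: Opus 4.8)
The plan is to reduce the $L^\infty$-bound to a scalar differential inequality via a Bochner/Kato computation, and then to run a maximum-principle argument tailored to the noncompactness of $X$. Put $f:=|\xi|^2$, a nonnegative $C^2$-function on $X$, and set $M:=\norm{\xi}_{L^\infty(X)}$, $N:=\norm{\eta}_{L^\infty(X)}$; both are finite by hypothesis. Since $\nabla_A$ is metric, we have the elementary Bochner identity $\Delta f = 2\langle \nabla_A^*\nabla_A\xi,\xi\rangle - 2|\nabla_A\xi|^2$, where $\Delta=\nabla^*\nabla$ is the nonnegative Laplacian on functions. By the very definition of $\eta$ we have $\nabla_A^*\nabla_A\xi = \eta-(S/3)\xi$, so using the pointwise bound $\langle\eta,\xi\rangle\le|\eta|\,|\xi|\le NM$ and discarding the nonpositive term $-2|\nabla_A\xi|^2$ I would obtain
\[ \Delta f + (2S/3)\,f \le 2NM \qquad \text{on } X. \]

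The only genuinely delicate point is that $X=\mathbb{R}\times S^3$ is noncompact, so one cannot apply the maximum principle globally; the remedy is a comparison function on finite cylinders. Fix $p_0=(t_0,\theta_0)\in X$ and $R>0$, and on the compact slab $\Omega_R:=\{\,|t-t_0|\le R\,\}\times S^3$ introduce the barrier
\[ \Psi_R(t) := \frac{3NM}{S} + M^2\,\frac{\cosh\!\big(\mu(t-t_0)\big)}{\cosh(\mu R)}, \qquad \mu:=\sqrt{S/3}. \]
A one-line computation (using that $\Psi_R$ depends only on $t$, so $\Delta\Psi_R=-\Psi_R''=-\mu^2(\Psi_R-3NM/S)$, together with $\Psi_R\ge 3NM/S$ and $\mu^2=S/3$) shows $\Delta\Psi_R+(2S/3)\Psi_R\ge 2NM$ on $\Omega_R$, while on the two boundary components $\{t=t_0\pm R\}\times S^3$ one has $\Psi_R=3NM/S+M^2\ge M^2\ge f$.

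Finally I would invoke the weak maximum principle for the operator $\Delta+(2S/3)$, which has nonnegative principal part and strictly positive zeroth-order coefficient, on the compact slab: since $\Delta(f-\Psi_R)+(2S/3)(f-\Psi_R)\le 0$ in the interior and $f-\Psi_R\le 0$ on $\partial\Omega_R$, an interior positive maximum of $f-\Psi_R$ would be impossible, hence $f\le\Psi_R$ throughout $\Omega_R$. Evaluating at $p_0$ and letting $R\to+\infty$ gives $|\xi(p_0)|^2\le 3NM/S$; as $p_0$ was arbitrary we get $M^2\le 3NM/S$, hence $\norm{\xi}_{L^\infty(X)}=M\le (3/S)N=(3/S)\norm{\eta}_{L^\infty(X)}$, which is even stronger than the asserted bound $(24/S)\norm{\eta}_{L^\infty(X)}$. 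I expect everything after the Bochner step to be routine; the one place needing care is the choice of the $\cosh$-barrier, which must dominate $f$ at the ends of the slab yet relax to the harmless constant $3NM/S$ in the interior as $R\to\infty$. (One could equally work with $u=|\xi|$ via Kato's inequality $\Delta|\xi|\le\langle\nabla_A^*\nabla_A\xi,\xi\rangle/|\xi|$, obtaining $\Delta u+(S/3)u\le N$ away from the zero set of $\xi$ and comparing with $3N/S+M\cosh(\mu(t-t_0))/\cosh(\mu R)$; using $f=|\xi|^2$ has the mild advantage of being $C^2$ everywhere, so the classical maximum principle applies without any care at zeros.)
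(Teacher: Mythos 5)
Your argument is correct and in fact sharper than what the paper states: you obtain $\norm{\xi}_{L^\infty(X)}\le (3/S)\norm{\eta}_{L^\infty(X)}$, which trivially implies the asserted bound with $24/S$. Each step checks out. The Bochner identity $\Delta f = 2\langle\nabla_A^*\nabla_A\xi,\xi\rangle - 2|\nabla_A\xi|^2$ for $f=|\xi|^2$ together with $\nabla_A^*\nabla_A\xi = \eta-(S/3)\xi$ gives the scalar inequality $\Delta f + (2S/3)f \le 2NM$ on all of $X$; the barrier $\Psi_R$ depends only on $t$, so on the product cylinder $\Delta\Psi_R = -\Psi_R''$, and with $\mu^2=S/3$ one indeed gets $\Delta\Psi_R + (2S/3)\Psi_R = (S/3)\Psi_R + NM \ge 2NM$ since $\Psi_R\ge 3NM/S$; the weak maximum principle for $\Delta + (2S/3)$ on the compact slab applies because $v:=f-\Psi_R$ is $C^2$, satisfies $\Delta v + (2S/3)v\le 0$ with $v\le 0$ on the boundary, and an interior positive maximum would force $\Delta v\ge 0$ there, contradicting $\Delta v\le -(2S/3)v<0$; letting $R\to\infty$ kills the $\cosh$ term and yields $M^2\le 3NM/S$.

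Note that the paper itself does not prove this lemma; it cites \cite[Proposition A.5]{Matsuo-Tsukamoto}. Your proof is therefore a fully self-contained alternative, and it yields a better constant than the one quoted from that reference. The value $3/S$ is also the natural one: at a hypothetical interior maximum of $|\xi|^2$ the Bochner identity alone would give $(S/3)|\xi|^2\le\langle\eta,\xi\rangle\le N|\xi|$, so $|\xi|\le 3N/S$; your $\cosh$-barrier is exactly the device that promotes this heuristic to a rigorous statement on the noncompact cylinder, where the supremum need not be attained. The only point worth making explicit in a final write-up is that the product structure of the metric on $\mathbb{R}\times S^3$ is what makes $\Delta\Psi_R(t)=-\Psi_R''(t)$; your parenthetical remark that $f=|\xi|^2$ is preferable to $|\xi|$ (avoiding the zero set in Kato's inequality) is also well taken.
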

\begin{lemma} \label{lemma: deformation is unobstructed}
The operator 
$(\partial_2\Phi)_{0}: \ell^\infty L^2_5(\Lambda^+(\ad E))\to \ell^\infty L^2_3(\Lambda^+(\ad E))$
is an isomorphism.
This means that a local deformation of $A$ is ``unobstructed''.
\end{lemma}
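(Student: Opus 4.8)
The plan is to show that the elliptic operator $(\partial_2\Phi)_0 = \tfrac12(\nabla_A^*\nabla_A + S/3)$ is injective and surjective as a map $\ell^\infty L^2_5(\Lambda^+(\ad E)) \to \ell^\infty L^2_3(\Lambda^+(\ad E))$, with a bounded inverse. Injectivity follows immediately from Lemma \ref{lemma: L^infty estimate}: if $(\nabla_A^*\nabla_A + S/3)\phi = 0$ with $\norm{\phi}_{L^\infty(X)} < \infty$ (which holds since $\phi \in \ell^\infty L^2_5 \hookrightarrow L^\infty$ by Sobolev), then $\norm{\phi}_{L^\infty(X)} \le (24/S)\cdot 0 = 0$, so $\phi = 0$. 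The same estimate, combined with interior elliptic estimates applied on unit-length sub-cylinders $(n,n+1)\times S^3$ and summed in the $\ell^\infty$ sense over $n \in \mathbb{Z}$ (exactly as in the proof of Lemma \ref{lemma: in prop: preliminary result on the study of H^1_A}, using the uniform bounds on the connection matrix of $A$ coming from compactness of $\moduli_d$), gives the a priori estimate
\[
 \norm{\phi}_{\ell^\infty L^2_5} \lesssim \norm{(\nabla_A^*\nabla_A + S/3)\phi}_{\ell^\infty L^2_3}
\]
for all $\phi \in \ell^\infty L^2_5(\Lambda^+(\ad E))$. This a priori bound shows $(\partial_2\Phi)_0$ is bounded below, hence injective with closed range.

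It remains to prove surjectivity. First I would establish surjectivity of the operator acting on the periodic models: for each $T > 0$ the operator $\nabla_{A_T}^*\nabla_{A_T} + S/3$ on the compact manifold $(\mathbb{R}/T\mathbb{Z})\times S^3$ is invertible because it is a positive self-adjoint elliptic operator (its kernel is trivial since $S > 0$, by the same Weitzenb\"ock-pairing argument). For the cylinder itself, given $\eta \in \ell^\infty L^2_3(\Lambda^+(\ad E))$ I would solve the equation by exhaustion: on the truncated cylinder $(-N,N)\times S^3$ solve $(\nabla_A^*\nabla_A + S/3)\phi_N = \eta$ with, say, Dirichlet boundary conditions (solvable since the operator is positive, via the direct method / Lax--Milgram on $L^2_{1,0}$). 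The $L^\infty$-estimate of Lemma \ref{lemma: L^infty estimate} — or rather its local/boundary analogue, using the maximum principle for $|\phi_N|^2$ which satisfies $\Delta |\phi_N|^2 \le \text{const} - (S/3)|\phi_N|^2$ away from the Dirichlet boundary — yields a bound $\norm{\phi_N}_{L^\infty} \lesssim \norm{\eta}_{L^\infty}$ uniform in $N$ on any fixed compact set, and then interior elliptic estimates upgrade this to uniform $\ell^\infty L^2_5$ bounds on compact sets. A diagonal subsequence argument produces a limit $\phi \in L^2_{5,loc}$ solving $(\nabla_A^*\nabla_A + S/3)\phi = \eta$ on all of $X$, and the uniform estimates pass to the limit to give $\phi \in \ell^\infty L^2_5$ with $\norm{\phi}_{\ell^\infty L^2_5} \lesssim \norm{\eta}_{\ell^\infty L^2_3}$. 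Combined with the a priori estimate above, this shows $(\partial_2\Phi)_0$ is a bounded bijection, hence an isomorphism by the open mapping theorem.

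The main obstacle is the surjectivity step, specifically extracting a global solution with uniform control: because the cylinder is non-compact and we work with $\ell^\infty L^2_k$ (rather than $L^2$ or weighted) norms, one cannot simply invert on $L^2$. The key is that the Weitzenb\"ock zeroth-order term $S/3$ is a \emph{positive} constant, which provides exponential-decay Green's kernel estimates and makes the $L^\infty \to L^\infty$ bound of Lemma \ref{lemma: L^infty estimate} available; this positivity is precisely why the metric on $X = \mathbb{R}\times S^3$ was chosen. A cleaner alternative to the exhaustion argument would be to construct the inverse directly from a Green's kernel: patch together the Green's kernels of the invertible periodic operators, or note that a stronger statement is anyway promised in Lemma \ref{lemma: Green kernel estimate} of Section \ref{subsection: perturbation}, whose estimate one could simply cite to conclude both the uniform bound and the existence of the solution operator.
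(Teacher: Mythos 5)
Your proposal is correct, and the overall strategy (injectivity via the Weitzenb\"ock $L^\infty$-estimate, surjectivity via exhaustion with a uniform $L^\infty$ bound and a weak limit) matches the paper's. The one genuine difference is \emph{where} the truncation happens. You truncate the \emph{domain}: solve with Dirichlet boundary conditions on $(-N,N)\times S^3$ and supply a boundary-compatible $L^\infty$ bound by the maximum principle applied to $|\phi_N|^2$, which indeed works because of the positive $S/3$ term. The paper instead truncates the \emph{data}: it sets $\eta_n := \varphi_n\eta$ so that $\eta_n\in L^2(X)$, solves on the full non-compact cylinder by Lax--Milgram (the $+S/3$ term makes $\int|\nabla_A\xi|^2+\tfrac{S}{3}|\xi|^2$ coercive on $L^2_{1,A}$ even without compactness), and then applies Lemma~\ref{lemma: L^infty estimate} verbatim to the globally defined $\xi_n$. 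Both routes are valid and of comparable difficulty: yours avoids having to verify the $L^2$-method on an unbounded domain but requires you to reprove an $L^\infty$ estimate near a Dirichlet boundary; the paper's sidesteps any boundary analysis at the cost of a one-line coercivity check on $X$. You also correctly observe, as the paper does in the first sentence of its proof, that the whole lemma could instead be deduced from the Green-kernel estimates of Lemma~\ref{lemma: Green kernel estimate}. One small caveat: your remark about ``periodic models'' plays no actual role in the argument and can be dropped, and your uniform $L^\infty$ bound from the maximum principle in fact holds on all of $(-N,N)\times S^3$, not only on fixed compact sets, so you can state it more cleanly.
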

\begin{proof}
This can be proved by using Lemma \ref{lemma: Green kernel estimate}
in Section \ref{subsection: perturbation}.
 But here we give a direct proof.
From the $L^\infty$-estimate in Lemma \ref{lemma: L^infty estimate}, 
the above operator is injective.
Hence the problem is its surjectivity.
Take $\eta\in \ell^\infty L^2_3(\Lambda^+(\ad E))$.
Let $\varphi_n:\mathbb{R}\to [0,1]$ be a cut-off function such that 
$\varphi_n =1$ over $[-n,n]$ and $\supp(\varphi_n)\subset (-n-1,n+1)$.
Set $\eta_n:=\varphi_n \eta$.
By the $L^2$-method (see the proof of Lemma \ref{lemma: killing the error term}), 
there exists $\xi_n\in L^2_{1,A}(\Lambda^+(\ad E))$
satisfying $(\nabla_A^*\nabla_A+S/3)\xi_n = \eta_n$ as a distribution and 
$\norm{\xi_n}_{L^2(X)}\lesssim \norm{\eta_n}_{L^2(X)} < \infty$.
From the elliptic regularity, $\xi_n$ is in $L^2_{5,loc}$ and hence of class $C^2$.
Moreover
$\norm{\xi_n}_{L^\infty(X)}\lesssim \norm{\xi_n}_{L^2(X)}+\norm{\eta_n}_{L^\infty} < \infty$.
Hence by the $L^\infty$-estimate (Lemma \ref{lemma: L^infty estimate})
\[  \norm{\xi_n}_{L^\infty(X)}\leq (24/S)\norm{\eta_n}_{L^\infty(X)}\leq (24/S)\norm{\eta}_{L^\infty(X)}
     \lesssim \norm{\eta}_{\ell^\infty L^2_3}.\]
For any integer $m$,
\[ \norm{\xi_n}_{L^2_5(m<t<m+1)} \lesssim
    \norm{\xi_n}_{L^\infty(X)}+\norm{\eta_n}_{\ell^\infty L^2_3}
    \lesssim \norm{\eta}_{\ell^\infty L^2_3}.\]
By choosing a subsequence $\{\xi_{n_k}\}_{k\geq 1}$, there exists $\xi\in L^2_{5,loc}(\Lambda^+(\ad E))$ such that 
$\xi_{n_k}$ converges to $\xi$ weakly in $L^2_5((m, m+1)\times S^3)$ for every $m\in \mathbb{Z}$.
Then $(\nabla_A^*\nabla_A+S/3)\xi=\eta$ and 
$\norm{\xi}_{\ell^\infty L^2_5}\lesssim \norm{\eta}_{\ell^\infty L^2_3} < \infty$.
\end{proof}
By the implicit function theorem, we can choose $R>0$ and $R'>0$ such that for any 
$a\in H^1_A$ with $\norm{a}_{L^\infty(X)}\leq R$ there uniquely exists
$\phi_a\in \ell^\infty L^2_5(\Lambda^+(\ad E))$
satisfying $F^+(A+a+d_A^*\phi_a)=0$ and $\norm{\phi_a}_{\ell^\infty L^2_5}\leq R'$.
We have $\phi_0=0$.
For $a\in B_R(H^1_A) := \{a\in H^1_A| \norm{a}_{L^\infty}\leq R\}$ we set $a':=a+d_A^*\phi_a$.
This satisfies the ASD equation $F^+(A+a')=0$ and the Coulomb gauge condition $d_A^*a'=0$.
Since $\norm{F_A}_{\mathrm{op}}<d$, we can choose $R>0$ sufficiently small so that 
$\norm{F(A+a')}_{\mathrm{op}}\leq d$ for all $a\in B_{R}(H^1_A)$.
Thus we get a deformation map:
\begin{equation}  \label{eq: deformation map}
  B_R(H^1_A)\to \moduli_d, \quad a\mapsto [A+a'].
\end{equation}

The derivative $(\partial_1\Phi)_{0}:H^1_A\to \ell^\infty L^2_3(\Lambda^+(\ad E))$ of $\Phi$
at the origin with respect to the first variable is equal to zero.
Hence the derivative of the following map at the origin is also zero:
\[ B_R(H^1_A)\to \ell^\infty L^2_5(\Lambda^+(\ad E)), \quad a\mapsto \phi_a. \]
Then we get 
\begin{equation}  \label{eq: phi_a - phi_b}
   \norm{\phi_a-\phi_b}_{\ell^\infty L^2_5} \lesssim_A 
   \left(\norm{a}_{L^\infty(X)}+ \norm{b}_{L^\infty(X)}\right) \norm{a-b}_{L^\infty(X)}
\end{equation}
for $a,b\in B_R(H^1_A)$.
In particular the map $(B_R(H^1_A), \norm{\cdot}_{L^\infty(X)})\to \moduli_d$ is continuous.

\begin{remark}
Note that the construction of the deformation map (\ref{eq: deformation map}) 
does not use the non-degeneracy condition of $A$.
It will be used for the further study of the deformation map.
Indeed, since $A$ is non-degenerate, we can apply Corollary \ref{cor: gauge fixing} to this situation.
Then we can show that the above map (\ref{eq: deformation map}) is injective if $R$ is sufficiently small.
Moreover if $B_R(H^1_A)$ is endowed with the topology of uniform convergence over compact subsets (this is not 
equal to the norm topology), then $B_R(H^1_A)$ is compact 
and the map (\ref{eq: deformation map}) becomes a topological embedding.
We don't need these facts for the proof of Theorem \ref{thm: local mean dimension around non-degenerate ASD connection}.
So we omit the detail. But it is not difficult.
\end{remark}

\begin{remark}
In the above argument we have solved the equation $F^+(A+a+d_A^*\phi)=0$ by using the implicit function theorem.
But indeed we can solve it more directly by using the method of Section \ref{subsection: perturbation}.
So there exists a little redundancy in our way of the explanation.
We can prepare a unified method for both Sections \ref{subsection: deformation theory} and 
\ref{subsection: perturbation}.
But we don't take this way here because 
this redundancy is not so heavy and the above implicit function theorem argument seems conceptually easier 
(at least for the authors) to understand.
\end{remark}

\subsection{Proof of Theorem \ref{thm: local mean dimension around non-degenerate ASD connection}}
\label{subsection: proof of Theorem local mean dimension around A}

We need a distance on $\moduli_d$.
Any choice will do.
One choice is:
For $[A_1],[A_2]\in \moduli_d$, we define the distance $\dist([A_1],[A_2])$ as the infimum of 
\[ \sum_{n=1}^\infty 2^{-n} \frac{\norm{g(A_1)-A_2}_{L^\infty(-n<t<n)}}{1+\norm{g(A_1)-A_2}_{L^\infty(-n<t<n)}} \]
over all gauge transformations $g:E\to E$.
We don't need this explicit formula. But probably it will be helpful for the understanding.

Recall the following notation:
For $\Omega\subset \mathbb{R}$ we define $\dist_{\Omega}([A_1],[A_2])$ as the supremum of
$\dist([s^*(A_1)],[s^*(A_2)])$ over $s\in \Omega$.
$s^*(\cdot)$ is the pull-back by $s:E\to E$.
In particular, for $s\in \mathbb{R}$, the distance $\dist_{\{s\}}([A_1],[A_2])$ is the infimum of 
\[  \sum_{n=1}^\infty 2^{-n} 
   \frac{\norm{g(A_1)-A_2}_{L^\infty(s-n<t<s+n)}}{1+\norm{g(A_1)-A_2}_{L^\infty(s-n<t<s+n)}}\]  
over all gauge transformations $g:E\to E$.
We will abbreviate $\dist_{\{s\}}([A_1],[A_2])$ to $\dist_s([A_1],[A_2])$. 

For the proof of Theorem \ref{thm: local mean dimension around non-degenerate ASD connection},
we need to compare the distances $\dist_{(\alpha,\beta)}$ on $\moduli_d$ and 
$\norm{\cdot}_{L^\infty(\alpha<t<\beta)}$ on $B_R(H^1_A)$ for intervals $(\alpha,\beta)\subset \mathbb{R}$.
The next lemma gives us a solution.
It is a consequence of Proposition \ref{prop: Coulomb gauge}.
\begin{lemma} \label{lemma: distortion of the deformation map}
We can choose $0<R_1 <R$ so that the following 
statement holds.
For any $\delta>0$ there exists $\varepsilon>0$ such that 
if $a,b\in H^1_A$ with $\norm{a}_{L^\infty(X)}, \norm{b}_{L^\infty(X)} < R_1$ satisfy
\[ \dist_{s}([A+a'],[A+b']) \leq \varepsilon \]
for some $s\in \mathbb{R}$, then 
\[ \norm{a-b}_{L^\infty(s<t<s+1)} \leq \frac{1}{4}\norm{a-b}_{L^\infty(X)} + \delta.\]
\end{lemma}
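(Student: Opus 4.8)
The plan is to deduce Lemma \ref{lemma: distortion of the deformation map} from Proposition \ref{prop: Coulomb gauge}, applied on a bounded window centered at $t=s$, after first upgrading the gauge transformation supplied by the hypothesis $\dist_s([A+a'],[A+b'])\le\varepsilon$ to one in which the relevant connections sit in Coulomb gauge relative to $A$. First I would fix a small constant $\tau_0=\tau_0(A)$, to be pinned down at the very end, run Proposition \ref{prop: Coulomb gauge} with $\tau=\tau_0$ to obtain $\varepsilon_2=\varepsilon_2(A,\tau_0)$ and $K=K(A,\tau_0)$, and then choose $R_1\in(0,R)$ so small that for all $a,b\in H^1_A$ with $\norm{a}_{L^\infty(X)},\norm{b}_{L^\infty(X)}<R_1$ one has $\norm{a'}_{L^\infty(X)},\norm{b'}_{L^\infty(X)}<\varepsilon_2$, $\norm{a'-b'}_{L^\infty(X)}\le 2\norm{a-b}_{L^\infty(X)}$, and $\norm{(a-a')-(b-b')}_{L^\infty(X)}\lesssim_A R_1\norm{a-b}_{L^\infty(X)}$; all of these are immediate from $a'=a+d_A^*\phi_a$ together with (\ref{eq: phi_a - phi_b}). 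The point is that $R_1$ depends only on $A$ (through $\tau_0$), not on the $\delta$ that will be given later, which matches the order of quantifiers in the statement.

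Next, given such $a,b$ and an $s$ with $\dist_s([A+a'],[A+b'])\le\varepsilon$, I would let $n\in\mathbb{Z}$ with $s\in[nT,(n+1)T)$ and set $N:=\lceil(K+3)T\rceil$, so that (using $T>3$) the window $((n-1-K)T,(n+2+K)T)$ is contained in $\{\,s-N+1<t<s+N-1\,\}$. By the definition of $\dist_s$ there is a global gauge transformation $g_0$ with $\norm{g_0(A+a')-(A+b')}_{L^\infty(s-N<t<s+N)}\le C(N)\varepsilon$. Since $g_0(A+a')$ and $A+b'$ are ASD connections with curvature $\lesssim d$ that are $C^0$-close on this window, and $A+b'$ has uniformly bounded geometry, the standard local gauge fixing for ASD connections (the local slice theorem together with elliptic regularity for the ASD equation) lets me replace $g_0$ by $hg_0$ for a gauge transformation $h$ on $\{s-N+1<t<s+N-1\}$ that is $C^\infty$-close to $1$, so that in addition $\alpha:=g_0(A+a')-(A+b')$ satisfies $d_A^*\alpha=0$ and $\norm{\alpha}_{L^\infty(s-N+1<t<s+N-1)}\lesssim_A\varepsilon$ on this slightly smaller window. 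Extending $hg_0$ arbitrarily to a global gauge transformation does no harm, since Proposition \ref{prop: Coulomb gauge} will only be invoked on windows inside $((n-1-K)T,(n+2+K)T)$.

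Then I would run the estimate. Since $a-b\in H^1_A$, so $D_A(a-b)=0$, interior elliptic regularity (as in the proof of Lemma \ref{lemma: in prop: preliminary result on the study of H^1_A}) gives $\norm{a-b}_{L^\infty(s<t<s+1)}\le\norm{a-b}_{L^\infty(nT<t<(n+2)T)}\lesssim_A\norm{a-b}_{n-1}^{n+2}$, and by the third bound of the first paragraph $\norm{a-b}_{n-1}^{n+2}\le\norm{a'-b'}_{n-1}^{n+2}+C(A)R_1\norm{a-b}_{L^\infty(X)}$. For each $m\in\{n-1,n,n+1,n+2\}$ the hypotheses of Proposition \ref{prop: Coulomb gauge} hold with $(a',b',g_0,\alpha)$ in place of $(a,b,g,\alpha)$, and since $d_A^*\alpha=0$ on the relevant window, (\ref{eq: statement 1 of prop: Coulomb gauge}) reads
\[ \norm{a'-b'}_{L^2(mT<t<(m+1)T)}\le\tau_0\,\norm{a'-b'}_{m-K}^{m+K}+\sqrt{\norm{\alpha}_{m-K}^{m+K}}. \]
Bounding $\norm{a'-b'}_{m-K}^{m+K}\lesssim_A\norm{a'-b'}_{L^\infty(X)}\le 2\norm{a-b}_{L^\infty(X)}$ and $\norm{\alpha}_{m-K}^{m+K}\lesssim_A\varepsilon$ (each $L^2$-norm over a slice of length $T$ is dominated by the $L^\infty$-norm there), and taking the maximum over $m$, I would arrive at
\[ \norm{a-b}_{L^\infty(s<t<s+1)}\le\bigl(C(A)\tau_0+C(A)R_1\bigr)\norm{a-b}_{L^\infty(X)}+C(A)\sqrt{\varepsilon}. \]
It then remains to fix $\tau_0$ and $R_1$ (depending only on $A$) so that $C(A)\tau_0+C(A)R_1\le\tfrac14$, and finally, given $\delta>0$, to take $\varepsilon>0$ small enough that $C(A)\sqrt{\varepsilon}\le\delta$ and small enough that the gauge fixing of the second paragraph applies.

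I expect the main obstacle to be the gauge-fixing step of the second paragraph. Proposition \ref{prop: Coulomb gauge} requires the Coulomb-type condition $d_A^*\alpha=0$ (or at least that $\norm{d_A^*\alpha}$ be small), but the $\alpha$ coming from $\dist_s$ satisfies only ``half'' of an elliptic system — one has $\norm{d_A^+\alpha}$ controlled quadratically by the ASD equations, but nothing forces $\norm{d_A^*\alpha}$ to be controlled by the $C^0$-smallness of $\alpha$ alone — so one genuinely has to promote the $C^0$-closeness supplied by $\dist_s$ to a Coulomb gauge on a bounded window by invoking the local slice theorem and elliptic bootstrapping for the ASD equation. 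The remaining points to keep straight are the quantifier order (the radius $R_1$ must be independent of $\delta$) and the bookkeeping of window sizes, so that every application of Proposition \ref{prop: Coulomb gauge} stays inside the window on which the gauge has been adjusted; this last point is where the standing assumption $T>3$ is used.
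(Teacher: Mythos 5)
Your overall architecture --- reducing to an estimate on $(nT,(n+2)T)$ by elliptic regularity of $D_A(a-b)=0$, passing from $a-b$ to $a'-b'$ via (\ref{eq: phi_a - phi_b}), invoking Proposition \ref{prop: Coulomb gauge} slab by slab, and fixing $\tau_0$ and $R_1$ before being handed $\delta$ --- matches the paper's proof, and the window bookkeeping and quantifier order are handled correctly. The genuine gap is exactly at the point you flagged: the gauge-fixing in your second paragraph. You want to upgrade the $L^\infty$-closeness on the window to an exact Coulomb condition $d_A^*\alpha=0$ via a local slice theorem. But the slice theorem needs the two connections to be close in a Sobolev norm such as $L^p_1$, not merely $L^\infty$; and, as you yourself observe in your last paragraph, nothing controls $\norm{d_{A+b'}^*\alpha_0}$ for the raw difference $\alpha_0=g_0(A+a')-(A+b')$, so $\norm{\alpha_0}_{L^p_1}$ can be large and the implicit-function construction for $h$ has an uncontrolled right-hand side. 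There is a chicken-and-egg problem here: the Sobolev closeness needed to start the slice argument is exactly what the Coulomb gauge would later give you by bootstrapping.

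The paper avoids this. Crucially, it does \emph{not} try to impose $d_A^*\alpha=0$: Proposition \ref{prop: Coulomb gauge} already tolerates a nonzero $\norm{d_A^*\alpha}$ inside the square root. Instead it obtains smallness of \emph{both} $\norm{\alpha}_{L^\infty}$ and $\norm{d^*\alpha}_{L^\infty}$ on the slab in a single stroke by a soft compactness argument: $\moduli_d$ is compact in the topology of $C^\infty$-convergence over compact sets, so for any $\kappa>0$ there is $\varepsilon>0$ such that $\dist([A_1],[A_2])\leq\varepsilon$ forces the existence of a global gauge $g$ with $\norm{g(A_1)-A_2}_{L^\infty}+\norm{d_{A_2}^*(g(A_1)-A_2)}_{L^\infty}<\kappa$ on the window of interest; if this failed, a counterexample sequence together with Uhlenbeck compactness would yield a contradiction. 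This is what the paper's phrase ``from the Uhlenbeck compactness we can choose $\varepsilon>0$'' is doing, and it is precisely the input your slice-theorem step was trying, unsuccessfully, to manufacture. Once your second paragraph is replaced by this compactness step (taking $\kappa\sim\tau_0^2\delta^2$), the rest of your estimate goes through essentially as you wrote it, with $\norm{d_A^*\alpha}$ carried along inside the square root rather than set to zero.
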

\begin{proof}
Let $T=T(A)>3$ be the positive constant introduced in Section \ref{section: Coulomb gauge}.
See the discussion around (\ref{eq: definition of T}).
We choose $n\in \mathbb{Z}$ so that 
\[ nT \leq s-1<s+2 \leq (n+2)T.\]
Then from the elliptic regularity 
\[ \norm{a-b}_{L^\infty(s<t<s+1)} \lesssim \norm{a-b}_{L^2(s-1<t<s+2)}\leq  \norm{a-b}_{L^2(nT<t<(n+2)T)}.\]

Let $0<\tau<1$ be a small number which will be fixed later.
Let $\varepsilon_2 = \varepsilon_2(A,\tau)>0$ and $K=K(A, \tau)\in \mathbb{Z}_{>0}$ be the positive constants 
introduced in Proposition \ref{prop: Coulomb gauge}.
From (\ref{eq: phi_a - phi_b}),
if $R_1\ll 1$,
\[ \norm{a-b}_{L^2(nT<t<(n+2)T)} \leq \norm{a'-b'}_{L^2(nT<t<(n+2)T)} + \tau \norm{a-b}_{L^\infty(X)}.\]
Hence 
\begin{equation} \label{eq: a-b and a'-b'}
  \norm{a-b}_{L^\infty(s<t<s+1)} \lesssim \norm{a'-b'}_{L^2(nT<t<(n+2)T)} + \tau \norm{a-b}_{L^\infty(X)}.
\end{equation}
We estimate the term $ \norm{a'-b'}_{L^2(nT<t<(n+2)T)}$ by using Proposition \ref{prop: Coulomb gauge}.

We can assume $\delta^2< \varepsilon_2$.
From the Uhlenbeck compactness we can choose $\varepsilon>0$ so that 
if two connections $[A_1],[A_2]\in \moduli_d$ satisfy 
$\dist ([A_1],[A_2]) \leq \varepsilon$ then there exists a gauge transformation $g:E\to E$ satisfying 
\[ \norm{g(A_1)-A_2}_{L^\infty(-KT-2T<t<KT+2T)} + \norm{d_{A_2}^*(g(A_1)-A_2)}_{L^\infty(-KT-2T<t<KT+2T)}
   < \tau^2\delta^2.\]
Then the assumption $\dist_s([A+a'],[A+b']) \leq \varepsilon$ implies that there exists a gauge transformation 
$g:E\to E$ satisfying (set $\alpha := g(A+a')-(A+b')$)
\[  \norm{\alpha}_{L^\infty((n-K)T<t<(n+K+2)T)} + \norm{d_{A+b'}^*\alpha}_{L^\infty((n-K)T<t<(n+K+2)T)}
    < \tau^2 \delta^2 < \varepsilon_2.\]
In particular $\norm{\alpha}_{L^\infty((n-K)T<t<(n+K+2)T)} < \varepsilon_2$.
Hence if $R_1\ll \varepsilon_2$ then we can apply Proposition \ref{prop: Coulomb gauge} to the present situation:
\begin{equation*}
  \begin{split}
   \norm{a'-b'}_{L^2(nT<t<(n+2)T)} &\lesssim_A \tau \norm{a'-b'}_{\ell^\infty L^2}+ \tau\delta \\
   &\lesssim_A \tau \norm{a-b}_{L^\infty(X)} + \tau \delta \quad (\text{by (\ref{eq: phi_a - phi_b})}).
  \end{split}
\end{equation*}
By applying this estimate to the above (\ref{eq: a-b and a'-b'}), we get 
\[ \norm{a-b}_{L^\infty(s<t<s+1)} \lesssim_A \tau \norm{a-b}_{L^\infty(X)} + \tau \delta.\]
We choose $\tau>0$ sufficiently small. Then 
\[ \norm{a-b}_{L^\infty(s<t<s+1)} \leq \frac{1}{4}\norm{a-b}_{L^\infty(X)} + \delta.\]
\end{proof}
Recall that $B_r([A])_{\mathbb{R}}\subset \moduli_d$ is the closed ball of radius $r$ centered at $[A]$
with respect to the distance $\dist_{\mathbb{R}}$.
\begin{proposition} \label{prop: widim estimate for B_r([A])_R}
For any $r>0$ there exists $\varepsilon(r)>0$ such that for any $0<\varepsilon \leq \varepsilon(r)$ and 
any interval $(\alpha,\beta)\subset \mathbb{R}$ of 
length $>2$ we have 
\[ \widim_{\varepsilon} (B_r([A])_{\mathbb{R}},\dist_{(\alpha,\beta)}) \geq 
   \frac{1}{\pi^2}\int_{\alpha<t<\beta}|F_A|^2d\vol -C_3.\]
Here $C_3=C_3(A)$ is the positive constant introduced in Proposition \ref{prop: main result of the study of H^1_A}.
\end{proposition}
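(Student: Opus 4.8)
The plan is to bound $\widim_\varepsilon$ from below by pushing a finite-dimensional ball into $B_r([A])_{\mathbb{R}}$ through the deformation map (\ref{eq: deformation map}) and controlling its distortion, after which Example \ref{example: widim of the Banach ball} finishes the job. Fix the interval $(\alpha,\beta)$ and let $V\subset H^1_A$ be the finite-dimensional subspace produced by Proposition \ref{prop: main result of the study of H^1_A} for this interval, so that
\[ \dim V\geq \frac{1}{\pi^2}\int_{\alpha<t<\beta}|F_A|^2\,d\vol - C_3, \qquad \norm{a}_{L^\infty(X)}\leq 2\norm{a}_{L^\infty(\alpha<t<\beta)} \ \ (\forall a\in V). \]
Let $R_1<R$ be the constant of Lemma \ref{lemma: distortion of the deformation map}. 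First I would fix a radius $\rho>0$, \emph{independent of the interval}, with $\rho<R_1$ and small enough that the deformation map sends $B_\rho(V)$ into $B_r([A])_{\mathbb{R}}$: indeed, by (\ref{eq: phi_a - phi_b}) with $b=0$ and $\phi_0=0$ one has $\norm{d_A^*\phi_a}_{L^\infty(X)}\lesssim_A\norm{\phi_a}_{\ell^\infty L^2_5}\lesssim_A\norm{a}_{L^\infty(X)}^2$, hence $\norm{a'}_{L^\infty(X)}\lesssim_A\norm{a}_{L^\infty(X)}$ on $B_\rho(V)$, while from the explicit formula for $\dist$ on $\moduli_d$, using the trivial gauge transformation and translation invariance of $\norm{\cdot}_{L^\infty(X)}$, one reads $\dist_{\mathbb{R}}([A],[A+a'])\leq\norm{a'}_{L^\infty(X)}$. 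Write $\Psi\colon B_\rho(V)\to B_r([A])_{\mathbb{R}}$, $a\mapsto[A+a']$; it is continuous by (\ref{eq: phi_a - phi_b}).

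The heart of the matter is the following \emph{uniform distortion estimate}: there is $\varepsilon(r)>0$, depending on $r$ and $A$ but not on $(\alpha,\beta)$, such that for $a,b\in B_\rho(V)$,
\[ \dist_{(\alpha,\beta)}([A+a'],[A+b'])\leq\varepsilon(r) \ \Longrightarrow\ \norm{a-b}_{L^\infty(X)}\leq\rho/2. \]
To prove it, apply Lemma \ref{lemma: distortion of the deformation map} with $\delta:=\rho/8$, obtaining some $\varepsilon>0$, and set $\varepsilon(r):=\varepsilon$. Assume the left-hand side. Then $\dist_s([A+a'],[A+b'])\leq\varepsilon$ for every $s\in(\alpha,\beta)$, so Lemma \ref{lemma: distortion of the deformation map} gives $\norm{a-b}_{L^\infty(s<t<s+1)}\leq\frac14\norm{a-b}_{L^\infty(X)}+\delta$ for every unit window with $s\in(\alpha,\beta-1)$. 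Since $a-b\in V$, and since these windows cover $(\alpha,\beta)$ (the length is $>2$), taking the supremum over $s$ yields $\norm{a-b}_{L^\infty(\alpha<t<\beta)}\leq\frac14\norm{a-b}_{L^\infty(X)}+\delta$; combining with property (ii) of $V$,
\[ \norm{a-b}_{L^\infty(X)}\leq 2\norm{a-b}_{L^\infty(\alpha<t<\beta)}\leq\tfrac12\norm{a-b}_{L^\infty(X)}+2\delta, \]
whence $\norm{a-b}_{L^\infty(X)}\leq4\delta=\rho/2$.

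Granting the distortion estimate, the conclusion follows by the standard $\varepsilon$-embedding argument. Let $0<\varepsilon\leq\varepsilon(r)$ and let $f\colon B_r([A])_{\mathbb{R}}\to P$ be any $\varepsilon$-embedding into an $n$-dimensional polyhedron, $n=\widim_\varepsilon(B_r([A])_{\mathbb{R}},\dist_{(\alpha,\beta)})$. Then $g:=f\circ\Psi\colon B_\rho(V)\to P$ is continuous, and for each $y\in P$ and $a,b\in g^{-1}(y)$ we have $\dist_{(\alpha,\beta)}(\Psi(a),\Psi(b))\leq\varepsilon\leq\varepsilon(r)$, so $\norm{a-b}_{L^\infty(X)}\leq\rho/2$; thus $g$ is a $(\rho/2)$-embedding of $(B_\rho(V),\norm{\cdot}_{L^\infty(X)})$. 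By Example \ref{example: widim of the Banach ball} (with $\rho/2<\rho$), $n\geq\widim_{\rho/2}(B_\rho(V),\norm{\cdot}_{L^\infty(X)})=\dim V$, and the stated lower bound follows, with $C_3$ independent of $(\alpha,\beta)$.

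The genuinely hard analysis is all upstream: the deformation map and, above all, Lemma \ref{lemma: distortion of the deformation map} (which rests on the Coulomb-gauge Proposition \ref{prop: Coulomb gauge}) together with the index-theoretic Proposition \ref{prop: main result of the study of H^1_A}. Within the present argument the only point requiring care is the interplay of the two properties of $V$: property (ii) detects the global $L^\infty(X)$-norm of elements of $V$, up to the factor $2$, inside $(\alpha,\beta)$, while Lemma \ref{lemma: distortion of the deformation map} controls only unit-window norms and only with coefficient $\frac14$; one must verify that $2\cdot\frac14=\frac12<1$ so that the inequality for $\norm{a-b}_{L^\infty(X)}$ closes, and that $\rho$, $\delta=\rho/8$ and $\varepsilon(r)$ can be chosen without reference to $(\alpha,\beta)$ — which is the case, since none of these constants depends on the interval.
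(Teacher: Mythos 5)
Your proposal is correct and follows the paper's own proof essentially verbatim: both choose an interval-independent small radius ($\rho$ in your notation, $r'$ in theirs), both invoke Proposition \ref{prop: main result of the study of H^1_A} for the subspace $V$, both feed $\delta = \rho/8$ into Lemma \ref{lemma: distortion of the deformation map} and close the inequality $\norm{a-b}_{L^\infty(X)} \leq \tfrac12\norm{a-b}_{L^\infty(X)} + 2\delta$ using property (ii) of $V$, and both conclude via Example \ref{example: widim of the Banach ball}. You merely make two compressed steps of the paper explicit — the passage from the unit-window estimate of Lemma \ref{lemma: distortion of the deformation map} to the full $(\alpha,\beta)$ estimate by taking a supremum over $s \in (\alpha,\beta-1)$, and the $\varepsilon$-embedding argument that translates the distortion estimate into the inequality $\widim_\varepsilon(B_r([A])_{\mathbb{R}},\dist_{(\alpha,\beta)}) \geq \widim_{\rho/2}(B_\rho(V),\norm{\cdot}_{L^\infty(X)})$.
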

\begin{proof}
We can choose $0<r'<R_1$ such that every $a\in B_{r'}(H^1_A)$ satisfies 
$[A+a']\in B_r([A])_{\mathbb{R}}$.
($R_1$ is the constant introduced in the previous lemma.)
From Lemma \ref{lemma: distortion of the deformation map} we can choose $\varepsilon(r)>0$
so that if $a,b\in B_{r'}(H^1_A)$ satisfy 
\[ \dist_{(\alpha,\beta)}([A+a'],[A+b']) \leq \varepsilon(r) \]
then 
\begin{equation} \label{eq: choice of varepsilon(r)}
  \norm{a-b}_{L^\infty(\alpha<t<\beta)} \leq \frac{1}{4}\norm{a-b}_{L^\infty(X)} + \frac{r'}{8}.
\end{equation}

By Proposition \ref{prop: main result of the study of H^1_A}, there exists a linear subspace $V\subset H^1_A$ 
such that 
\[ \dim V\geq  \frac{1}{\pi^2}\int_{\alpha<t<\beta}|F_A|^2d\vol -C_3\]
and that all $a\in V$ satisfy $\norm{a}_{L^\infty(X)} \leq 2\norm{a}_{L^\infty(\alpha<t<\beta)}$.
We investigate the restriction of the deformation map (\ref{eq: deformation map}) to
$B_{r'}(V) := \{a\in V|\norm{a}_{L^\infty(X)} \leq r'\}$.

By applying the above (\ref{eq: choice of varepsilon(r)}) to $B_{r'}(V)$, 
we get the following:
If $a,b\in B_{r'}(V)$ satisfy $\dist_{(\alpha,\beta)}([A+a'],[A+b']) \leq \varepsilon(r)$, then
\[ \norm{a-b}_{L^\infty(X)} \leq 2\norm{a-b}_{L^\infty(\alpha<t<\beta)} \leq 
   \frac{1}{2}\norm{a-b}_{L^\infty(X)} + \frac{r'}{4} \]
and hence $\norm{a-b}_{L^\infty(X)} \leq r'/2$.
Therefore we get: For $0<\varepsilon\leq \varepsilon(r)$
\begin{equation*}
  \begin{split}
   \widim_\varepsilon(B_r([A])_{\mathbb{R}},\dist_{(\alpha,\beta)}) &\geq 
   \widim_{r'/2}(B_{r'}(V),\norm{\cdot}_{L^\infty(X)}) = \dim V 
   \quad (\text{by Example \ref{example: widim of the Banach ball}}) \\
   &\geq \frac{1}{\pi^2}\int_{\alpha<t<\beta}|F_A|^2d\vol -C_3.
  \end{split}
\end{equation*}
\end{proof}

\begin{proof}[Proof of Theorem \ref{thm: local mean dimension around non-degenerate ASD connection}]
The upper bound $\dim_{[A]}(\moduli_d:\mathbb{R})\leq 8\rho(A)$ is given by Theorem \ref{thm: upper bound}.
So the problem is the lower bound.

$\dim_{[A]}(\moduli_d:\mathbb{R}) = \lim_{r\to 0}\dim(B_r([A])_{\mathbb{R}}:\mathbb{R})$, and
$\dim(B_r([A])_{\mathbb{R}}:\mathbb{R})$ is given by 
\[ \lim_{\varepsilon\to 0}\left(\lim_{n\to +\infty}
 \frac{\sup_{x\in \mathbb{R}}\widim_\varepsilon(B_r([A])_{\mathbb{R}},\dist_{(x,x+n)})}{n}\right).\]
By Proposition \ref{prop: widim estimate for B_r([A])_R},
for $0<\varepsilon\leq \varepsilon(r)$ and $n>2$ 
\[ \sup_{x\in \mathbb{R}}\widim_\varepsilon(B_r([A])_{\mathbb{R}},\dist_{(x,x+n)})\geq 
  \frac{1}{\pi^2}\sup_{x\in \mathbb{R}}\int_{x<t<x+n}|F_A|^2d\vol -C_3.\]
Since 
\[ \rho(A) = \lim_{n\to \infty} \frac{1}{8\pi^2n}\sup_{x\in \mathbb{R}}\int_{x<t<x+n}|F_A|^2d\vol,\]
we have 
\[ \dim(B_r([A])_{\mathbb{R}}:\mathbb{R})\geq 8\rho(A).\]
Thus $\dim_{[A]}(\moduli_d:\mathbb{R})\geq 8\rho(A)$.
\end{proof}

\section{Gluing infinitely many instantons} \label{section: gluing infinitely many instantons}

In this section we prove Theorem \ref{thm: abundance of non-degenerate ASD connections}:
Suppose $d>1$. Let $\varepsilon>0$, and let $A$ be an ASD connection on $E$ with $\norm{F_A}_{\mathrm{op}}<d$.
We want to find a non-degenerate ASD connection $\tilde{A}$ on $E$ satisfying
\begin{equation}  \label{eq: condition for tilde A}
   \norm{F(\tilde{A})}_{\mathrm{op}}<d,\quad \rho(\tilde{A})>\rho(A)-\varepsilon.
\end{equation}
If $A$ itself is non-degenerate, then $\tilde{A} := A$ satisfies the condition.
So we assume that $A$ is degenerate.

As we described in Section \ref{subsection: ideas of the proofs},
the idea of the proof is gluing instantons.
We glue infinitely many copies of the instanton $I$ (given in Example \ref{example: BPST instanton}) to $A$ 
over the regions where the curvature $F_A$ has very small norm.
Then we get a non-degenerate ASD connection $\tilde{A}$.
The technique of gluing infinitely many instantons in the context of Yang--Mills theory was first developed 
in \cite{Tsukamoto}. It was further expanded in \cite{Tsukamoto-2}.
Infinite gluing techniques (in other words, shadowing lemmas) for other equations can be found in 
Angenent \cite{Angenent}, Eremenko \cite{Eremenko}, Macr\`{i}--Nolasco--Ricciardi \cite{Macri-Nolasco-Ricciardi}
and Gournay \cite{Gournay-thesis, Gournay-interpolation}.

Throughout this section, we fix a positive number $\tau$ such that 
\[ \norm{F_A}_{\mathrm{op}} < d-\tau, \quad d-\tau >1.\]
Let $\delta = \delta(\varepsilon, \tau)>0$ be a sufficiently small number, and 
$T= T(\varepsilon, \tau, \delta)>0$ be a sufficiently large number.
We choose $\delta$ and $T$ so that the following argument works well.

The variable $t$ means the natural projection $t:X\to \mathbb{R}$.

\subsection{Cut and paste} \label{subsection: cut and paste}

Let $I$ be an ASD connection on $E$ defined in Example \ref{example: BPST instanton}.
For $s\in \mathbb{R}$ let $I_s := (-s)^*(I)$ be the pull-back of $I$ by $(-s):E\to E$.
$I_s$ is an ASD connection on $E$ with 
\[  |F(I_s)|_{\mathrm{op}} = \frac{4}{(e^{t-s}+e^{-t+s})^2},\quad  \norm{F(I_s)}_{\mathrm{op}}=1. \]
Most of its energy is contained in a neighborhood of $t=s$.

We define $J\subset \mathbb{Z}$ as the set of $n\in \mathbb{Z}$ satisfying 
$\norm{F_A}_{L^\infty(nT<t<(n+1)T)} < \delta$.
Since $A$ is degenerate, $J$ is an infinite set.
In this subsection we describe a ``cut and paste'' procedure:
We cut and paste the instanton $I_{nT+\frac{T}{2}}$ to $A$ over $[nT,(n+1)T]\times S^3$ for each $n\in J$.
The resulting new connection will be denoted by $B$. 
($B$ is not ASD in general.)

For simplicity of the notation, we suppose $0\in J$, and we explain the cut and paste procedure over 
the region $[0,T]\times S^3$.
Let $\varphi:X\to [0,1]$ be a cut-off function such that 
\[ \varphi = 0 \> \text{on $\{t\leq T/3\}\cup \{t\geq 2T/3\}$}, \quad 
   \varphi = 1 \> \text{on $\{T/3 +1 \leq t\leq 2T/3 -1\}$}. \]
Set $U:=\{T/3-1<t<T/3+2\}\cup \{2T/3-2<t<2T/3+1\}\subset X$.
Since $T\gg 1$ and $\norm{F_A}_{L^\infty(0<t<T)} < \delta \ll 1$,
we can choose connection matrices of $A$ and $I_{T/2}$ over $U$ such that 
\[ \norm{A}_{C^k(U)} < C(k)\delta,\quad \norm{I_{T/2}}_{C^k(U)} < C(k) \delta \quad (\forall k\geq 0).\]
Then we define a connection $B$ on $[0,T]\times S^3$ by 
\[ B := \begin{cases}
        A \quad &\text{on }\{0\leq t\leq T/3\} \cup \{2T/3\leq t\leq T\}\\
        (1-\varphi)A + \varphi I_{T/2} \quad &\text{on } U \\
        I_{T/2} \quad &\text{on } \{T/3+1\leq t\leq 2T/3-1\}.
        \end{cases}  \]     
In the same way we construct a connection $B$ by cutting and pasting the instanton $I_{nT+\frac{T}{2}}$ to $A$ 
over $[nT,(n+1)T]\times S^3$ for every $n\in J$.

Since $\delta\ll 1$ and $\norm{F_I}_{\mathrm{op}} = 1 < d-\tau$, the connection $B$ satisfies 
\begin{equation} \label{eq: op norm of F_B}
 \norm{F_B}_{\mathrm{op}} < d-\tau.
\end{equation}
For $n\not\in J$, we have $B=A$ over $nT\leq t\leq (n+1)T$.
For $n\in J$, we have 
\[ \frac{1}{8\pi^2 T} \int_{nT<t<(n+1)T} |F_A|^2d\vol < \frac{\delta^2 \vol(S^3)}{8\pi^2} < \frac{\varepsilon}{2},\quad 
   (\delta \ll \varepsilon).\]
From this estimate we get 
\begin{equation} \label{eq: energy density of B}
  \rho(B) > \rho(A) -\frac{\varepsilon}{2}.
\end{equation}
Moreover $B$ satisfies the following non-degeneracy condition 
(cf. Lemma \ref{lemma: equivalent condition of non-degeneracy}):
\begin{equation} \label{eq: nondegeneracy of B}
 \begin{split}
  &\norm{F_B}_{L^\infty(nT<t<(n+1)T)} \geq \delta \quad \text{for $n\not \in J$},\\
  &\norm{F_B}_{L^\infty(nT<t<(n+1)T)} \geq \norm{F_I}_{L^\infty(-1<t<1)} \geq 1
  \quad \text{for $n\in J$}.
 \end{split}
\end{equation}
Therefore $B$ satisfies almost all the desired conditions.
The only one problem is that $B$ is not ASD.
But $B$ is an approximately ASD connection:
$F_B^+$ is supported in 
\[ \bigcup_{n\in J} \left(\left\{nT+\frac{T}{3} \leq t \leq nT+\frac{T}{3}+1\right\} \cup 
  \left\{nT+\frac{2T}{3}-1 \leq t\leq nT+\frac{2T}{3}\right\}\right).\]
Since $\delta\ll 1$, 
\begin{equation}  \label{eq: norm of F_B^+}
   \norm{F_B^+}_{L^\infty(X)} \lesssim \delta, \quad \norm{\nabla_B F_B^+}_{L^\infty(X)} \lesssim \delta.
\end{equation}

\subsection{Perturbation} \label{subsection: perturbation}
In this subsection we construct an  ASD connection $\tilde{A}$ by slightly perturbing the connection $B$
constructed in the previous subsection.
We want to solve the equation 
$F^+(B+d_B^*\phi)=0$ for $\phi\in \Omega^+(\ad E)$.
By using the Weitzenb\"{o}ck formula \cite[Chapter 6]{Freed-Uhlenbeck},
\begin{equation*}
 \begin{split}
 F^+(B+d_B^*\phi) &= F_B^+ + d_B^+d_B^*\phi + (d_B^*\phi\wedge d_B^*\phi)^+ \\
    &= F_B^+ + \frac{1}{2}\left(\nabla^*_B\nabla_B + \frac{S}{3}\right)\phi + F_B^+ \cdot \phi 
     + (d_B^*\phi\wedge d_B^*\phi)^+
 \end{split}
\end{equation*}
where $S$ is the scalar curvature of $X=\mathbb{R}\times S^3$.
$S$ is a positive constant.
The following fact on the operator $(\nabla_B^*\nabla_B+S/3)$ is proved in 
\cite[Appendix, Proposition A.7, Lemmas A.1, A.2]{Matsuo-Tsukamoto}.
\begin{lemma} \label{lemma: Green kernel estimate}
For any smooth $\xi\in \Omega^+(\ad E)$ with $\norm{\xi}_{L^\infty} < \infty$, there uniquely 
exists a smooth $\phi\in \Omega^+(\ad E)$ satisfying
\[ \norm{\phi}_{L^\infty} < \infty, \quad \left(\nabla_B^*\nabla_B+\frac{S}{3}\right)\phi = \xi.\]
We will denote this $\phi$ by $(\nabla_B^*\nabla_B+S/3)^{-1}\xi$. 
It satisfies 
\[ |\phi(x)| \leq \int_X g(x,y)|\xi(y)| d\vol(y), \quad \norm{\phi}_{L^\infty} \lesssim \norm{\xi}_{L^\infty}.\]
Here $g(x,y) >0$ is the Green kernel of the operator $\nabla^*\nabla + S/3$
(this is the operator acting on functions). 
It is positive and uniformly integrable:
\[ \int_X g(x,y)d\vol(y) \lesssim 1 \quad (\text{independent of $x$}).\]
Moreover it decays exponentially: For $d(x,y)>1$
\[ 0< g(x,y) \lesssim e^{-\sqrt{S/3}\,d(x,y)} \quad (\text{$d(x,y)$: distance between $x$ and $y$}).\]
\end{lemma}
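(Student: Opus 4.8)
The plan is to reduce everything to the corresponding facts for the \emph{scalar} operator $L_0:=\nabla^*\nabla+S/3$ acting on functions on $X=\mathbb{R}\times S^3$, and then to pass from functions to $\ad E$-valued self-dual forms via Kato's inequality. First I would record the properties of the scalar Green kernel $g(x,y)$. Since $S>0$ is a constant and $X$ is complete with bounded geometry, $L_0$ is a positive self-adjoint operator with a spectral gap, so $g(x,y)$ exists and is smooth off the diagonal; writing $g(x,y)=\int_0^\infty e^{-St/3}p_t(x,y)\,dt$ with $p_t>0$ the scalar heat kernel shows $g(x,y)>0$. Exponential decay for $d(x,y)>1$ follows by inserting the Li--Yau Gaussian bound $p_t(x,y)\lesssim t^{-2}e^{-d(x,y)^2/(5t)}$ (available since $X$ has $\mathrm{Ric}\ge 0$ and bounded geometry) into this integral and optimizing in $t$; alternatively one separates variables over $S^3$, where on the $k$-th eigenspace of $\Delta_{S^3}$ the operator becomes $-\partial_t^2+(\lambda_k+S/3)$ with one-dimensional Green kernel $\tfrac{1}{2\sqrt{\lambda_k+S/3}}\,e^{-\sqrt{\lambda_k+S/3}\,|t-t'|}$, and $\lambda_k+S/3\ge S/3$ produces the rate $\sqrt{S/3}$. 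Uniform integrability is then immediate: $L_0(3/S)=1$ and $3/S$ is the unique bounded solution (by the maximum principle below), so $\int_X g(x,y)\,d\vol(y)=3/S$.

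Next, the a priori pointwise bound. Let $\phi\in\Omega^+(\ad E)$ be smooth and bounded with $L\phi=\xi$, where $L:=\nabla_B^*\nabla_B+S/3$. Kato's inequality gives, distributionally on $X$, $\nabla^*\nabla|\phi|\le\langle\nabla_B^*\nabla_B\phi,\phi\rangle/|\phi|$ (read as $0$ where $\phi=0$), hence $L_0|\phi|\le\langle L\phi,\phi\rangle/|\phi|=\langle\xi,\phi\rangle/|\phi|\le|\xi|$. Set $G[|\xi|](x):=\int_X g(x,y)|\xi(y)|\,d\vol(y)$, which is bounded with $\norm{G[|\xi|]}_{L^\infty}\le(3/S)\norm{\xi}_{L^\infty}$, and put $w:=|\phi|-G[|\xi|]$, a bounded weak subsolution of $L_0w\le0$. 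A Phragm\'{e}n--Lindel\"{o}f argument forces $w\le0$: for $c'<S/3$ one has $L_0\cosh(\sqrt{c'}\,t)=(S/3-c')\cosh(\sqrt{c'}\,t)>0$, so for each $\varepsilon>0$ and $R$ large enough that $\varepsilon\cosh(\sqrt{c'}\,R)>\norm{w}_{L^\infty}$ the weak maximum principle for $L_0$ on $[-R,R]\times S^3$ gives $w-\varepsilon\cosh(\sqrt{c'}\,t)\le0$ there; letting $R\to\infty$ and then $\varepsilon\to0$ yields $w\le0$ on $X$. Thus $|\phi(x)|\le\int_X g(x,y)|\xi(y)|\,d\vol(y)$ and $\norm{\phi}_{L^\infty}\lesssim\norm{\xi}_{L^\infty}$; taking $\xi=0$ gives uniqueness.

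For existence, exhaust $X$ by $X_n:=[-n,n]\times S^3$ and solve the Dirichlet problems $L\phi_n=\xi$, $\phi_n|_{\partial X_n}=0$; these are solvable and regular because the form $\int_{X_n}(|\nabla_B\phi|^2+(S/3)|\phi|^2)$ is coercive on $W^{1,2}_0$. The Kato estimate above, together with the comparison $g_n\le g$ of Green kernels (by the maximum principle, since $L(g-g_n)=0$ on $X_n$ and $g-g_n\ge0$ on $\partial X_n$), gives $|\phi_n(x)|\le\int_{X_n}g_n(x,y)|\xi(y)|\,d\vol(y)\le\int_X g(x,y)|\xi(y)|\,d\vol(y)$, a bound independent of $n$. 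Since $B$ has uniformly bounded curvature ($\norm{F_B}_{\mathrm{op}}<d-\tau$), one has uniform interior elliptic estimates in suitable local gauges, so a subsequence of $\{\phi_n\}$ converges in $C^\infty_{\mathrm{loc}}$ to a smooth $\phi$ with $L\phi=\xi$ satisfying all the stated bounds; this is $(\nabla_B^*\nabla_B+S/3)^{-1}\xi$.

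I expect the main obstacle to be the scalar Green kernel estimates: proving the exponential decay and the uniform integrability of $g(x,y)$ with constants that are genuinely uniform in the base point $x$. This is precisely where the product structure of $X=\mathbb{R}\times S^3$ — bounded geometry, invariance under translation in the $\mathbb{R}$-factor, and constant scalar curvature — must be used carefully, either through Gaussian heat-kernel estimates or through the explicit spectral decomposition over $S^3$; everything else is a routine combination of Kato's inequality, the $L^2$ method, and the weak maximum principle on the cylinder.
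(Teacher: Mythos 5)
Your proof is correct, and the approach — scalar Green kernel on the cylinder, Kato's inequality to reduce to the scalar operator $L_0=\nabla^*\nabla+S/3$, and a Phragm\'en--Lindel\"of comparison with $\cosh(\sqrt{c'}\,t)$ for uniqueness and the pointwise bound — is the natural one, and (judging from the Weitzenb\"ock/$L^\infty$ maximum-principle flavour of Lemma \ref{lemma: L^infty estimate} and the ``$L^2$ method'' used elsewhere) essentially the route taken in the cited Appendix of \cite{Matsuo-Tsukamoto}, to which the present paper defers without giving a proof.

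One small precision worth flagging: the Li--Yau Gaussian bound $p_t\lesssim V(x,\sqrt{t})^{-1}e^{-d^2/(5t)}$ does \emph{not} yield the stated rate $\sqrt{S/3}$ after optimising the Laplace integral — with the constant $5$ in the exponent you get $e^{-2d\sqrt{S/15}}$, which is strictly slower. To hit $\sqrt{S/3}$ you need the exponent $e^{-d^2/(4t)}$, which general Li--Yau gives only up to $\epsilon$-loss; your alternative argument — separation of variables over $S^3$, with the one-dimensional kernel $\tfrac{1}{2\sqrt{\lambda_k+S/3}}e^{-\sqrt{\lambda_k+S/3}|t-t'|}$ and lowest eigenvalue $\lambda_0=0$ — does give exactly $\sqrt{S/3}$ (combined with the trivial off-diagonal bound for $1<d(x,y)\lesssim 1$, where $d$ may be carried by the $S^3$-factor). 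Equivalently one can use the product heat kernel $p_t^X=p_t^{\mathbb{R}}\,p_t^{S^3}$ directly, which has the sharp Gaussian constant $4$. Also, your cosmetic constant $t^{-2}$ in front of the Gaussian is not the correct long-time prefactor on $\mathbb{R}\times S^3$ (volume growth is linear, not quartic, for $r\gg 1$), but this affects only the polynomial and not the exponential rate. Apart from these details the argument — existence via Dirichlet exhaustion, the bound $\int_X g(x,\cdot)=3/S$ from $L_0(3/S)=1$ plus uniqueness of bounded solutions, and interior elliptic estimates to extract a $C^\infty_{\mathrm{loc}}$-limit — is complete and correct.
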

\begin{lemma} \label{lemma: L^infty estimate of nabla_B phi}
Suppose $\xi\in \Omega^+(\ad E)$ is smooth and $\norm{\xi}_{L^\infty} < \infty$.
Then $\phi := (\nabla_B^*\nabla_B+S/3)^{-1}\xi$ satisfies 
\[ \norm{\phi}_{L^\infty} + \norm{\nabla_B \phi}_{L^\infty} \lesssim \norm{\xi}_{L^\infty}.\]
\end{lemma}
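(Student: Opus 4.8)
The estimate $\norm{\phi}_{L^\infty}\lesssim\norm{\xi}_{L^\infty}$ is already part of Lemma \ref{lemma: Green kernel estimate}, so the only thing left to prove is the gradient bound $\norm{\nabla_B\phi}_{L^\infty}\lesssim\norm{\xi}_{L^\infty}$. The plan is to deduce it from interior elliptic regularity applied on unit cylindrical slices, in exact parallel with the proof of Lemma \ref{lemma: in prop: preliminary result on the study of H^1_A}. The relevant equation is the Weitzenb\"ock form $(\nabla_B^*\nabla_B+S/3)\phi=\xi$, a second order elliptic system for $\phi$: if over a slice $\{\gamma<t<\gamma+1\}\times S^3$ one has a bundle trivialization in which the connection matrix of $B$ is bounded in every $C^k$ by a constant independent of $\gamma$, then Gilbarg--Trudinger \cite[Theorem 9.11]{Gilbarg-Trudinger} gives an interior $L^p_2$-bound for $\phi$ in terms of $\norm{\phi}_{L^p}+\norm{\xi}_{L^p}$ over the slice, and the Sobolev embedding $L^p_2\hookrightarrow C^1$ for $p>4$ converts this into
\[ \norm{\nabla_B\phi}_{L^\infty(\gamma+1/4<t<\gamma+3/4)}\lesssim\norm{\phi}_{L^\infty(\gamma<t<\gamma+1)}+\norm{\xi}_{L^\infty(\gamma<t<\gamma+1)}\lesssim\norm{\xi}_{L^\infty},\]
the last inequality by Lemma \ref{lemma: Green kernel estimate}. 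Taking the supremum over $\gamma\in\mathbb{R}$ and adding the $L^\infty$-bound on $\phi$ itself would finish the proof. (The Green kernel bound of Lemma \ref{lemma: Green kernel estimate} alone is not quite enough, since no derivative estimate on $g(x,y)$ is recorded there; hence the detour through elliptic regularity.)

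So the real work is to produce, for each $\gamma\in\mathbb{R}$, a trivialization of $E$ over $\{\gamma<t<\gamma+1\}\times S^3$ in which $\norm{B}_{C^k}\lesssim C(k)$, with constants independent of $\gamma$ (and of $T$ and $\delta$). I would check this case by case according to the three regimes of the cut and paste of Section \ref{subsection: cut and paste}. Where the slice lies in a region on which $B=A$: since $A$ is ASD with $\norm{F_A}_{\mathrm{op}}<d$ we have $[A]\in\moduli_d$, and the compactness of $\moduli_d$ (Uhlenbeck compactness) forces all connections in $\moduli_d$, hence all translates of $A$, to have uniformly bounded local geometry, so such a trivialization exists; the same holds where $B$ is a translate $I_{nT+T/2}$ of the BPST instanton, since $[I]\in\moduli_1\subset\moduli_d$. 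On the gluing regions $U$ we already arranged in Section \ref{subsection: cut and paste} that, in suitable trivializations, $\norm{A}_{C^k(U)}<C(k)\delta$ and $\norm{I_{T/2}}_{C^k(U)}<C(k)\delta$, while the cut-off $\varphi$ has all derivatives bounded, so $B=(1-\varphi)A+\varphi I_{T/2}$ satisfies $\norm{B}_{C^k}\lesssim C(k)$ there too. Patching these charts covers every unit slice by finitely many trivializations carrying uniform $C^k$-bounds on $B$, which is exactly what the interior estimate above needs.

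The hard part is really only this bookkeeping of constants: one must be sure that the bounds in the local gauges for $B$, and hence in the final inequality, depend only on $d$ (and on the differentiation order), and in particular not on the large parameter $T$ nor on the small parameter $\delta$. This is transparent in each of the three regimes taken separately, but it is the one place that needs genuine care; everything else is a routine application of interior elliptic estimates and the Sobolev embedding theorem, identical in spirit to Lemma \ref{lemma: in prop: preliminary result on the study of H^1_A}.
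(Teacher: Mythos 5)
Your proposal is correct and follows essentially the same route as the paper's own proof: reduce to a gradient bound, establish uniform local $C^k$-bounds on the connection matrix of $B$ over each unit slice $(\gamma,\gamma+1)\times S^3$ (which is exactly what the paper invokes via "the compactness of $\moduli_d$ and the construction of $B$"), apply the interior $L^p$-estimate of Gilbarg--Trudinger \cite[Theorem 9.11]{Gilbarg-Trudinger} together with the $L^\infty$-bound on $\phi$ from Lemma \ref{lemma: Green kernel estimate}, and finish with the Sobolev embedding $L^p_2\hookrightarrow C^1$ (the paper phrases it as $L^p_1\hookrightarrow C^0$ applied to $\nabla_B\phi$, which is the same thing). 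Your case-by-case verification of the uniform local geometry of $B$ across the three regimes of the cut-and-paste is a useful and correct expansion of the single sentence the paper devotes to this point.
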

\begin{proof}
$\norm{\phi}_{L^\infty}\lesssim \norm{\xi}_{L^\infty}$ was already given in Lemma \ref{lemma: Green kernel estimate}.
So we want to prove $\norm{\nabla_B \phi}_{L^\infty} \lesssim \norm{\xi}_{L^\infty}$.
From the compactness of $\moduli_d$ (or the Uhlenbeck compactness) and the construction of $B$, 
for any $s \in \mathbb{R}$ 
we can choose a connection matrix of $B$ over $(s,s+1)\times S^3$ satisfying
\[ \norm{B}_{C^k(s<t<s+1)} \lesssim C(k) \quad (\forall k\geq 0).\]
Then from the $L^p$-estimate (Gilbarg--Trudinger \cite[Theorem 9.11]{Gilbarg-Trudinger})
and $\norm{\phi}_{L^\infty}\lesssim \norm{\xi}_{L^\infty}$,
for $1<p<\infty$
\begin{equation} \label{eq: L^p estimate in gluing}
  \norm{\phi}_{L^p_{2,B}(s+1/4<t<s+3/4)} \lesssim C(p)\norm{\xi}_{L^\infty(X)}.
\end{equation}
Then the desired estimate $\norm{\nabla_B \phi}_{L^\infty}\lesssim \norm{\xi}_{L^\infty}$
follows from the Sobolev embedding $L^p_1\hookrightarrow C^0$ ($p>4$).
\end{proof}
Set $\phi := 2(\nabla_B^*\nabla_B+S/3)^{-1}\xi$ where $\xi\in \Omega^+(\ad E)$ is smooth and 
$\norm{\xi}_{L^\infty} < \infty$.
We want to solve the equation 
$F^+(B+d_B^*\phi)=0$, i.e.
\[ \xi = -F_B^+ -F_B^+\cdot \phi -(d_B^*\phi\wedge d_B^*\phi)^+.\]
Set $Q(\xi) := -F_B^+ -F_B^+\cdot \phi -(d_B^*\phi\wedge d_B^*\phi)^+$.
From $\norm{F_B^+}_{L^\infty} \lesssim \delta$ and Lemma \ref{lemma: L^infty estimate of nabla_B phi},
\[ \norm{Q(\xi)-Q(\eta)}_{L^\infty} \lesssim 
  (\delta+\norm{\xi}_{L^\infty}+\norm{\eta}_{L^\infty})\norm{\xi-\eta}_{L^\infty}.\]
Then we can easily check that (when $\delta\ll 1$) the sequence $\{\xi_n\} \subset \Omega^+(\ad E)$ defined by 
\[ \xi_0:=0, \quad \xi_{n+1}:=Q(\xi_n) \]
satisfies $\norm{\xi_n}_{L^\infty}\lesssim \delta$ (the implicit constant is independent of $n$) and
becomes a Cauchy sequence in $L^\infty(X)$.
Let $\xi_n\to \xi_\infty$ in $L^\infty(X)$.
We have $\norm{\xi_\infty}_{L^\infty} \lesssim \delta$.
We will show that $\xi_\infty$ is smooth and satisfies $Q(\xi_\infty)=\xi_\infty$.

Set $\phi_n:=2(\nabla_B^*\nabla_B+S/3)^{-1}\xi_n$. Then 
\begin{equation} \label{eq: recursive relation of xi_n}
 \xi_{n+1} = Q(\xi_n) = -F_B^+-F_B^+\cdot \phi_n -(d_B^*\phi_n\wedge d_B^*\phi_n)^+.
\end{equation}
From the above (\ref{eq: L^p estimate in gluing}) and $\norm{\xi_n}_{L^\infty}\lesssim \delta$,
the sequence $\{\phi_n\}$ is bounded in $L^p_{2,B}(K)$ for every $1<p<\infty$ and compact subset $K\subset X$.
Then from the equation (\ref{eq: recursive relation of xi_n}) the sequence $\{\xi_n\}$ is bounded in $L^p_{1,B}(K)$.
In the same way (the standard bootstrapping argument) we can show that the sequence $\{\xi_n\}$ is bounded in 
$L^p_{k,B}(K)$ for every $k\geq 0$, $1<p<\infty$ and compact subset $K\subset X$.
Therefore $\xi_\infty$ is smooth, and $\xi_n$ converges to $\xi_\infty$ in $C^\infty$ over every compact subset.
Then 
\begin{equation} \label{eq: xi_infty}
    \xi_\infty = -F_B^+-F_B^+\cdot\phi_\infty -(d_B^*\phi_\infty\wedge d_B^*\phi_\infty)^+, \quad
   (\phi_\infty := 2(\nabla_B^*\nabla_B+S/3)^{-1}\xi_\infty).
\end{equation}
Set $\tilde{A}:=B+d_B^*\phi_\infty$.
The connection $\tilde{A}$ is ASD.
The rest of the work is to show that $\tilde{A}$ is non-degenerate and satisfies the condition 
(\ref{eq: condition for tilde A}).

From Lemma \ref{lemma: L^infty estimate of nabla_B phi}, 
$\norm{\phi_\infty}_{L^\infty}+\norm{\nabla_B\phi_\infty}_{L^\infty}\lesssim \norm{\xi_\infty}_{L^\infty}
\lesssim \delta$.
Moreover the equation 
\[ d_B^+d_B^*\phi_\infty + (d_B^*\phi_\infty\wedge d_B^*\phi_\infty)^+ = -F_B^+\]
and $\norm{F_B^+}_{L^\infty}+\norm{\nabla_BF_B^+}_{L^\infty}\lesssim \delta$ (see (\ref{eq: norm of F_B^+}))
implies $\norm{\nabla_B\nabla_B\phi_\infty}_{L^\infty} \lesssim \delta$.
(See the proof of Lemma \ref{lemma: L^infty estimate of nabla_B phi}.)
Hence the curvature 
\[ F(\tilde{A}) = F_B + d_Bd_B^*\phi_\infty + d_B^*\phi_\infty\wedge d_B^*\phi_\infty\]
satisfies $\norm{F(\tilde{A})-F_B}_{L^\infty} \lesssim \delta$.
Since $B$ satisfies $\norm{F_B}_{\mathrm{op}} < d-\tau$ and $\rho(B)>\rho(A)-\varepsilon/2$ 
(see (\ref{eq: op norm of F_B}) and (\ref{eq: energy density of B})), if $\delta =\delta(\varepsilon,\tau)\ll 1$,
we get 
\[ \norm{F(\tilde{A})}_{\mathrm{op}} < d, \quad \rho(\tilde{A}) > \rho(A)-\varepsilon.\]
Therefore $\tilde{A}$ satisfies the condition (\ref{eq: condition for tilde A}).

Finally we show that $\tilde{A}$ is non-degenerate.
It is enough to prove that for all $n\in \mathbb{Z}$ the connection $\tilde{A}$ satisfies 
(see Lemma \ref{lemma: equivalent condition of non-degeneracy})
\begin{equation}\label{eq: non-degeneracy of tilde A}
   \norm{F(\tilde{A})}_{L^\infty(nT<t<(n+1)T)} > \delta/2.
\end{equation}
When $n\in J$, we have $\norm{F_B}_{L^\infty(nT<t<(n+1)T)} \geq 1$
(see (\ref{eq: nondegeneracy of B}))
and $\norm{F(\tilde{A})-F_B}_{L^\infty} \lesssim \delta\ll 1$.
So the above (\ref{eq: non-degeneracy of tilde A}) holds for $n\in J$.

Choose $n\not \in J$. For simplicity, we suppose $n=0$.
From the Green kernel estimate in Lemma \ref{lemma: Green kernel estimate},
\[ |\phi_\infty(x)|\leq 2\int_X g(x,y)|\xi_\infty(y)|d\vol(y) .\]
From (\ref{eq: xi_infty}) and $|F_B^+|,|\phi_\infty|,|\nabla_B\phi_\infty|\lesssim \delta$, 
\[ |\xi_\infty| \lesssim |F_B^+| + \delta^2.\]
Since $0\not\in J$, the distance between $(-1,T+1)\times S^3$ and $\supp(F_B^+)$ is $\gtrsim T$.
The Green kernel $g(x,y)$ decays exponentially. So if we choose $T=T(\varepsilon, \tau,\delta)$ sufficiently large,
then
\[ \norm{\phi_\infty}_{L^\infty(-1<t<T+1)} \lesssim \delta^2.\]
$\phi_\infty$ satisfies the following equation over $(-1,T+1)\times S^3$:
\[ d_B^+d_B^*\phi_\infty = -(d_B^*\phi_\infty\wedge d_B^*\phi_\infty)^+.\]
Since $\norm{d_B^*\phi_\infty\wedge d_B^*\phi_\infty}_{L^\infty}\lesssim 
\norm{\nabla_B\phi_\infty}^2_{L^\infty} \lesssim \delta^2$,
the bootstrapping argument shows 
\[ \norm{\nabla_B\nabla_B\phi_\infty}_{L^\infty(0<t<T)} \lesssim \delta^2.\]
Therefore $|F(\tilde{A})-F_B|\lesssim \delta^2$ over $(0,T)\times S^3$.
Since $\norm{F_B}_{L^\infty(0<t<T)}\geq \delta$ (see (\ref{eq: nondegeneracy of B})) and $\delta\ll 1$, we get 
(\ref{eq: non-degeneracy of tilde A}) for $n=0$.
We have finished the proof of Theorem \ref{thm: abundance of non-degenerate ASD connections}.

\begin{remark} \label{remark: final remark}
If we start with the trivial flat connection $A$ in this gluing argument,
then we can make the argument invariant under the action of the subgroup 
$T\mathbb{Z}\subset \mathbb{R}$. 
Then the resulting non-degenerate ASD connection $\tilde{A}$ becomes periodic
(Example \ref{example: periodic ASD connection}).
So we can conclude that the space $\moduli_d$ $(d>1)$ always contains a non-flat periodic ASD connection.
\end{remark}

\appendix

\section{Another ASD moduli space}
Here we briefly discuss another possibility of the definition of the ASD moduli space.
Let $X=\mathbb{R}\times S^3$ and $E=X\times SU(2)$ as in the main body of the paper.
For $d\geq 0$ we define $\mathcal{N}_d$ as the space of the gauge equivalence classes of ASD connections $A$ on 
$E$ satisfying 
\[ \norm{F_A}_{L^\infty(X)}\leq d.\]
Note that here we use the $L^\infty$-norm, which is different from the operator norm used in the definition of $\moduli_d$.
The space $\mathcal{N}_d$ is endowed with the topology of $C^\infty$-convergence over compact subsets.
$\mathcal{N}_d$ is compact and metrizable, and it admits a natural $\mathbb{R}$-action.
The paper \cite{Matsuo-Tsukamoto} studies the mean dimension and local mean dimension of this $\mathcal{N}_d$.
In particular \cite[Theorem 1.2]{Matsuo-Tsukamoto} shows the following upper bound on the local mean dimension:
\begin{theorem} \label{thm: upper bound on the local mean dimension of N_d}
For any $[A]\in \mathcal{N}_d$,
\[ \dim_{[A]}(\mathcal{N}_d:\mathbb{R}) \leq 8\rho(A).\]
\end{theorem}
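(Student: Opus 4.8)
The quickest route, granting the results of the main text, is to realize $\mathcal{N}_d$ as a closed invariant subsystem of $\moduli_d$. Cauchy--Schwarz gives $|F_A(p)|_{\mathrm{op}}\le |F_A(p)|$ at every point, hence $\norm{F_A}_{\mathrm{op}}\le \norm{F_A}_{L^\infty(X)}$, so $\mathcal{N}_d\subset\moduli_d$; both carry the topology of $C^\infty$-convergence over compact subsets, so $\mathcal{N}_d$ is a closed $\mathbb{R}$-invariant subset of $\moduli_d$. Local mean dimension is monotone under closed invariant subsets: equipping $\mathcal{N}_d$ with the restriction of a metric on $\moduli_d$ (harmless, since the invariant is topological), the orbit-ball $B_r([A])_{\mathbb{R}}$ computed in $\mathcal{N}_d$ lies inside the one computed in $\moduli_d$, and $\widim_\varepsilon$ only decreases on subsets (an $\varepsilon$-embedding of the larger set restricts to one of the smaller, into the same polyhedron). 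Hence $\dim(B_r([A])_{\mathbb{R}}:\mathbb{R})$ computed in $\mathcal{N}_d$ is at most that computed in $\moduli_d$ for every $r$, and letting $r\to0$ followed by Theorem \ref{thm: upper bound} yields $\dim_{[A]}(\mathcal{N}_d:\mathbb{R})\le \dim_{[A]}(\moduli_d:\mathbb{R})\le 8\rho(A)$.

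That argument is logically upside-down for the Appendix, whose purpose is to \emph{derive} the $\moduli_d$-statement from the $\mathcal{N}_d$-statement, so the substantive content is precisely \cite[Theorem 1.2]{Matsuo-Tsukamoto}. A self-contained proof would follow the scheme of that paper: unwinding the definitions of Section \ref{section: review of mean dimension and local mean dimension}, it suffices to produce, for each sufficiently small $r>0$ and each window $(x,x+L)\times S^3$, an $\varepsilon$-embedding of $(B_r([A])_{\mathbb{R}},\dist_{(x,x+L)})$ into a polyhedron of dimension at most $\frac{1}{\pi^2}\int_{x<t<x+L}|F_B|^2 d\vol+O(1)$ with the $O(1)$ uniform; since $[B]$ being $r$-close to $[A]$ forces the window energies of $B$ and $A$ to differ by $O_r(1)\cdot L$, taking the supremum over $x$, dividing by $L$, and sending $L\to\infty$, $\varepsilon\to0$, $r\to0$ produces $8\rho(A)$. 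To build such an embedding I would: (i) use Uhlenbeck compactness to put every $[B]$ near $[A]$, over $(x-1,x+L+1)\times S^3$, into Coulomb gauge relative to a translate of $A$, writing $B=A+a$ with $d_A^*a=0$, $\norm{a}_{L^\infty}\ll 1$ and $F^+(A+a)=O(a^2)$; (ii) apply interior elliptic estimates to bound $a$ and its derivatives on the interior window by $\norm{a}_{L^\infty}$; (iii) apply the Atiyah--Singer index theorem to $D_A=d_A^*\oplus d_A^+$ on the closed $4$-manifold obtained by capping off $(x,x+L)\times S^3$ (which has $b_1=1$, $b_+=0$ and instanton number $\frac{1}{8\pi^2}\int_{x<t<x+L}|F|^2 d\vol+O(1)$), so that the deformation data localized to the window is $\varepsilon$-approximated by a linear space of the stated dimension. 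The embedding then records, up to $\varepsilon$, the gauge-fixed data $a$ on the window projected onto that finite-dimensional space.

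The main obstacle is the one highlighted throughout the paper: gauge transformations without asymptotic control do not form a Banach Lie group, so step (i) --- choosing a uniformly good gauge over the growing windows and making the resulting parametrization of $B_r([A])_{\mathbb{R}}$ honestly uniform in $[B]$ --- is the delicate part, and it is also where one must see that the error $O(1)$ in step (iii), coming from the capping region and from the nonlinear term $F^+(A+a)=O(a^2)$, does not degrade for connections $B$ merely close to $A$. Granting those uniformities, the passage through mean dimension to local mean dimension is routine, using the sub-additivity recalled in Section \ref{section: review of mean dimension and local mean dimension}.
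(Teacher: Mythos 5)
The paper does not prove this theorem: it is simply quoted from \cite[Theorem 1.2]{Matsuo-Tsukamoto}, and it is the imported foundation from which the Appendix \emph{derives} Theorem \ref{thm: upper bound} (the $\moduli_d$ version) via the inclusion $\moduli_d\subset\mathcal{N}_{\sqrt{3}\,d}$. You are right that your first paragraph is logically backwards in the present paper's structure: Theorem \ref{thm: upper bound} is itself obtained from Theorem \ref{thm: upper bound on the local mean dimension of N_d}, so invoking it here would be circular. The monotonicity observation you make --- that a closed $\mathbb{R}$-invariant subset with the restricted metric has smaller or equal local mean dimension, because orbit-balls shrink and $\widim_\varepsilon$ decreases on subsets --- is correct as a piece of mathematics, and is exactly the tool the Appendix uses (in the other direction), but it cannot serve as a proof of this theorem in this paper.

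Your second and third paragraphs outline the right ingredients for a genuine proof (Uhlenbeck compactness, Coulomb gauge, interior elliptic estimates, index theorem on a capped-off window), and this matches the general technology used throughout the paper and in \cite{Matsuo-Tsukamoto}. But as written it is a roadmap, not a proof: the crucial steps are left conditional ("granting those uniformities"), and the quantitative content --- that the $\varepsilon$-embedding can be built with a polyhedron of dimension $\tfrac{1}{\pi^2}\int_{x<t<x+L}|F_B|^2 + O(1)$ with the $O(1)$ uniform over all $[B]\in B_r([A])_{\mathbb{R}}$ and all windows --- is precisely the substance of \cite[Theorem 1.2]{Matsuo-Tsukamoto} and is not established here. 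So the accurate assessment is: you correctly identified that the statement is an external citation and correctly identified the danger of circularity, but you have not supplied a proof; you have supplied a sketch whose hardest step (uniform gauge fixing without asymptotic control, and the resulting uniform error control) you explicitly defer.
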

If $A$ is an ASD connection on $E$, then the operator norm $|F_A|_{\mathrm{op}}$ and the Euclidean norm 
$|F_A|$ bound each other by 
\[ \frac{1}{\sqrt{3}} |F_A| \leq |F_A|_{\mathrm{op}} \leq |F_A|.\]
(This uses the ASD condition.) Hence 
\[ \mathcal{N}_d \subset \moduli_d \subset \mathcal{N}_{\sqrt{3}\,d}.\]
Then for any $[A]\in \moduli_d$
\[ \dim_{[A]}(\moduli_d:\mathbb{R}) \leq \dim_{[A]}(\mathcal{N}_{\sqrt{3}\,d}:\mathbb{R}) \leq 8\rho(A).\]
This is Theorem \ref{thm: upper bound} in Section \ref{subsection: non-degenerate ASD connections}.
From the knowledge on $\moduli_d$ we can prove the results on $\mathcal{N}_d$:
\begin{theorem}
Let $A$ be a non-degenerate ASD connection on $E$ with $\norm{F_A}_{L^\infty} <d$.
Then 
\[ \dim_{[A]}(\mathcal{N}_d:\mathbb{R}) = 8\rho(A).\]
\end{theorem}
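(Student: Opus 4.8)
The plan is to follow the same two-sided strategy as in the proof of Theorem \ref{thm: main theorem}. The upper bound $\dim_{[A]}(\mathcal{N}_d:\mathbb{R}) \le 8\rho(A)$ is already available, being precisely Theorem \ref{thm: upper bound on the local mean dimension of N_d} quoted from \cite{Matsuo-Tsukamoto}. So the entire content is the reverse inequality $\dim_{[A]}(\mathcal{N}_d:\mathbb{R}) \ge 8\rho(A)$, and the idea is to observe that the deformation machinery of Section \ref{section: deformation theory}, built to produce points of $\moduli_d$, actually produces points of the smaller space $\mathcal{N}_d$ once one starts from a connection whose curvature is small in $L^\infty$ rather than merely in operator norm.

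Concretely, I would first note that since $A$ is ASD we have $\norm{F_A}_{\mathrm{op}} \le \norm{F_A}_{L^\infty(X)} < d$, so $A$ is a non-degenerate ASD connection with $\norm{F_A}_{\mathrm{op}} < d$ and the constructions of Section \ref{subsection: deformation theory} apply verbatim, giving for small $R$ the deformation map $B_R(H^1_A) \to \moduli_d$, $a\mapsto [A+a']$, with $a'=a+d_A^*\phi_a$ and $\norm{\phi_a}_{\ell^\infty L^2_5}\lesssim_A \norm{a}_{L^\infty(X)}$. The key extra observation is that $F(A+a') = F_A + d_A a' + a'\wedge a'$, so combining the elliptic estimates for elements of $H^1_A$, the $\ell^\infty L^2_k$ bounds on $\phi_a$, and the Sobolev embeddings used in Section \ref{subsection: deformation theory}, one gets $\norm{a'}_{L^\infty(X)} + \norm{\nabla_A a'}_{L^\infty(X)} \lesssim_A R$ and hence $\norm{F(A+a') - F_A}_{L^\infty(X)} \lesssim_A R$. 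Since $\norm{F_A}_{L^\infty(X)} < d$ by hypothesis, shrinking $R$ further yields $\norm{F(A+a')}_{L^\infty(X)} \le d$ for all $a\in B_R(H^1_A)$; that is, the deformation map in fact takes values in $\mathcal{N}_d \subset \moduli_d$.

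Granting this, the rest of Section \ref{section: deformation theory} carries over word for word with $\moduli_d$ replaced by $\mathcal{N}_d$. The distance on $\mathcal{N}_d$ is the restriction of the distance on $\moduli_d$, and the ingredients feeding Lemma \ref{lemma: distortion of the deformation map} (Proposition \ref{prop: Coulomb gauge}, Uhlenbeck compactness, and Proposition \ref{prop: main result of the study of H^1_A}) are statements about connections themselves, insensitive to which moduli space one works in. Hence Proposition \ref{prop: widim estimate for B_r([A])_R} holds with $B_r([A])_{\mathbb{R}}$ taken inside $\mathcal{N}_d$, and rerunning the computation of Section \ref{subsection: proof of Theorem local mean dimension around A} gives $\dim_{[A]}(\mathcal{N}_d:\mathbb{R}) \ge 8\rho(A)$, which together with the upper bound proves the theorem. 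I do not expect a serious obstacle: the only point needing care is the $L^\infty$-control $\norm{F(A+a') - F_A}_{L^\infty(X)} \lesssim_A R$, which is a bookkeeping check using estimates already established in Section \ref{subsection: deformation theory} rather than any new analysis. The real content is simply the inclusion $\mathcal{N}_d \subset \moduli_d$ together with the remark that, under the stronger hypothesis $\norm{F_A}_{L^\infty} < d$, the deformation lands on the correct side of it.
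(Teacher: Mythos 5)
Your proof is correct, but the paper takes a noticeably shorter formal route. You observe that the deformation map of Section \ref{subsection: deformation theory} actually lands in $\mathcal{N}_d$ once one verifies $\norm{F(A+a')-F_A}_{L^\infty(X)}\lesssim_A R$, and you then rerun Proposition \ref{prop: widim estimate for B_r([A])_R} and Section \ref{subsection: proof of Theorem local mean dimension around A} with $\moduli_d$ replaced by $\mathcal{N}_d$. The paper instead avoids reopening the deformation argument entirely: it endows $\mathcal{N}_d$ with the restriction of the metric on $\moduli_d$, notes that for $r$ sufficiently small the strict inequality $\norm{F_A}_{L^\infty}<d$ forces $B_r([A];\mathcal{N}_d)_{\mathbb{R}} = B_r([A];\moduli_d)_{\mathbb{R}}$ (anything $\dist_{\mathbb{R}}$-close to $[A]$ has curvature close to $F_A$ in $L^\infty$, by Uhlenbeck compactness and elliptic regularity, hence lies in $\mathcal{N}_d$), and then simply quotes Theorem \ref{thm: local mean dimension around non-degenerate ASD connection}, since local mean dimension at $[A]$ depends only on arbitrarily small $\dist_{\mathbb{R}}$-balls around $[A]$. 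Both arguments are sound and both hinge on the strictness of $\norm{F_A}_{L^\infty}<d$; your version is more hands-on and makes visible where the $L^\infty$-control on $F(A+a')$ comes from, while the paper's is a softer two-line reduction that buys brevity at the cost of implicitly re-invoking compactness to identify the small balls.
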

\begin{proof}
We assume that $\moduli_d$ is endowed with a distance and that $\mathcal{N}_d$ is endowed with its restriction.
Then we have 
$B_r([A];\mathcal{N}_d)_{\mathbb{R}} = B_r([A];\moduli_d)_{\mathbb{R}}$
for sufficiently small $r>0$.
Hence by Theorem \ref{thm: local mean dimension around non-degenerate ASD connection}
\[ \dim_{[A]}(\mathcal{N}_d:\mathbb{R}) = \dim_{[A]}(\moduli_d:\mathbb{R}) = 8\rho(A).\]
\end{proof}
\begin{theorem}
Suppose $d>\sqrt{3}$, and let $A$ be an ASD connection on $E$ with $\norm{F_A}_{L^\infty}<d$.
For any $\varepsilon>0$ there exists a non-degenerate ASD connection $\tilde{A}$ on $E$ satisfying 
\[ \norm{F(\tilde{A})}_{L^\infty} < d, \quad \rho(\tilde{A}) > \rho(A)-\varepsilon.\]
\end{theorem}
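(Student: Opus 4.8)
The plan is to repeat, essentially word for word, the gluing argument of Section~\ref{section: gluing infinitely many instantons}, tracking the Euclidean $L^\infty$-norm of the curvature in place of the operator norm throughout. If $A$ is already non-degenerate we take $\tilde A=A$, so assume $A$ is degenerate.

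The one new observation needed is a bound on $\norm{F_I}_{L^\infty}$ for the model instanton $I$ of Example~\ref{example: BPST instanton}. Since $I$ is ASD, the pointwise inequality $|F_I|\le\sqrt 3\,|F_I|_{\mathrm{op}}$ recalled in this appendix, together with $\norm{F_I}_{\mathrm{op}}=1$, gives $\norm{F_I}_{L^\infty}\le\sqrt 3$, and the same holds for every translate $I_s$. As $d>\sqrt 3$ and $\norm{F_A}_{L^\infty}<d$, we may fix $\tau>0$ with
\[ \norm{F_A}_{L^\infty}<d-\tau,\qquad d-\tau>\sqrt 3, \]
so that both $A$ and every $I_s$ have curvature of Euclidean $L^\infty$-norm $<d-\tau$ --- precisely the room that the cut-and-paste consumes.

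First I would carry out the cut-and-paste of Section~\ref{subsection: cut and paste} unchanged: with $0<\delta\ll1$ and $T\gg1$, glue a copy of $I_{nT+T/2}$ into $A$ over each degenerate block $[nT,(n+1)T]\times S^3$, $n\in J$. On the interpolation annuli $U$ one has $\norm{B}_{C^k(U)}=O(\delta)$ and hence $|F_B|=O(\delta)$ there, while off these annuli $F_B$ equals $F_A$ (of norm $<d-\tau$) or some $F_{I_s}$ (of norm $\le\sqrt 3<d-\tau$); therefore $\norm{F_B}_{L^\infty}<d-\tau$, the $L^\infty$-analogue of~(\ref{eq: op norm of F_B}). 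The estimates~(\ref{eq: energy density of B}), (\ref{eq: nondegeneracy of B}) and~(\ref{eq: norm of F_B^+}) hold verbatim, as they involve only the Euclidean norms of $F_B$ and $F_B^+$. Next I would run the perturbation of Section~\ref{subsection: perturbation} without change: solving $F^+(B+d_B^*\phi)=0$ via Lemmas~\ref{lemma: Green kernel estimate} and~\ref{lemma: L^infty estimate of nabla_B phi} yields an ASD connection $\tilde A=B+d_B^*\phi_\infty$ with $\norm{F(\tilde A)-F_B}_{L^\infty}\lesssim\delta$, an estimate already proved there in the Euclidean norm. Hence, for $\delta=\delta(\varepsilon,\tau)$ small and a constant $C$,
\[ \norm{F(\tilde A)}_{L^\infty}\le\norm{F_B}_{L^\infty}+C\delta<d-\tau+C\delta<d, \]
and, exactly as in Section~\ref{subsection: perturbation}, $\rho(\tilde A)>\rho(A)-\varepsilon$ and $\tilde A$ is non-degenerate (the Green-kernel decay argument bounding $\norm{F(\tilde A)-F_B}_{L^\infty}$ over the non-glued blocks by $O(\delta^2)$ is untouched).

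There is no essential obstacle here: the whole content is checking that switching from $\norm{\cdot}_{\mathrm{op}}$ to $\norm{\cdot}_{L^\infty}$ costs nothing at the two places where a curvature norm is controlled --- the bound $\norm{F_B}_{L^\infty}<d-\tau$ and the final bound $\norm{F(\tilde A)}_{L^\infty}<d$ --- which is immediate since $\norm{F_I}_{L^\infty}\le\sqrt 3$ and since $\norm{F(\tilde A)-F_B}_{L^\infty}\lesssim\delta$ already dominates all directions. This is exactly the point where the hypothesis $d>\sqrt 3$ replaces the condition $d>1$ of Theorem~\ref{thm: abundance of non-degenerate ASD connections}: the threshold $1=\norm{F_I}_{\mathrm{op}}$ there is superseded by the threshold $\sqrt 3$, an upper bound for $\norm{F_I}_{L^\infty}$, here.
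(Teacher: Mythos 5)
Your proposal is correct and follows the paper's own approach exactly: observe that $\norm{F_I}_{L^\infty}\le\sqrt 3$ for the model instanton (the paper computes $\norm{F_I}_{L^\infty}=\sqrt 3$ directly from the explicit formula, you derive the bound from $|F_I|\le\sqrt 3\,|F_I|_{\mathrm{op}}$, but this is an inessential difference) and then rerun the gluing construction of Section~\ref{section: gluing infinitely many instantons} verbatim, noting that all the relevant estimates there are already phrased in terms of the Euclidean norm.
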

\begin{proof}
The point is that the instanton $I$ defined in Example \ref{example: BPST instanton} satisfies 
\[ |F_I(t,\theta)| = \frac{4\sqrt{3}}{(e^t+e^{-t})^2}, \quad \norm{F_I}_{L^\infty} =\sqrt{3}.\]
Then the gluing construction in Section \ref{section: gluing infinitely many instantons}
gives the result.
\end{proof}

Let $\rho_{\mathcal{N}}(d)$ be the supremum of $\rho(A)$ over $[A]\in \mathcal{N}_d$.
Let $\mathcal{D}_{\mathcal{N}}\subset [0,+\infty)$ be the set of left-discontinuous points of 
$\rho_{\mathcal{N}}(d)$.
This is at most countable.
From the above theorems, we can prove the following theorem.
(The proof is the same as the proof of Theorem \ref{thm: main theorem}.)
\begin{theorem}
For any $d\in (\sqrt{3},+\infty)\setminus \mathcal{D}_{\mathcal{N}}$,
\[ \dim_{loc}(\mathcal{N}_d:\mathbb{R}) = 8\rho_{\mathcal{N}}(d).\]
\end{theorem}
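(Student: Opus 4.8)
The plan is to mimic, essentially line for line, the proof of Theorem \ref{thm: main theorem}, substituting $\mathcal{N}_d$ for $\moduli_d$ and the three $\mathcal{N}_d$-analogues proved above (Theorem \ref{thm: upper bound on the local mean dimension of N_d} and the two unlabelled theorems of this appendix) for Theorems \ref{thm: upper bound}, \ref{thm: local mean dimension around non-degenerate ASD connection} and \ref{thm: abundance of non-degenerate ASD connections}. Since all the analytic content has already been packaged into those three statements, the argument will be purely formal bookkeeping with suprema and one-sided limits.

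First I would dispose of the upper bound. By Theorem \ref{thm: upper bound on the local mean dimension of N_d} we have $\dim_{[A]}(\mathcal{N}_d:\mathbb{R})\le 8\rho(A)$ for every $[A]\in\mathcal{N}_d$; taking the supremum over $[A]$ and recalling that $\rho_{\mathcal{N}}(d)$ is by definition the supremum of $\rho(A)$ over $\mathcal{N}_d$ gives $\dim_{loc}(\mathcal{N}_d:\mathbb{R})\le 8\rho_{\mathcal{N}}(d)$.

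For the lower bound I would introduce $\rho_{\mathcal{N},0}(d)$, the supremum of $\rho(A)$ over those $[A]\in\mathcal{N}_d$ satisfying the \emph{strict} inequality $\norm{F_A}_{L^\infty(X)}<d$; trivially $\rho_{\mathcal{N},0}(d)\le\rho_{\mathcal{N}}(d)$. The key step is to prove $\dim_{loc}(\mathcal{N}_d:\mathbb{R})\ge 8\rho_{\mathcal{N},0}(d)$. Given such an $[A]$ and $\varepsilon>0$, I would invoke the appendix analogue of Theorem \ref{thm: abundance of non-degenerate ASD connections} — this is exactly where the hypothesis $d>\sqrt3$ is used, since the glued-in instanton $I$ has Euclidean curvature norm $\norm{F_I}_{L^\infty}=\sqrt3$ rather than $1$ — to obtain a non-degenerate ASD connection $\tilde{A}$ with $\norm{F(\tilde{A})}_{L^\infty}<d$ and $\rho(\tilde{A})>\rho(A)-\varepsilon$; then the appendix analogue of Theorem \ref{thm: local mean dimension around non-degenerate ASD connection} applied to $\tilde{A}$ gives $\dim_{loc}(\mathcal{N}_d:\mathbb{R})\ge\dim_{[\tilde{A}]}(\mathcal{N}_d:\mathbb{R})=8\rho(\tilde{A})>8(\rho(A)-\varepsilon)$. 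Letting $\varepsilon\to 0$ and then taking the supremum over admissible $[A]$ yields the claim.

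Finally I would close the gap between $\rho_{\mathcal{N},0}$ and $\rho_{\mathcal{N}}$. For every $\varepsilon>0$ one has $\rho_{\mathcal{N}}(d-\varepsilon)\le\rho_{\mathcal{N},0}(d)\le\rho_{\mathcal{N}}(d)$, because any $[A]\in\mathcal{N}_{d-\varepsilon}$ automatically satisfies $\norm{F_A}_{L^\infty(X)}\le d-\varepsilon<d$ and hence is admissible in the supremum defining $\rho_{\mathcal{N},0}(d)$; consequently, if $d\notin\mathcal{D}_{\mathcal{N}}$, i.e. $\rho_{\mathcal{N}}$ is left-continuous at $d$, then $\rho_{\mathcal{N},0}(d)=\rho_{\mathcal{N}}(d)$. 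Combining this with the lower bound of the previous paragraph and the upper bound of the second paragraph gives $\dim_{loc}(\mathcal{N}_d:\mathbb{R})=8\rho_{\mathcal{N}}(d)$ for all $d\in(\sqrt3,+\infty)\setminus\mathcal{D}_{\mathcal{N}}$. I do not expect a genuine obstacle here: the only subtlety is the replacement of the threshold $1$ by $\sqrt3$, which propagates from the Euclidean (as opposed to operator) norm of the BPST instanton through the gluing construction into the hypothesis $d>\sqrt3$.
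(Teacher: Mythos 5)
Your proposal is correct and is essentially the paper's own argument: the paper disposes of this theorem with the single remark that ``the proof is the same as the proof of Theorem \ref{thm: main theorem}'', and what you have written is exactly that proof transcribed to $\mathcal{N}_d$, with the same three ingredients (the appendix upper bound, the two appendix analogues of Theorems \ref{thm: local mean dimension around non-degenerate ASD connection} and \ref{thm: abundance of non-degenerate ASD connections}) and the same left-continuity bookkeeping via $\rho_{\mathcal{N},0}$. The only structural simplification you make --- omitting the $d\le 1$ case split --- is harmless because the hypothesis here already restricts to $d>\sqrt{3}$.
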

So if $d>\sqrt{3}$ we have a good understanding of the local mean dimension of $\mathcal{N}_d$.
For $d<1$,
$\mathcal{N}_d=\moduli_d = \{[\text{flat connection}]\}$ is the one-point space
(Example \ref{example: BPST instanton}).
The remaining problem is the case of $1\leq d\leq \sqrt{3}$.
We don't have any good information of this range.

The main good property of the operator norm $\norm{F_A}_{\mathrm{op}}$
is our knowledge of the sharp threshold value described in Example \ref{example: BPST instanton}.

\vspace{10mm}

\address{ Shinichiroh Matsuo \endgraf
Department of Mathematics, Osaka University,
 Toyonaka, Osaka 560-0043, Japan}

\textit{E-mail address}: \texttt{matsuo@math.sci.osaka-u.ac.jp}

\vspace{0.5cm}

\address{ Masaki Tsukamoto \endgraf
Department of Mathematics, Kyoto University, Kyoto 606-8502, Japan}

\textit{E-mail address}: \texttt{tukamoto@math.kyoto-u.ac.jp}

\end{document}